\newtheorem{theorem}{Theorem}[section]
\newtheorem{lemma}[theorem]{Lemma}
\newtheorem{proposition}[theorem]{Proposition}
\newtheorem{corollary}[theorem]{Corollary}
\newtheorem{definition}[theorem]{Definition}
\newtheorem{definitions}[theorem]{Definitions}
\theoremstyle{definition}
\newtheorem{example}[theorem]{Example}
\newtheorem{remark}[theorem]{Remark}
\newcommand{\trinorm}{\vert\hspace{-0.5mm}\vert\hspace{-0.5mm}\vert}
\begin{document}
	\title[]{Dual of an extended locally convex space}
	\author{Akshay Kumar \and Varun Jindal}
	\address{Akshay Kumar: Department of Mathematics, Malaviya National Institute of Technology Jaipur, Jaipur-302017, Rajasthan, India}
	\email{akshayjkm01@gmail.com}
	
	\address{Varun Jindal: Department of Mathematics, Malaviya National Institute of Technology Jaipur, Jaipur-302017, Rajasthan, India}
	\email{vjindal.maths@mnit.ac.in}

	\subjclass[2010]{Primary 46A03, 46A20; Secondary 46A17, 46A99, 46B10}	
\keywords{ Extended locally convex space, extended normed space, finest locally convex topology, weak topolgy, weak$^*$ topology, topology of uniform convergence on bounded sets}		
\maketitle

\begin{abstract}
	This paper aims to study the dual of an extended locally convex space. In particular, we study the weak and weak$^*$ topologies as well as the topology of uniform convergence on bounded subsets of an extended locally convex space. As an application to function spaces, we show that the weak topology for the space $C(X)$ of all real-valued continuous functions on a metric space $(X,d)$ endowed with the topology of strong uniform convergence on bornology coincides with its finest locally convex topology if and only if the bornology is of all finite subsets of $X$.    \end{abstract}

\section{Introduction}
In \cite{nwiv}, Beer introduced the concept of an extended normed linear space (enls, for short). These spaces were further studied by Beer and Vanderwerff in \cite{socsiens, spoens}. Extending the idea of Beer \cite{nwiv}, Salas and Tapia-Garc{\'\i}a \cite{esaetvs} defined extended locally convex space (elcs, for short). These new extended spaces are different from the classical locally convex spaces in that the scalar multiplication in these spaces need not be jointly continuous. So the classical theory of locally convex spaces may not directly apply to these new kinds of spaces. To address this issue,  in  \cite{flctopology}, authors studied the finest locally convex topology (flc topology, for short) for an elcs $(X, \tau)$ which is coarser than $\tau$.  There it is also shown that if $\tau_F$ is the flc topology for an elcs $(X, \tau)$, then both $(X, \tau)$ and $(X, \tau_F)$ have the same dual $X^*$ (the collection of all continuous linear functionals).

In the present paper, we explore the use of the flc topology $\tau_F$ for an elcs $(X, \tau)$ to study its dual $X^*$. In particular, we study the weak topology on $X$, and the weak$^*$ topology on the dual $X^*$ of $X$. On $X^*$, we also study the topology $\tau_{ucb}$ of uniform convergence on bounded subsets of an elcs $(X, \tau)$. The topology $\tau_{ucb}$ for extended normed spaces was first considered by Beer and Vanderwerff in \cite{spoens}.

After giving some preliminary definitions and notations in the second section, we study the weak and weak$^*$ topologies for an elcs in the third section. In particular, we study the metrizability and normability of these topologies. To investigate the metrizability of the weak topology for an elcs $(X,\tau)$, we first examine when the flc topology coincides with its weak topology. Further, we show that the weak$^*$ topology on the  closed unit ball in the dual of an extended normed space $(X,\parallel\cdot\parallel)$ is metrizable if and only if $(X, \tau_F)$ is separable, where $\tau_F$ is the corresponding flc topology. As an application to function spaces, we show that if $\mathcal{B}$ is a bornology on a metric space $(X,d)$ with a closed base, then the flc topology and the weak topology for the space $C(X)$ of all real-valued continuous functions equipped with the topology $\tau_{\mathcal{B}}^s$ of strong uniform convergence on $\mathcal{B}$ coincide if and only if $\mathcal{B}$ is the bornology of all finite subsets of $X$.

In the fourth section, we examine equicontinuous and weak$^*$ compact subsets of the dual $X^*$ of an elcs  $(X, \tau)$, and show that the collection of all equicontinuous subsets of $X^*$ actually induces $\tau$. We also verify the classical Banach Alaoglu Theorem, James Compactness Criterion, Eberlin-\v{S}mulian Theorem in the extended case.  

In the last section, we study the topology $\tau_{ucb}$ of uniform convergence on bounded sets of an elcs $(X, \tau)$. We first examine the metrizability and normability of the space $(X^*, \tau_{ucb})$. Then we investigate the following interesting problem. For an elcs $(X, \tau)$ with the flc topology $\tau_F$, the spaces $(X,\tau)$ and $(X, \tau_F)$ may have different bounded sets (see, Proposition \ref{both tau and tauF may not have same bounded sets}), it will be interesting to know that when  $\tau_{ucb}= \tau_s$ on $X^*$, where $\tau_s$ is the strong topology of the locally convex space $(X,\tau_F)$, that is, the topology of uniform convergence on bounded subsets of $(X, \tau_F)$. We show that this is true for a fundamental extended locally convex space. We also give conditions on an elcs so that $\tau_{ucb}=\tau_s$. 

\section{Preliminaries}

Every vector space is assumed to be over the field $\mathbb{K}$, which is either $\mathbb{R}$ or $\mathbb{C}$. We adopt the following conventions for the $\infty$ value: $\infty.0=0.\infty=0$; $\infty+\alpha=\alpha+\infty=\infty$ for every $\alpha\in\mathbb{R}$; $\infty.\alpha=\alpha.\infty=\infty$ for $\alpha>0$; $\inf\{\emptyset\}=\infty$.  

An \textit{extended norm} $\parallel\cdot\parallel:X\rightarrow [0,\infty]$ on a vector space $X$ is a function  with the following properties: \begin{itemize}
	\item[(1)] $\parallel x\parallel=0$ if and only if $x=0$;
	\item[(2)] $\parallel \alpha x\parallel = |\alpha |\parallel x\parallel$ for each scalar  $\alpha$ and $x\in X$;
	\item[(3)] $\parallel x+y\parallel~\leq~ \parallel x\parallel+\parallel y\parallel$ for $x,y\in X$.\end{itemize}

An \textit{extended normed linear space} (enls, for short) is a vector space $X$ endowed with an extended norm $\parallel \cdot\parallel$, denoted by $(X,\parallel \cdot\parallel)$ (or $X$). An extended Banach space is an enls $(X,\parallel\cdot\parallel)$ that is complete with respect to the metric $d(x, y)=\min\{1, \parallel x-y\parallel\}$ for $x, y\in X$. For more details on enls and extended metric spaces, we refer to \cite{tsoervms, nwiv, socsiens, spoens}.
 
A generalization of an extended norm is an extended seminorm. 
  
\begin{definition}{\normalfont(\cite{esaetvs})} \normalfont	A function $\rho:X\rightarrow [0, \infty]$ on a vector space $X$ is said to be an \textit{extended seminorm} if it satisfies: \begin{itemize}
		\item[(1)] $\rho(\alpha x)=|\alpha|\rho(x)$ for each scalar $\alpha$ and $x\in X$; 
		\item[(2)] $\rho(x+y)\leq \rho(x)+\rho(y)$ for $x,y\in X$.\end{itemize}\end{definition}
	 
\noindent A vector space $X$ equipped with a group topology $\tau$ is said to be an \textit{extended locally convex space} (elcs, for short) if $\tau$ is induced by a family $\mathcal{P}=\{\rho_i : i\in I \}$ of extended seminorms on $X$. If the family $\mathcal{P}$ is countable, then we say $(X,\tau)$ is a \textit{countable elcs}. We denote by $S(X, \tau)$ the collection of all continuous extended seminorm on an elcs $(X, \tau)$. For $\rho\in S(X, \tau)$, we define $X_{fin}^\rho=\{x\in X:\rho(x)<\infty\}$. The \textit{finite subspace} of an elcs $(X, \tau)$ is given by $$X_{fin}=\{x\in X: \rho(x)<\infty ~\text{for every}~ \rho\in S(X, \tau)\}.$$ In particular, if $(X,\parallel\cdot\parallel)$ is an enls, then $X_{fin} = \{x \in X: \parallel x \parallel < \infty\}$.

If there exists a $\rho\in S(X, \tau)$ such that $X_{fin}=X_{fin}^\rho$ in an elcs $(X, \tau)$, then we say $(X, \tau)$ is a \textit{fundamental extended locally convex space} (fundamental elcs, for short). The following facts about an elcs $(X,\tau)$ are either easy to prove or given in \cite{esaetvs}.
\begin{itemize}
	\item[(1)] $(X_{fin}, \tau|_{X_{fin}})$ is a locally convex space, and it is the largest such subspace of $X$.
	\item[(2)]  If $\rho$ is a continuous extended seminorm on $X$, then $X_{fin}^\rho$ is a clopen in $X$.
    \item[(3)]  $X_{fin}$ is an open subspace of $X$ if and only if $X$ is  a fundamental elcs (Corollary 3.11 in \cite{esaetvs}). 
	\item[(4)]  If $\tau$ is induced by a collection $\mathcal{P}$ of  extended seminorms on $X$, then $$X_{fin}=\bigcap_{\rho\in\mathcal{P}} X_{fin}^\rho.$$
   \item[(5)] There exists a neighborhood base $\mathfrak{B}$ of $0$ consisting of absolutely convex (balanced and convex) sets.
\end{itemize} 

Suppose $\mathfrak{B}$ is a neighborhood base of $0$ in an elcs $(X,\tau)$ consisting of absolutely convex sets. Then $\tau$ is induced by the collection $\{\mu_{V}:V\in \mathfrak{B}\}$ of Minkowski functionals (see, Proposition 4.7 in \cite{esaetvs}).

\begin{definition}\label{Minkowski funcitonal} {\normalfont(\cite{esaetvs})} \normalfont	Suppose $(X, \tau)$ is an elcs and $U$ is an absolutely convex subset of  $X$. Then the \textit{Minkowski functional} $\mu_U:X\rightarrow[0,\infty]$  for $U$  is defined as  $$\mu_U(x)=\inf\{\lambda>0: x\in \lambda U\}.$$\end{definition}

\noindent For an absolutely convex subset $U$ of an elcs $X$, we define  $X_{fin}^U=\{x\in X: \mu_U(x)<\infty\}$. The following facts are easy to prove using Definition \ref{Minkowski funcitonal}. 
\begin{itemize}
	\item[(1)] Every Minkowski functional $\mu_{U}$ is an extended seminorm. In addition, suppose $0\in \text{Core}(U)$, that is, for  every $x\in X$, there exists a $\delta_x>0$ such that $tx\in A$ for every $0\leq t\leq \delta_x.$ Then $\mu_U$ is a seminorm on $X$.
	\item[(2)] The Minkowski functional $\mu_U$ is continuous if and only if   $U$ is a neighborhood of $0$. 
	\item[(3)]  If $U\subseteq V$, then $\mu_V\leq \mu_U$. 
	\item[(4)]  In general, $\{x\in X:\mu_U(x)<1\}\subseteq U\subseteq \{x\in X:\mu_U(x)\leq 1\}.$ \end{itemize}

For any nonempty subset $A$ of a vector space $X$, we denote the absolute convex hull of $A$ in $X$ by ab$(A)$, that is, ab$(A)$ is the smallest absolutely convex set containing $A$. If $A$ is any set in a  topological space $(X,\tau)$, we denote the closure and interior of $A$ in $(X,\tau)$ by $\text{Cl}_\tau(A)$ and  $\text{int}_\tau(A)$, respectively. For other terms and definitions, we refer to \cite{lcsosborne, tvsschaefer, willard}. 

\section{Weak and Weak$^*$ Topologies}

This section aims to study some structural properties of the weak and weak$^*$ topologies for an elcs. In particular, we study the metrizability and normability of these topologies in relation to the properties of an elcs. We also show that for an enls $(X,\parallel\cdot \parallel)$, the weak$^*$ topology on the closed unit ball $B_{X^*}$ of the dual $X^*$ is metrizable if and only if $X$ endowed with the flc topology $\tau_F$ is separable.

\begin{definitions}\label{weak and weak^* topology}
\normalfont	Let $(X,\tau)$ be an elcs with dual $X^*$. The \textit{weak topology} $\tau_w$ on $X$ is induced by the collection $\mathcal{B}_w=\{\rho_f :f\in X^*\}$ of seminorms, where $\rho_f(x)=|f(x)|$ for all $x \in X$. 

We define the \textit{weak$^*$ topology} $\tau_{w^*}$ on $X^*$ as the topology induced by the collection $\mathcal{B}_{w^*}=\{\rho_x :x\in X\}$ of seminorms, where $\rho_x(f)=|f(x)|$ for all $f \in X^*$.\end{definitions}

\begin{remark}\label{finite space have same weak  and weak$^*$ topology}
\begin{enumerate}
	\item If $\tau_{F}$ is the flc topology for an elcs $(X, \tau)$, then $(X,\tau_{F})^*=(X,\tau)^*$.  So the weak topology $\tau_w$, and the weak$^*$ topology $\tau_{w^*}$ are same as the weak and weak$^*$ topologies for the locally convex space $(X, \tau_F)$. Hence we may use most of the classical results about the weak and weak$^*$ topologies of a locally convex space for the topologies $\tau_w$ and $\tau_{w^*}$. 
	\item The collection $\mathcal{B}_w=\{B^\circ: B ~\text{is a finite subset of}~ X^*\}$ is a neighborhood base of $0$ in $(X, \tau_w)$, where $B^\circ=\{f\in X^*: |f(x)|\leq 1 ~\text{for}~ x\in B\}$. Similarly, the collection $\mathcal{B}_{w^*}=\{B_\circ: B ~\text{is a finite subset of}~ X^*\}$ is a neighborhood base of $0$ in $(X, \tau_{w^*})$, where $B_\circ=\{x\in X: |f(x)|\leq 1 ~\text{for}~ f\in B\}$.  
\end{enumerate}

\end{remark}      
For more details about weak and weak$^*$ topologies for a locally convex space, we refer to  \cite{tvsnarici, lcsosborne, tvsschaefer}.

It is well known that  a locally convex space  is normable if and only if it has a bounded neighborhood of $0$ (Theorem 6.2.1 in \cite{tvsnarici}, p. 160), and a neighborhood of $0$ in the weak topology of a normed space $X$ is bounded if and only if the dimension of $X$ is finite (Proposition 3.10 in \cite{faaidg}, p. 67). So the weak topology of a locally convex space is normable if and only if it is finite dimensional. Consequently, by Remark \ref{finite space have same weak  and weak$^*$ topology}, we have the following result for an elcs.
\begin{proposition}\label{normability of the weak topology}
	Let $(X, \tau)$ be an elcs. Then the weak topology for  $(X, \tau)$ is normable if and only if the dimension of $X$ is finite.\end{proposition}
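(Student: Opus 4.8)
The plan is to reduce the statement entirely to the classical locally convex case via Remark \ref{finite space have same weak and weak$^*$ topology}, which tells us that the weak topology $\tau_w$ on $X$ coincides with the weak topology of the locally convex space $(X, \tau_F)$. Since normability is a property of the topology alone, $(X,\tau_w)$ is normable if and only if $(X,\tau_F)$ equipped with its weak topology is normable. So the whole question is moved into the realm of ordinary locally convex spaces, where the cited results apply.

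First I would recall the characterization of normability: a locally convex (indeed any) topological vector space is normable if and only if it has a bounded neighborhood of $0$ (Theorem 6.2.1 in \cite{tvsnarici}). Next, I would invoke the fact (Proposition 3.10 in \cite{faaidg}) that in the weak topology of an infinite-dimensional normed space, no neighborhood of $0$ is bounded; more precisely, every weak neighborhood of $0$ contains a nontrivial subspace, hence is unbounded whenever the space is infinite-dimensional. The mild subtlety is that $(X,\tau_F)$ is a general locally convex space rather than a normed space, so I would first note that if $(X,\tau_w)$ is normable then in particular $(X,\tau_F)$ carries a normable weak topology; one then argues that a locally convex space whose weak topology is normable must be finite-dimensional — the standard route is that the weak topology being normable forces it to have a bounded neighborhood $U$ of $0$, and since $U$ absorbs points and the weak topology's basic neighborhoods are determined by finitely many functionals, $U$ contains the kernel of finitely many elements of $X^*$, a subspace of finite codimension; boundedness of $U$ then forces that subspace to be $\{0\}$, so $X^*$ (and hence $X$, by Hausdorffness/separation via $X^*$) is finite-dimensional.

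For the converse, if $\dim X = n < \infty$, then on a finite-dimensional Hausdorff topological vector space there is a unique vector topology, so $\tau_w$, $\tau_F$, and $\tau$ all coincide with the Euclidean topology on $\mathbb{K}^n$, which is certainly normable. This direction is immediate and needs no real argument beyond citing uniqueness of the topology on finite-dimensional Hausdorff TVS.

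The main obstacle — really the only point requiring care — is the first direction: translating "weak topology normable" into "finite-dimensional" for a general locally convex $(X,\tau_F)$ rather than a normed space, since the cited Proposition 3.10 of \cite{faaidg} is phrased for normed spaces. I expect this to be handled cleanly by the bounded-neighborhood argument sketched above: a bounded weak neighborhood of $0$ must contain a weakly closed subspace of finite codimension, and boundedness forces that subspace to be trivial. Everything else is a direct appeal to Remark \ref{finite space have same weak and weak$^*$ topology} and the two quoted classical theorems, exactly as the paragraph preceding the proposition already indicates.
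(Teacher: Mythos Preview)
Your proposal is correct and follows essentially the same route as the paper: reduce via Remark \ref{finite space have same weak  and weak$^*$ topology} to the weak topology of the locally convex space $(X,\tau_F)$, then invoke the normability criterion (bounded neighborhood of $0$) together with the fact that weak neighborhoods of $0$ are bounded only in finite dimensions. The paper simply asserts the passage from the normed-space statement (Proposition 3.10 in \cite{faaidg}) to general locally convex spaces, whereas you supply the standard argument (a weak neighborhood contains a finite-codimension kernel, which boundedness forces to be $\{0\}$); this extra care is welcome but does not constitute a different approach.
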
  

In the next result,  we examine when $\tau_{F} =\tau_{w}$ for an elcs $(X,\tau)$? 
\begin{proposition}\label{coinicidence of finest and weak topology}
	Let $(X, \tau)$ be an elcs with the flc topology $\tau_{F}$ and weak topology $\tau_w$. If $\tau_{F}=\tau_w$, then the dimension of the quotient space $X/X_{fin}^\rho$ is finite for every continuous extended seminorm $\rho$ on $(X,\tau)$. \end{proposition}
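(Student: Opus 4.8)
The plan is to argue by contradiction: suppose $\tau_F = \tau_w$ but there is a continuous extended seminorm $\rho$ on $(X,\tau)$ such that $X/X_{fin}^\rho$ is infinite-dimensional. Since $\rho$ is continuous, $X_{fin}^\rho$ is clopen in $X$ (fact (2) about continuous extended seminorms), hence clopen in $(X,\tau_F)$ as well because $\tau_F \subseteq \tau$ and... wait, we need $X_{fin}^\rho$ to remain a neighborhood of $0$ in the \emph{coarser} topology $\tau_F$. This is where $\tau_F = \tau_w$ must be used: I would show that under this hypothesis $X_{fin}^\rho$ is a $\tau_w$-neighborhood of $0$, and then derive that this forces $X/X_{fin}^\rho$ to be finite-dimensional, contradicting the assumption.

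**The key steps.**

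\begin{enumerate}
\item First I would recall that since $\tau_F = \tau_w$ and $\rho \in S(X,\tau)$, the set $V = \{x : \rho(x) < 1\}$ (or $X_{fin}^\rho$ itself) contains a basic $\tau_w$-neighborhood of $0$, i.e. a polar $B_\circ = \{x : |f_i(x)| \le 1,\ i = 1,\dots,n\}$ for finitely many $f_1,\dots,f_n \in X^*$. In particular $X_{fin}^\rho \supseteq \bigcap_{i=1}^n \ker f_i$.
\item Next, $\bigcap_{i=1}^n \ker f_i$ is a linear subspace of finite codimension in $X$ (codimension at most $n$), since it is the kernel of the linear map $x \mapsto (f_1(x),\dots,f_n(x)) \in \mathbb{K}^n$.
\item Since $X_{fin}^\rho$ is a linear subspace containing $\bigcap_{i=1}^n \ker f_i$, it too has finite codimension in $X$; that is, $\dim(X/X_{fin}^\rho) \le n < \infty$, contradicting the hypothesis. (Here I use that $X_{fin}^\rho$ is genuinely a subspace: $\rho(\alpha x) = |\alpha|\rho(x)$ and $\rho(x+y) \le \rho(x)+\rho(y)$ give that $\rho(x),\rho(y) < \infty \Rightarrow \rho(x+y) < \infty$ and $\rho(\alpha x) < \infty$.)
\end{enumerate}

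**The main obstacle.**

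The delicate point is Step 1 — justifying that $\tau_F = \tau_w$ actually makes $X_{fin}^\rho$ (equivalently, the $\rho$-ball) a $\tau_w$-neighborhood of $0$. The subtlety is that $\mu := \rho|_{X_{fin}}$ is continuous on $(X_{fin}, \tau|_{X_{fin}})$, and $\tau_F$ restricted to $X_{fin}$ agrees with the locally convex topology there; but one must check that the $\rho$-ball, which may fail to be $\tau_F$-open a priori, becomes a $\tau_F$-neighborhood precisely because $\tau_F$ coincides with $\tau_w$ and $X_{fin}^\rho$ is $\tau$-clopen. I would handle this by noting that $X_{fin}^\rho$ is $\tau$-open, hence (being a subspace) its being a $\tau_F$-neighborhood of $0$ is automatic once we know $\tau_F$ and $\tau$ have the same closed (equivalently open) subspaces of finite codimension — or, more directly, by observing that the canonical quotient map $q\colon X \to X/X_{fin}^\rho$ is $\tau$-continuous into a discrete (hence finest locally convex) space, so $q$ is $\tau_F$-continuous, so $\ker q = X_{fin}^\rho$ is $\tau_F = \tau_w$ open, which puts us in position to apply Steps 1–3. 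Everything after this reduction is routine linear algebra.
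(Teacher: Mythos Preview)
Your proposed argument contains a fatal gap at the point you yourself flag as ``the main obstacle'': Step~1 is simply false. The set $X_{fin}^\rho$ (or the $\rho$-ball $V=\{x:\rho(x)<1\}\subseteq X_{fin}^\rho$) can \emph{never} be a $\tau_F$-neighborhood of $0$ unless $X_{fin}^\rho=X$. The reason is that $(X,\tau_F)$ is a genuine locally convex space, so every neighborhood of $0$ is absorbing; a proper linear subspace is never absorbing. Your proposed justification via the quotient map fails because the discrete topology on a nonzero vector space is \emph{not} a vector-space topology (scalar multiplication is discontinuous: $\tfrac{1}{n}x\not\to 0$ for $x\neq 0$), let alone a locally convex one. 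Hence the universal property of $\tau_F$ --- that every $\tau$-continuous linear map into a locally convex space is $\tau_F$-continuous --- does not apply, and you cannot conclude that $q$ is $\tau_F$-continuous with open kernel. A concrete instance: take $X=\mathbb{K}^2$ with the extended norm $\|(x,y)\|=|x|$ if $y=0$ and $\infty$ otherwise; here $\tau_F=\tau_w$ is the usual topology on $\mathbb{K}^2$, yet $X_{fin}=\mathbb{K}\times\{0\}$ is a line, hence certainly not $\tau_F$-open.

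The paper's proof takes an entirely different route. It invokes a fact from the theory of topological abelian groups: in a locally convex space carrying its weak topology, every discrete subgroup is finitely generated. Assuming $X/X_{fin}^\rho$ infinite-dimensional, one picks an independent sequence $(z_n)$ in a complement $M$ of $X_{fin}^\rho$ and shows --- by constructing an explicit seminorm dominated by $\rho$, hence $\tau$-continuous, hence $\tau_F$-continuous --- that the subgroup generated by the $z_n$ is discrete in $(X,\tau_F)$, a contradiction. The hypothesis $\tau_F=\tau_w$ is thus used globally, through this group-theoretic constraint, rather than by attempting to force a particular $\tau$-open set to remain $\tau_F$-open.
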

\begin{proof} Let $X=X_{fin}^\rho\oplus M$ for some infinite dimensional subspace $M$ and continuous extended seminorm $\rho$. So there exists an infinite linearly independent subset $A=\{z_n:n\in\mathbb{N}\}$ of $M$. Since $\tau_{F}=\tau_w$, every discrete subgroup of $(X, \tau_{F})$ must be finitely generated (\cite{Atgcfwt}). If $Z$ be the subgroup of $X$ generated by $A$, then $Z$ cannot be finitely generated. To get a contradiction, we show that $Z$ is a discrete subgroup of $(X, \tau_{F})$. Let $X= Y\oplus ~\text{span}(A)$, for a subspace $Y$ of $X$. Clearly, $X_{fin}^\rho\subseteq Y$. Consider the seminorm $\mu:X\rightarrow [0,\infty)$ defined by
$$\mu(x)=\sum_{n\in\mathbb{N}}|\alpha_n|,$$
where $x=x_1+x_2$ for $x_1\in Y$ and  $x_2 = \sum_{n\in\mathbb{N}}\alpha_nz_n\in \text{span}(A)$. Since $\mu(x) \leq \rho(x)$ for $x\in X$, $\mu$ is a continuous seminorm on $(X, \tau)$. By Theorem 3.5 in \cite{flctopology}, $\mu$ is  continuous  on $(X, \tau_{F})$. If $\mu(z)<1$ for $z\in Z$, then there exists integers $c_{n_1}, c_{n_2}, ..., c_{n_N}$ such that $z=\sum_{k=1}^{N}c_{n_k}z_{n_k}$  and $\mu(z)=\sum_{k=1}^{N}|c_{n_k}|<1$. So $c_{n_k}=0$ for all $k$. Consequently, $z=0$. Therefore $\{0\}= \mu^{-1}[0, 1)\cap Z$ is open in $(Z, \tau_{F}|_Z)$. Hence $Z$  is a discrete subgroup of $(X, \tau_{F})$.\end{proof}

\begin{theorem}\label{metrizability of weak topology in elcs}Let $(X, \tau)$ be an elcs with the flc topology $\tau_{F}$ and weak topology $\tau_w$. If $\tau_{w}$ is metrizable, then $\tau_{F}=\tau_w$. Converse holds for countable elcs.  \end{theorem}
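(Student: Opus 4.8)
The plan is to deduce both directions from the Mackey--Arens description of the locally convex space $(X,\tau_F)$. Write $X^*$ for the common dual of $(X,\tau)$ and $(X,\tau_F)$, and recall that $\tau_w=\sigma(X,X^*)$ is exactly the weak topology of the locally convex space $(X,\tau_F)$. Two inclusions, valid with no hypotheses, form the skeleton of the argument: $\tau_w\subseteq\tau_F$, because every $f\in X^*=(X,\tau_F)^*$ is $\tau_F$-continuous and $\tau_w$ is by definition the coarsest topology making all such $f$ continuous; and $\tau_F\subseteq\tau(X,X^*)$, the Mackey topology, because $(X,\tau_F)$ is a locally convex topology with dual $X^*$ and $\tau(X,X^*)$ is the finest such by Mackey--Arens. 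Hence, for the forward implication, it suffices to prove that metrizability of $\tau_w$ forces $\tau(X,X^*)=\sigma(X,X^*)$; then $\tau_F=\tau_w$ is squeezed out of the two inclusions.

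For that forward implication, assume $\tau_w$ is metrizable, hence first countable, and fix a countable base $\{V_m:m\in\mathbb{N}\}$ of absolutely convex neighborhoods of $0$; each $V_m$ contains a basic weak neighborhood determined by a finite set $F_m\subseteq X^*$, so $D:=\bigcup_m F_m$ is countable. First I would check that $X^*=\operatorname{span}(D)$: for $g\in X^*$ the set $\{x:|g(x)|\leq 1\}$ is a weak neighborhood of $0$, hence contains some $V_m$ and therefore $\bigcap_{h\in F_m}\ker h$; since $\bigcap_{h\in F_m}\ker h\subseteq\ker g$, the standard linear-algebra lemma gives $g\in\operatorname{span}(F_m)\subseteq\operatorname{span}(D)$. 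Thus $X^*$ has a countable Hamel basis, so for any $K\subseteq X^*$ the space $\operatorname{span}(K)$ is the union of an increasing chain $(L_n)$ of finite-dimensional subspaces. If $K$ is $\sigma(X^*,X)$-compact and absolutely convex, then $K=\bigcup_n(K\cap L_n)$ writes the compact Hausdorff (hence Baire) space $K$ as a countable union of closed sets, so some $K\cap L_N$ has nonempty relative interior, and a short convexity argument---slide along the segments joining an interior point to an arbitrary point of $K$---upgrades this to $K\subseteq L_N$. Hence every $\sigma(X^*,X)$-compact absolutely convex subset of $X^*$ is finite-dimensional, so, being bounded inside a finite-dimensional subspace, it is absorbed by the absolutely convex hull of a finite subset of itself; therefore uniform convergence on it is dominated by uniform convergence on a finite set, giving $\tau(X,X^*)=\sigma(X,X^*)=\tau_w$ and so $\tau_F=\tau_w$.

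For the converse, let $(X,\tau)$ be a countable elcs with $\tau$ induced by $\{\rho_n:n\in\mathbb{N}\}$, which after replacing $\rho_n$ by $\max(\rho_1,\dots,\rho_n)$ we may take increasing, and suppose $\tau_F=\tau_w$. By Proposition~\ref{coinicidence of finest and weak topology}, $X/X_{fin}^{\rho_n}$ is finite-dimensional for every $n$; fix a finite-dimensional complement $E_n$ with projection $\pi_n:X\to X_{fin}^{\rho_n}$, a norm $\|\cdot\|_n$ on $E_n$, and put $q_n:=\|(I-\pi_n)(\cdot)\|_n$, a seminorm on $X$. As $X_{fin}^{\rho_n}$ is clopen it is a $\tau$-neighborhood of $0$ contained in $\{q_n<1\}$, so $q_n$ is $\tau$-continuous. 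The crucial claim is that $\tau_F$ is generated by the countable family $\{\rho_n,q_n:n\in\mathbb{N}\}$. To see it, let $q$ be any $\tau$-continuous finite seminorm; then $\{q<1\}\supseteq\{x:\rho_n(x)<\delta\}$ for some $n,\delta$, which by a homogeneity argument gives $q\leq\tfrac1\delta\rho_n$ on $X_{fin}^{\rho_n}$, and decomposing $x=\pi_n(x)+(I-\pi_n)(x)$ and using that every seminorm on the finite-dimensional space $E_n$ is dominated by a multiple of $\|\cdot\|_n$, a routine estimate yields $q\leq\tfrac1\delta\rho_n+Cq_n$ on $X$ for a suitable constant $C$. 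Since a locally convex topology is determined by its continuous seminorms, and the $\tau_F$-continuous finite seminorms coincide with the $\tau$-continuous finite seminorms (one inclusion from $\tau_F\subseteq\tau$, the other from Theorem~3.5 in \cite{flctopology}), the claim follows. Therefore $(X,\tau_F)$ has a countable neighborhood base at $0$ and so is metrizable (a Hausdorff first-countable topological group), and as $\tau_w=\tau_F$ this makes $\tau_w$ metrizable.

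I expect the genuine work to lie in the middle of the forward direction: converting ``$X^*$ has a countable Hamel basis'' into ``$\tau(X,X^*)=\sigma(X,X^*)$''. One must run the Baire-category argument on the weak$^*$-compact absolutely convex set $K$, promote the resulting local finite-dimensionality to $K\subseteq L_N$ by convexity, and then check that a bounded subset of a finite-dimensional subspace is absorbed by the absolutely convex hull of finitely many of its points, so that uniform convergence on $K$ collapses to uniform convergence on a finite set. In the converse, the only delicate point is realizing that bounding the Hamel dimension of $X^*$ directly is hopeless---the $\rho_n$-bounded functionals on $X_{fin}^{\rho_n}$ may already form an infinite-dimensional space---so that one must instead manufacture a countable generating family of seminorms for $\tau_F$ itself, for which the finite codimension supplied by Proposition~\ref{coinicidence of finest and weak topology} together with the clopenness of $X_{fin}^{\rho_n}$ are exactly what is needed.
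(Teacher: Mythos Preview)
Your forward direction is correct but more elaborate than the paper's. The paper simply invokes the standard fact that every metrizable locally convex space is a Mackey space, so that $\tau_w=\tau(X,X^*)$, and then the chain $\tau_w\subseteq\tau_F\subseteq\tau(X,X^*)$ forces equality. You instead establish $\tau(X,X^*)=\sigma(X,X^*)$ from scratch, by showing that $X^*$ has countable Hamel dimension and then running a Baire-category argument on absolutely convex weak$^*$-compact sets to push each of them into a finite-dimensional subspace. This is a legitimate self-contained substitute for the citation, and amounts to a hands-on proof of ``metrizable $\Rightarrow$ Mackey'' in the particular case of a weak topology; the paper's route is shorter, yours avoids an external reference.

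Your converse, however, contains a real slip. You assert that $\tau_F$ is generated by the countable family $\{\rho_n,q_n:n\in\mathbb{N}\}$, but the $\rho_n$ are \emph{extended} seminorms; the family $\{\rho_n\}$ alone already generates $\tau$, which is strictly finer than $\tau_F$ whenever $(X,\tau)$ is not an ordinary locally convex space, so the claim as written is false. What your decomposition actually yields is $q(x)\leq\tfrac{1}{\delta}\,\rho_n(\pi_n(x))+Cq_n(x)$, not $q\leq\tfrac{1}{\delta}\rho_n+Cq_n$; the correct countable family of \emph{finite} seminorms is $\{p_n,q_n\}$ with $p_n:=\rho_n\circ\pi_n$, and one checks that each $p_n$ is $\tau$-continuous because $\{p_n<\epsilon\}\supseteq\{\rho_n<\epsilon\}$. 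With this repair your argument goes through and is essentially a direct proof of the implication the paper obtains by citing Theorem~5.10 of \cite{flctopology}.
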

\begin{proof}
	Suppose $\tau_w$ is metrizable. Then $(X, \tau_w)$ is a Mackey space (3.4 in \cite{tvsschaefer}, p. 132). Since $(X, \tau_{F})^*=(X, \tau_w)^*$, we have $\tau_w=\tau_{F}$ (Theorem 4.35 in \cite{faaidg}, p. 120). 
	
	Conversely, suppose $(X, \tau)$ is a countable elcs and $\tau_{F}=\tau_{w}$. By Proposition \ref{coinicidence of finest and weak topology}, the dimension of the quotient space $X/X_{fin}^\rho$ is finite for every continuous extended seminorm $\rho$. So by Theorem 5.10 in \cite{flctopology}, $\tau_{F}$ is metrizable. Consequently, $\tau_{w}$ is metrizable. \end{proof} 

Suppose $(X, \parallel\cdot\parallel)$ is an enls. Then the weak topology for the normed space $(X_{fin}, \parallel\cdot\parallel)$ coincides with the subspace topology $\tau_w|_{X_{fin}}$, where $\tau_w$ is the weak topology for $(X, \parallel\cdot\parallel)$. So if $\tau_w= \tau_F$ on $X$, then $\tau_{w}|_{X_{fin}}=\tau_F|_{X_{fin}}= \tau_{ \parallel\cdot\parallel}$ on $X_{fin}$. Therefore the dimension of finite subspace is finite as weak topology for the finite subspace is normable. The next theorem now follows from this discussion and all the above results of this section.
\begin{theorem}\label{metrizability of weak topology in enls} Let $(X,\parallel\cdot\parallel)$ be an enls. Suppose $\tau_{F}$ and $\tau_w$ are the flc topology and weak topology for $X$, respectively. Then the following statements are equivalent.
	\begin{itemize}
		\item[(1)] $\tau_{F}=\tau_w$;
		\item[(2)] $\tau_{w}$ is metrizable;
		\item[(3)] $\tau_w$ is normable;
		\item[(4)] $X$ is of finite dimensional.\end{itemize}\end{theorem}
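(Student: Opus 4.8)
The plan is to establish the cycle of implications $(4)\Rightarrow(3)\Rightarrow(2)\Rightarrow(1)\Rightarrow(4)$, drawing on the results already proved in this section together with the two cited classical facts about normed spaces (that the weak topology of a normed space is normable, equivalently metrizable, only in finite dimension). The implications $(3)\Rightarrow(2)$ is immediate since every normable topology is metrizable, and $(2)\Rightarrow(1)$ is exactly the first half of Theorem \ref{metrizability of weak topology in elcs} applied to the enls $(X,\parallel\cdot\parallel)$ viewed as an elcs induced by the single extended seminorm $\parallel\cdot\parallel$. The implication $(4)\Rightarrow(3)$ follows from Proposition \ref{normability of the weak topology}, or more elementarily from the observation that on a finite dimensional space all the topologies in sight collapse to the usual Euclidean one, which is certainly normable.

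The only implication requiring the extended-norm structure is $(1)\Rightarrow(4)$, and this is precisely the content of the paragraph immediately preceding the theorem statement, which I would simply spell out. First I would recall that for an enls, the subspace topology $\tau_w|_{X_{fin}}$ that the weak topology of $X$ induces on the finite subspace $X_{fin}$ agrees with the weak topology of the genuine normed space $(X_{fin},\parallel\cdot\parallel)$; this is because the restriction of any $f\in X^*$ to $X_{fin}$ gives a continuous linear functional on $(X_{fin},\parallel\cdot\parallel)$, and conversely every continuous linear functional on $(X_{fin},\parallel\cdot\parallel)$ extends (or at least the seminorms $\rho_f$ restricted to $X_{fin}$ generate the right topology). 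Next, assuming $\tau_F=\tau_w$ on $X$, restrict both sides to $X_{fin}$: since $\tau_F|_{X_{fin}}=\tau_{\parallel\cdot\parallel}$ (the flc topology restricted to the finite subspace is the norm topology), we obtain that the weak topology of the normed space $(X_{fin},\parallel\cdot\parallel)$ coincides with its norm topology. By the classical result (Proposition 3.10 in \cite{faaidg}) this forces $X_{fin}$ to be finite dimensional.

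It then remains to upgrade "$X_{fin}$ is finite dimensional" to "$X$ is finite dimensional." For this I would invoke Proposition \ref{coinicidence of finest and weak topology}: since $\tau_F=\tau_w$, the quotient $X/X_{fin}^\rho$ is finite dimensional for every continuous extended seminorm $\rho$ on $(X,\parallel\cdot\parallel)$. Applying this with $\rho=\parallel\cdot\parallel$ gives that $X/X_{fin}^{\parallel\cdot\parallel}=X/X_{fin}$ is finite dimensional; combined with $\dim X_{fin}<\infty$ from the previous step, we conclude $\dim X=\dim X_{fin}+\dim(X/X_{fin})<\infty$, which is $(4)$. Finally $(4)\Rightarrow(1)$ closes the loop: on a finite dimensional space the flc topology, the weak topology and the norm topology all coincide, since every linear functional is continuous and the space is linearly homeomorphic to $\mathbb{K}^n$.

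I expect the main (though still modest) obstacle to be the careful verification that $\tau_w|_{X_{fin}}$ really is the weak topology of $(X_{fin},\parallel\cdot\parallel)$ and that $\tau_F|_{X_{fin}}=\tau_{\parallel\cdot\parallel}$ — i.e., pinning down exactly which functionals and seminorms survive the restriction to the clopen subspace $X_{fin}$ — since everything else is either a direct citation or the trivial finite dimensional collapse. Once that identification is in hand, the argument is a short chain of the section's own propositions.
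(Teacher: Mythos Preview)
Your proposal is correct and follows essentially the same route as the paper: the implications $(4)\Rightarrow(3)\Rightarrow(2)\Rightarrow(1)$ are obtained from Proposition~\ref{normability of the weak topology} and Theorem~\ref{metrizability of weak topology in elcs}, while $(1)\Rightarrow(4)$ is exactly the restriction-to-$X_{fin}$ argument in the paragraph preceding the theorem combined with Proposition~\ref{coinicidence of finest and weak topology} applied to $\rho=\parallel\cdot\parallel$. You have simply made explicit the ingredients the paper gathers under ``this discussion and all the above results of this section,'' and your identification of the one genuine checkpoint---that $\tau_w|_{X_{fin}}$ is the weak topology of $(X_{fin},\parallel\cdot\parallel)$ and $\tau_F|_{X_{fin}}=\tau_{\parallel\cdot\parallel}$---is on target (for the latter, the seminorm $x=x_f+x_M\mapsto\parallel x_f\parallel$ is continuous on $(X,\tau)$, hence on $(X,\tau_F)$, and restricts to $\parallel\cdot\parallel$ on $X_{fin}$).
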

	
It is  known that the weak$^*$ topology for a locally convex space $X$ is metrizable (normable) if and only if the dimension of $X$ is countable (finite). So by Remark \ref{finite space have same weak  and weak$^*$ topology}, we have the following result.
\begin{theorem}\label{normability of weak^* topology}  
		Let $(X, \tau)$ be an elcs with the dual $X^*$ and weak$^*$ topology $\tau_{w^*}$. Then $\tau_{w^*}$ is metrizable (normable) if and only if the dimension of $X$ is countable (finite).\end{theorem}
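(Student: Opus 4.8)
The plan is to obtain this result as a direct consequence of Remark~\ref{finite space have same weak  and weak$^*$ topology} together with the corresponding classical fact for locally convex spaces. The point is that $(X,\tau_F)$ is a locally convex space whose underlying vector space is $X$ itself, that $(X,\tau_F)^*=X^*$, and that by Remark~\ref{finite space have same weak  and weak$^*$ topology} the topology $\tau_{w^*}$ coincides with the weak$^*$ topology $\sigma(X^*,X)$ of the locally convex space $(X,\tau_F)$. So I would quote the known criterion that, for a locally convex space $Y$, the weak$^*$ topology $\sigma(Y^*,Y)$ is metrizable if and only if $\dim Y\le\aleph_0$ and normable if and only if $\dim Y<\infty$ (see, e.g., \cite{tvsschaefer}), and apply it with $Y=(X,\tau_F)$, for which $\dim Y=\dim X$.

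For self-containedness I would record a short proof of that criterion in our notation. By Remark~\ref{finite space have same weak  and weak$^*$ topology}, the polars $B^\circ=\{f\in X^*:|f(x)|\le 1\text{ for all }x\in B\}$ of the finite sets $B\subseteq X$ form a neighborhood base at $0$ for $\tau_{w^*}$. The crucial elementary point is that, for a finite $B\subseteq X$, $x_0\in X$ and $\lambda>0$, the inclusion $B^\circ\subseteq\lambda\{x_0\}^\circ$ forces $x_0\in\operatorname{span}(B)$: otherwise, $\operatorname{span}(B)$ being finite-dimensional and hence closed in $(X,\tau_F)$, the Hahn--Banach theorem provides $f\in X^*$ vanishing on $\operatorname{span}(B)$ with $f(x_0)>\lambda$, which lies in $B^\circ$ but not in $\lambda\{x_0\}^\circ$, a contradiction. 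Granting this: (i) if $\dim X\le\aleph_0$, fixing a spanning sequence $(x_n)_{n\in\mathbb{N}}$ one checks that $\{\tfrac{1}{m}\{x_1,\dots,x_n\}^\circ:m,n\in\mathbb{N}\}$ is a countable neighborhood base at $0$, so $\tau_{w^*}$ is metrizable; conversely, from a countable neighborhood base at $0$, which may be taken to consist of polars $B_m^\circ$ of finite sets $B_m$, the crucial point gives $x_0\in\operatorname{span}(B_m)$ whenever $B_m^\circ\subseteq\{x_0\}^\circ$, so the countable set $\bigcup_m B_m$ spans $X$ and $\dim X\le\aleph_0$. (ii) $\tau_{w^*}$ is normable if and only if it has a bounded neighborhood of $0$; if $B^\circ$ with $B$ finite is bounded, then for each $x_0\in X$ it is absorbed by $\{x_0\}^\circ$, so $x_0\in\operatorname{span}(B)$ by the crucial point, whence $\operatorname{span}(B)=X$ and $\dim X<\infty$; conversely, if $\dim X=n$ with basis $x_1,\dots,x_n$, the map $f\mapsto(f(x_1),\dots,f(x_n))$ is a linear homeomorphism of $(X^*,\tau_{w^*})$ onto $\mathbb{K}^n$, so $\tau_{w^*}$ is normable.

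Since the statement is essentially a corollary, I expect no serious obstacle; the one thing that genuinely has to be checked is that the canonical pairing between $X$ and $X^*$ separates the points of $X$ --- equivalently, that $(X,\tau_F)$, hence $(X,\tau)$, is Hausdorff, which is part of the standing hypotheses on an elcs --- since this is exactly what makes the Hahn--Banach step available and makes $\dim X$ (rather than the dimension of $X$ modulo the common kernel of $X^*$) the right invariant.
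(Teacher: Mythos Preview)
Your approach is exactly that of the paper: the statement is presented there as an immediate consequence of Remark~\ref{finite space have same weak  and weak$^*$ topology} together with the classical fact that the weak$^*$ topology of a locally convex space $Y$ is metrizable (respectively normable) precisely when $\dim Y$ is countable (respectively finite). The paper simply quotes this fact without proof, whereas you additionally supply a short self-contained argument; your Hahn--Banach ``crucial point'' and the derivations of (i) and (ii) from it are correct, and your remark that the Hausdorff hypothesis on $(X,\tau_F)$ is what guarantees that $\dim X$ (rather than $\dim X/N$) is the relevant invariant is well taken.
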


Suppose $(X, \parallel\cdot\parallel)$ and $(Y, \parallel\cdot\parallel)$ are two extended normed linear spaces. Then for a continuous linear operator $T:X\to Y$ we define, $$\parallel T\parallel_{op}= \sup\{\parallel Tx\parallel: \parallel x\parallel\leq 1\}.$$
Note that if $T_1$ and $T_2$ are continuous linear operators on an enls $X$ such that $T_1|_{X_{fin}}=T_2|_{X_{fin}}$, then $\parallel T_1\parallel_{op}=\parallel T_2\parallel_{op}$. So $\parallel\cdot\parallel_{op}$ is a seminorm on $X^*$ (it become a norm if and only if $X=X_{fin}$). However, following Beer \cite{nwiv}, we call $\parallel T\parallel_{op}$ to be the operator norm of $T$. 

Recall that a normed linear space $X$ is separable if and only if the weak$^*$ topology on the closed unit ball of $X^*$ is metrizable. However, if an enls $X$ is separable, then $X$ must be a normed linear space (Proposition 3.10 in \cite{nwiv}). The next theorem relate the separability of the finest space $(X, \tau_{F})$ of an enls $X$ to the metrizability of the weak$^*$ topology on the closed unit ball $B_{X^*} = \{f\in X^*: \parallel f\parallel_{op}\leq 1\}$.

\begin{lemma}\label{separability of M}
	Let $(X, \parallel\cdot\parallel)$ be an enls with $X=X_{fin}\oplus M$ and flc topology $\tau_{F}$. Then $(M, \tau_{F}|_M)$ is separable if and only if the dimension of $M$ is countable.\end{lemma}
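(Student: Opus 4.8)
The plan is to identify the restricted topology $\tau_F|_M$ with the \emph{finest locally convex topology on the vector space $M$}, and then to invoke (with a brief proof sketch, since I do not assume it is available as a citation) the classical fact that the finest locally convex topology on a vector space is separable exactly when that space has countable algebraic dimension.

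First I would show that $\tau_F|_M$ coincides with the finest locally convex topology on $M$. One inclusion is immediate: $\tau_F|_M$ is a locally convex vector topology on $M$, hence coarser than the finest one. For the reverse inclusion, take an arbitrary seminorm $q$ on $M$ and transport it to $X$: let $P_M:X\to M$ be the projection along $X_{fin}$ coming from the decomposition $X=X_{fin}\oplus M$, and set $p(x)=q(P_M x)$. Then $p$ is a seminorm on $X$, and it is $\tau$-continuous because every nonzero vector of $M$ has infinite norm, so the $\tau$-neighborhood $\{x:\parallel x\parallel<1\}$ of $0$ lies inside $X_{fin}=\ker P_M$; thus $p$ vanishes on a neighborhood of $0$. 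By Theorem 3.5 in \cite{flctopology}, $p$ is continuous on $(X,\tau_{F})$, so $q=p|_M$ is continuous on $(M,\tau_{F}|_M)$. Since $q$ was arbitrary, every seminorm on $M$ is $\tau_{F}|_M$-continuous, which is the reverse inclusion.

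Writing $\xi$ for this finest locally convex topology on $M$, I would then prove both directions of the equivalence. If $\dim M\le\aleph_0$, fix a Hamel basis $\{e_n\}$ of $M$ and a countable dense subfield $Q$ of $\mathbb{K}$; the set of finite $Q$-linear combinations of the $e_n$ is countable, and it is $\xi$-dense by a routine argument: a basic $\xi$-neighborhood of a point $x$ has the form $x+V$ with $V$ absolutely convex and absorbing, each $e_n$ can be scaled into $V$, and choosing rational coefficients within the resulting tolerances and combining finitely many such terms via convexity places a point of the countable set into $x+V$. Conversely, if $\dim M$ is uncountable, any countable subset $D$ of $M$ is supported, in a fixed Hamel basis, on a countable index set $F$; choosing a basis vector $e_\beta$ with $\beta\notin F$ and the coordinate seminorm $p\big(\sum c_\alpha e_\alpha\big)=|c_\beta|$ (which is $\xi$-continuous by the first step), the $\xi$-neighborhood $\{m\in M:p(m-e_\beta)<1\}$ of $e_\beta$ avoids $D$, so $D$ is not $\xi$-dense and $(M,\xi)$ is not separable.

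The only non-bookkeeping point — and hence the main obstacle — is the first step: seeing that restricting to the complement $M$ loses none of the seminorms, i.e.\ that $\tau_{F}|_M$ is genuinely the finest locally convex topology on $M$. This is exactly what the explicit construction $p=q\circ P_M$ provides, together with the fact that the $\tau$-balls around $0$ are contained in $X_{fin}$. Everything downstream is the standard separability analysis of the finest locally convex topology on a vector space.
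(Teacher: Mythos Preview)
Your proof is correct and shares the same technical core as the paper's: both construct a seminorm on $X$ that vanishes on $X_{fin}$ (hence is $\|\cdot\|$-continuous, hence $\tau_F$-continuous via Theorem~3.5 of \cite{flctopology}) and restrict it to $M$ to separate a point of $M$ from a given countable set. The difference is one of packaging. The paper builds only the single coordinate-type seminorm it needs: given a countable $A\subseteq M$ and $x_0\in M$ independent of $A$, it writes $X=Z\oplus\mathrm{span}(x_0)$ with $X_{fin}\cup A\subseteq Z$ and takes $\rho(\alpha x_0+z)=|\alpha|$. You instead first prove the general statement that $\tau_F|_M$ is the \emph{finest} locally convex topology on $M$ (since every seminorm on $M$ lifts to $X$ via $q\circ P_M$ and vanishes on $X_{fin}$), and then apply the standard separability dichotomy for that topology. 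Your intermediate identification is a cleaner and reusable fact; the paper's argument is more ad hoc but marginally shorter. The forward (countable-dimension $\Rightarrow$ separable) direction is handled identically in both, by rational linear combinations of a Hamel basis.
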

\begin{proof}Suppose $M=\text{span}\{x_n: n\in\mathbb{N}\}$. Then the collection of all linear combinations of $\{x_n: n\in\mathbb{N}\}$ with complex coefficient whose real and imaginary part are rational is dense in $(M, \tau_{F}|_M)$.
	
Conversely, suppose the dimension of $M$ is uncountable. Let $A=\{x_n:n\in\mathbb{N}\}$ be any countable subset of $M$. Then there exists an $x_0$ in $M$ such that $x_0$ is linearly independent of $A$. Suppose $Z$ is subspace of $X$ such that $X=Z\oplus \text{span}(x_0)$. Define a seminorm $\rho:X\to [0, \infty)$ by $\rho(\alpha x_0+y)=|\alpha|$ for $y\in Z$. Since $\rho\equiv0$ on $X_{fin}$, $\rho$ is continuous on $(X, \parallel\cdot\parallel)$. By Theorem  3.5 in \cite{flctopology}, $\rho$ is continuous on $(X, \tau_{F})$. Note that $\rho^{-1}((0,2))\cap M\in \tau_{F}|_M$ and $x_0\in \rho^{-1}((0, 2))$. As $x_n\notin\rho^{-1}((0,2))\cap M$ for any $n\in \mathbb{N}$, $A$ is not dense in $M$.\end{proof}

\begin{theorem}\label{metrizability of B_{X^*}}
 	Suppose $(X, \parallel\cdot\parallel)$ is an enls with flc topology $\tau_{F}$. Then the following statements are equivalent.
 	 \begin{itemize}
 	 	\item[(1)] $(B_{X^*}, \tau_{w^*})$ is metrizable;
 	 	\item[(2)] the dimension of $M$ is countable and $(X_{fin}, \parallel\cdot\parallel)$ is separable;
 	 	\item[(3)] both $(M, \tau_F|_M)$  and $(X_{fin}, \parallel\cdot\parallel)$ are separable;
 	 	\item[(4)] $(X, \tau_{F})$ is separable.
 	 \end{itemize}\end{theorem}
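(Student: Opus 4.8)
The plan is to reduce the two spaces under consideration to products along the algebraic splitting $X=X_{fin}\oplus M$, and then read off the four conditions from classical facts together with Lemma~\ref{separability of M} and Theorem~\ref{normability of weak^* topology}. I will prove the chain $(1)\Leftrightarrow(2)\Leftrightarrow(3)\Leftrightarrow(4)$. Throughout I use that $(X,\tau)^*=(X,\tau_F)^*$, that $\tau|_{X_{fin}}$ is the norm topology of $(X_{fin},\parallel\cdot\parallel)$, and that every $\tau$-continuous (finite-valued) seminorm on $X$ is $\tau_F$-continuous (Theorem 3.5 in \cite{flctopology}); in particular $\tau_F$ is generated by all $\tau$-continuous seminorms. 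The equivalence $(2)\Leftrightarrow(3)$ is then immediate from Lemma~\ref{separability of M}, since $\dim M$ is countable if and only if $(M,\tau_F|_M)$ is separable.

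For $(3)\Leftrightarrow(4)$ I first record that $(X,\tau_F)$ is the topological direct sum of $(X_{fin},\parallel\cdot\parallel)$ and $(M,\tau_F|_M)$; equivalently, $\tau_F$ is the product of these topologies on $X=X_{fin}\oplus M$. Let $P_1\colon X\to X_{fin}$ and $P_2\colon X\to M$ be the projections. The point is that a finite-valued seminorm $p$ on $X$ is $\tau$-continuous exactly when $p|_{X_{fin}}$ is norm-continuous, with no constraint on $p|_M$, because every $\tau$-neighbourhood of $0$ contains some $\{x:\parallel x\parallel<\varepsilon\}\subseteq X_{fin}$. Hence $r\circ P_1$ (for any norm-continuous seminorm $r$ on $X_{fin}$) and $q\circ P_2$ (for any seminorm $q$ on $M$) are $\tau$-continuous seminorms on $X$, so they are $\tau_F$-continuous; this makes $P_1,P_2$ continuous on $(X,\tau_F)$ and puts the product topology inside $\tau_F$. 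Conversely, any $\tau$-continuous $p$ satisfies $p\le(p|_{X_{fin}}\circ P_1)+(p|_M\circ P_2)$, which is product-continuous, so $\tau_F$ is contained in the product topology. (This structural description is implicit in \cite{esaetvs, flctopology}.) Given the product structure, $(4)\Rightarrow(3)$ holds because $P_1,P_2$ are continuous surjections and a continuous image of a separable space is separable, while $(3)\Rightarrow(4)$ holds because a finite product of separable spaces is separable.

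The substance is $(1)\Leftrightarrow(2)$. The restriction map $f\mapsto(f|_{X_{fin}},f|_M)$ is a bijection of $X^*$ onto $(X_{fin})^*\times M^{\#}$, where $M^{\#}$ denotes the algebraic dual of $M$: injectivity is clear since $X=X_{fin}+M$, and surjectivity holds because for $g\in(X_{fin})^*$ and $\phi\in M^{\#}$ the functional $g\circ P_1+\phi\circ P_2$ is bounded by $\parallel g\parallel_{op}\,\varepsilon$ on $\{x:\parallel x\parallel<\varepsilon\}$, hence $\tau$-continuous. Since $\parallel f\parallel_{op}$ depends only on $f|_{X_{fin}}$ and equals $\parallel f|_{X_{fin}}\parallel$, the ball $B_{X^*}$ is carried onto $B_{(X_{fin})^*}\times M^{\#}$. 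Moreover, writing $x=x_1+x_2$, the weak$^*$ seminorm $\rho_x$ becomes $(g,\phi)\mapsto|g(x_1)+\phi(x_2)|$; allowing $x_2=0$ and $x_1=0$ shows that the family $\{\rho_x:x\in X\}$ generates exactly the product of the weak$^*$ topology of $(X_{fin})^*$ and the topology $\sigma(M^{\#},M)$ of pointwise convergence on $M$. Therefore $(B_{X^*},\tau_{w^*})$ is homeomorphic to $B_{(X_{fin})^*}\times M^{\#}$, where the first factor carries its weak$^*$ topology and the second carries $\sigma(M^{\#},M)$. A product of two nonempty spaces is metrizable if and only if both factors are; the first factor is metrizable if and only if $(X_{fin},\parallel\cdot\parallel)$ is separable (the classical normed-space theorem recalled before the statement), and the second, being the weak$^*$ topology of the locally convex space $(M,\tau_F|_M)$ whose dual is $M^{\#}$, is metrizable if and only if $\dim M$ is countable (Theorem~\ref{normability of weak^* topology} applied to the space $M$). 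These two conditions together are precisely $(2)$.

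I expect the only real difficulty to be the bookkeeping in $(1)\Leftrightarrow(2)$: checking that under $X^*\cong(X_{fin})^*\times M^{\#}$ the weak$^*$ topology genuinely splits as a product, and correctly identifying the $M^{\#}$-factor so that Theorem~\ref{normability of weak^* topology} delivers the ``$\dim M$ countable'' clause. Once the topological direct-sum description of $(X,\tau_F)$ is in place (which itself is a short verification, or may be quoted), the remaining implications are routine applications of the behaviour of separability and metrizability under finite products and continuous surjections.
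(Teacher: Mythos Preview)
Your proof is correct and uses the same structural ingredients as the paper: the splitting $(X,\tau_F)\cong(X_{fin},\parallel\cdot\parallel)\times(M,\tau_F|_M)$ for $(3)\Leftrightarrow(4)$, Lemma~\ref{separability of M} for $(2)\Leftrightarrow(3)$, and the identification of $(B_{X^*},\tau_{w^*})$ with $B_{(X_{fin})^*}\times M^{\#}$ (product of weak$^*$ topologies) for $(1)\Leftrightarrow(2)$. The only organizational difference is that the paper treats the two directions of $(1)\Leftrightarrow(2)$ asymmetrically---for $(1)\Rightarrow(2)$ it embeds $B_{(X_{fin})^*}$ as a compact subset of $B_{X^*}$ (invoking Banach--Alaoglu) and runs a separate contradiction argument for the countability of $\dim M$, establishing the full product homeomorphism only for $(2)\Rightarrow(1)$---whereas you set up the product homeomorphism once and read off both directions from the metrizability of a two-factor product. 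Your route is slightly more economical; the paper's route avoids verifying that the weak$^*$ topology splits as a product before one already knows the factors are well-behaved.
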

 \begin{proof}
 	(1)$\Rightarrow$(2). First we show that $(X_{fin}, \parallel\cdot\parallel)$ is separable. Let $\tau_{w^*}^f$ be the weak$^*$ topology on the closed unit ball $B_{X_{fin}^*}$ of $(X_{fin}, \parallel\cdot\parallel)^*$. We show that $(B_{X_{fin}^*}, \tau_{w^*}^f)$ is homeomorphic to a subspace of $(B_{X^*}, \tau_{w^*})$. Define  $\Psi:(B_{X_{fin}^*}, \tau_{w^*}^f)\rightarrow (B_{X^*}, \tau_{w^*})$ by $\Psi(f)=f'$, where \[ f'(x)=
 	\begin{cases}
 	\text{$f(x),$} &\quad\text{if $x\in X_{fin} $}\\
 	\text{0,} &\quad\text{if $x\in M.$ }\\
 	\end{cases}\]  Since $\parallel f\parallel_{op}=\parallel f'\parallel_{op}$, the map $\Psi$ is well defined. It is easy to see that $\Psi$ is an injective continuous map. By classical Banach Alaoglu Theorem (Theorem 3.21 in \cite{faaidg}, p. 71), $(B_{X_{fin}^*}, \tau_{w^*}^f)$ is compact. So $(B_{X_{fin}^*}, \tau_{w^*}^f)$ is homeomorphic to the subset $Z=\{f\in B_{X^*}: f|_M=0 \}$ of $B_{X^*}$. Hence $(B_{X_{fin}^*}, \tau_{w^*}^f)$ is metrizable. Thus $X_{fin}$ is separable.
 	
 	Now suppose that the dimension of $M$ is uncountable. Since $(B_{X^*}, \tau_{w^*})$ is metrizable, there exists a countable neighborhood base $\{U_n: n\in\mathbb{N}\}$ of $0$ in $(B_{X^*}, \tau_{w^*})$. For each $n\in\mathbb{N}$, there exists a finite subset $F_n$ of $X$ such that $\left((F_n)^\circ \cap B_{X^*}\right) \subseteq U_n$.  Let $x_0\in M$ be linearly independent of $\cup_{n\in\mathbb{N}}F_n$. Then one can find $f\in X^*$ such that $f\in B_{X^*}$ with $f(x)=0$ for every $x\in\cup_{n\in\mathbb{N}}F_n$ and $f(x_0)=2$. Which is not possible as $\{x_0\}^\circ\cap B_{X^*}$ is a neighborhood of $0$ in $(B_{X^*}, \tau_{w^*})$ but $f\in\left((F_n)^\circ) \cap B_{X^*}\right)\setminus\left(\{x_0\}^\circ\cap B_{X^*}\right)$ for all $n \in \mathbb{N}$.
 	
   The implications (2)$\Leftrightarrow$(3) follow from Lemma \ref{separability of M}.
   
   (3)$\Leftrightarrow$(4). Consider the map $\Psi:(X, \tau_{F})\rightarrow (X_{fin}, \parallel\cdot\parallel)\times (M, \tau_{F}|_M)$ by $\Psi(x)=(x_f, x_M),$ where $x=x_f+x_M$. Clearly, $\Psi$ is linear and bijective. If $U$ and $V$ are neighborhood of $0$ in $(X_{fin}, \parallel\cdot\parallel)$ and $(M, \tau_{F}|_M)$ respectively, then there exist $r>0$ and a continuous seminorm $\rho$ on $(M, \tau_{F}|_M)$ such that $B(0, r)=\{x\in X_{fin}: \parallel x\parallel<r \}\subseteq U$ and $\rho^{-1}([0,r))\subseteq V$. Define a seminorm $q:X\rightarrow[0, \infty)$ by $q(x)=\parallel x_f\parallel+ \rho(x_M)$. Since $q(x)\leq \parallel x\parallel$ for $x\in X$, we have $q$ is a continuous seminorm on $(X,\parallel\cdot\parallel)$. By Theorem 3.5 of \cite{flctopology}, $q$ is a continuous seminorm on $(X, \tau_{F})$. Also $\Psi(q^{-1}[0, r))\subseteq B(0, r)\times \rho^{-1}([0, r)) \subseteq U\times V$. So $\Psi$ is continuous.
 
 For the continuity of $\Psi^{-1}$, let $\rho$ be a continuous seminorm on $(X, \tau_{F})$ and $r>0$. Then $\rho$ is continuous on $(X, \parallel\cdot\parallel)$. Consequently, $(\rho^{-1}([0, \frac{r}{2}))\cap X_{fin})\times(\rho^{-1}([0, \frac{r}{2}))\cap M)$ is a neighborhood of $0$ in $(X_{fin}, \parallel\cdot\parallel)\times (M, \tau_{F}|_M)$ and  $(\rho^{-1}([0, \frac{r}{2}))\cap X_{fin})\times(\rho^{-1}([0, \frac{r}{2}))\cap M)\subseteq\Psi(\rho^{-1}[0, r))$. Therefore $\Psi^{-1}$ is continuous. Hence $(X, \tau_{F})$ is isomorphic to $(X_{fin}, \parallel\cdot\parallel)\times (M, \tau_{F}|_M)$. Which completes the proof.   

(2)$\Rightarrow$(1). Let dimension of $M$ be countable and let $(X_{fin}, \parallel\cdot\parallel)$ be separable. By Exercise 8.201(e) in \cite{tvsnarici}, p. 271 and Theorem \ref{normability of weak^* topology}, the product space $(B_{X_{fin}^*}, \tau_{w^*}^f)\times (M^*, \tau_{w^*})$ is metrizable. Consider  the map $\Psi:(B_{X^*}, \tau_{w^*})\to(B_{X_{fin}^*}, \tau_{w^*}^f)\times (M^*, \tau_{w^*})$ by $\Psi(f)=(f|_{X_{fin}}, f|_M)$. Then by Theorem 4.3 in \cite{nwiv}, $\Psi$ is  bijective. Since a net $f_\lambda\to f$ pointwise in $X^*$ if and only if both $f_\lambda|_{X_{fin}}\to f|_{X_{fin}}$ pointwise in $X_{fin}^*$ and $f_\lambda|_{M}\to f|_{M}$ pointwise in $M^*$, $\Psi$ is a homeomorphism. Hence $(B_{X^*}, \tau_{w^*})$ is metrizable. \end{proof}

We end this section with an application of our result to function spaces. Let $C(X)$ denote the set of all real-valued continuous functions on a metric space $(X,d)$. A family of non-empty subsets of $X$ is said to be a \textit{bornology} on $X$ if it covers $X$ and is closed under finite union and taking subsets of its members. A subfamily $\mathcal{B}_0$ of $\mathcal{B}$ is said be a \textit{closed base} for $\mathcal{B}$ if every element of $\mathcal{B}_0$ is closed and $\mathcal{B}_0$ is cofinal in $\mathcal{B}$ under the set inclusion.

The most commonly used topology on $C(X)$ is the classical topology of uniform convergence on $\mathcal{B}$, usually denoted by $\tau_{\mathcal{B}}$ (see \cite{Suc} for the definition). 

The topology $\tau_{\mathcal{B}}$ is actually induced by the collection $\left\lbrace \rho_B:B\in \mathcal{B}_0 \right\rbrace$ of extended seminorms on $C(X)$, where $\rho_B(f) =\sup_{x\in B}|f(x)|$ for each $f \in C(X)$. Consequently, $(C(X), \tau_{\mathcal{B}})$ is an elcs.

For  a bornology $\mathcal{B}$ on $(X,d)$ with a closed base, in \cite{Suc}, Beer and Levi introduced a variational form of $\tau_{\mathcal{B}}$ called  the topology of strong uniform convergence on $\mathcal{B}$, denoted by $\tau^{s}_{\mathcal{B}}$ (see \cite{Suc} for the definition). 

The topology $\tau_\mathcal{B}^s$ on $C(X)$ can also be seen to be induced by the extended seminorms of the form $$\rho_B^s(f) =\inf_{\delta>0}\left\lbrace \sup_{x\in B^\delta}|f(x)|\right\rbrace, $$ where $B\in \mathcal{B}_0$, and $B^{\delta} = \{x \in X: d(x,y) < \delta \text{ for some } y \in B\}$.    

In general, $\tau_{\mathcal{B}}$ and $\tau_{\mathcal{B}}^{s}$ are not equal on $C(X)$. However, if the bornology $\mathcal{B}$ is shielded from closed sets, then $\tau_\mathcal{B} =\tau_\mathcal{B}^s$ on $C(X)$ (see, Theorem 4.1 in \cite{Ucucas}).

 \begin{proposition}\label{codimension of finite subspace with respect to B in tauBs}
 	Suppose $(X,d)$ is a metric space and $B\subseteq X$ is closed. Then $B$ is compact if and only if the quotient space $C(X)/C(X)_{fin}^{B}$ is finite dimensional, where $C(X)_{fin}^{B}=\{f\in C(X): \sup_{x\in B}|f(x)|<\infty\}$.\end{proposition}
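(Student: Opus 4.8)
The plan is to prove the contrapositive of the nontrivial implication: if $B$ is \emph{not} compact then $C(X)/C(X)_{fin}^{B}$ is infinite dimensional. The easy direction is immediate: if $B$ is compact, then every $f\in C(X)$ is bounded on $B$, so $C(X)_{fin}^{B}=C(X)$ and the quotient is $\{0\}$, which is (trivially) finite dimensional.

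So assume $B$ is closed but not compact. Since $(X,d)$ is metrizable, $B$ with its subspace metric is not sequentially compact, so there is a sequence in $B$ with no subsequence converging in $B$; as $B$ is closed, such a sequence has no subsequence converging in $X$ either. First I would pass to a subsequence of pairwise distinct terms and set $D:=\{y_{n}:n\in\mathbb{N}\}\subseteq B$. This $D$ has no limit point in $X$ (a limit point would yield a sequence of distinct points of $D$, hence a convergent subsequence of the original sequence), so $D$ is closed in $X$ and discrete in its subspace topology, and it is countably infinite.

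The key construction is then the following. Fix a partition $\mathbb{N}=\bigsqcup_{k\in\mathbb{N}}S_{k}$ into infinitely many infinite sets, and for each $k$ define $h_{k}\colon D\to\mathbb{R}$ by $h_{k}(y_{n})=n$ if $n\in S_{k}$ and $h_{k}(y_{n})=0$ otherwise. Since $D$ is discrete, $h_{k}$ is continuous on $D$, and since $D$ is closed in the (normal) metric space $X$, the Tietze extension theorem furnishes $g_{k}\in C(X)$ with $g_{k}|_{D}=h_{k}$. I claim the images $\overline{g_{k}}$ in $C(X)/C(X)_{fin}^{B}$ are linearly independent: given scalars $c_{1},\dots,c_{N}$ with $c_{j}\neq 0$ for some $j\le N$, the function $\sum_{k=1}^{N}c_{k}g_{k}$ equals $c_{j}n$ at $y_{n}$ for every $n\in S_{j}$ (all other summands vanish there), and since $S_{j}$ is infinite and $\{y_{n}:n\in S_{j}\}\subseteq D\subseteq B$, this gives $\sup_{x\in B}\bigl|\sum_{k=1}^{N}c_{k}g_{k}(x)\bigr|=\infty$, so $\sum_{k}c_{k}g_{k}\notin C(X)_{fin}^{B}$. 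Hence $\{\overline{g_{k}}:k\in\mathbb{N}\}$ is an infinite linearly independent family and $C(X)/C(X)_{fin}^{B}$ is infinite dimensional.

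The only delicate part is the topological bookkeeping of the second paragraph: extracting from the failure of compactness of the closed set $B$ a subset $D$ that is simultaneously countably infinite, contained in $B$, and \emph{closed in $X$} (not merely in $B$), the last point being exactly what is needed to apply Tietze on $X$. Once $D$ is in hand, the construction of the $g_{k}$ and the linear-independence computation are routine.
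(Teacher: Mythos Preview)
Your proof is correct and follows the same overall strategy as the paper: both extract from the non-compact closed set $B$ a countably infinite subset that is closed and discrete in $X$, then use Tietze extension to manufacture an infinite family in $C(X)$ that is linearly independent modulo $C(X)_{fin}^{B}$. The difference lies only in how that family is built. The paper defines $f_m$ on the discrete set by $f_m(x_n)=n^{2m}$ for $n\ge m$ and $0$ otherwise, and then needs a short growth estimate to show that in any nontrivial combination $\sum_{j=1}^{k}c_j f_j$ the top term $c_k n^{2k}$ eventually dominates, forcing unboundedness on $B$. Your construction instead partitions $\mathbb{N}$ into infinitely many infinite pieces $S_k$ and supports $g_k$ on $\{y_n:n\in S_k\}$; because these supports are pairwise disjoint, any nontrivial combination restricts on some $S_j$ to $c_j n$, and linear independence is immediate without any inequality. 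Your version is a bit more elementary and avoids the arithmetic bookkeeping; the paper's version has the minor advantage of not needing the auxiliary partition. Either way the argument goes through.
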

 \begin{proof}
 	If $B$ is compact, then $C(X)_{fin}^{B}=C(X)$. Conversely, suppose $B$ is not compact, then there exists a countable subset $A=\{x_n:n\in\mathbb{N}\}$ of $B$ which is closed and discrete in $(X,d)$. For every $m\in\mathbb{N}$, define a function $g_m$ on $A$ by  
 	\[g_m(x_n)=
 	\begin{cases}
 	\text{$0$;} &\quad\text{ if $n<m$}\\
 	\text{$n^{2m}$;} &\quad\text{if $n\geq m$.}\\
 	\end{cases}
 	\]
 	Clearly, $g_m$ is continuous on $A$ for every $m\in \mathbb{N}$. By Tietze extension theorem, there exists $f_m\in C(X)$ such that $f_m|_A=g_m$ for each $m\in\mathbb{N}$. It is easy to see that $L=\{f_m:m\in\mathbb{N}\}$ is a linearly independent subset of $C(X)$. Let $h=\sum_{j=1}^{k} c_jf_j$ for scalars $c_1, c_2,...,c_k$. We can assume that $c_k>0$ and $k\geq 2$. Suppose $M>0$. Then there exists an $n\in \mathbb{N}$ such that $n>\max\{\frac{|c_j|}{c_k}, \frac{1}{c_k}, k^2, M : 1\leq j\leq k\}$. Note that $$\frac{n}{c_k}+\sum_{j=1}^{k-1}\frac{|c_jn^{2j}|}{c_k}\leq n^2+\sum_{j=1}^{k-1}n.n^{2(k-1)}\leq n^{2k-1}+n^{2k-1}\left( \frac{k^2}{2}\right)\leq n^{2k}.$$
 	Therefore $h(x_n)=\sum_{j=1}^{k}c_jn^{2j}\geq c_k n^{2k}-\sum_{j=1}^{k-1}|c_j|n^{2j} \geq n >M$. Hence $h\notin C(X)_{fin}^{B}$. Consequently, the dimension of the quotient space $C(X)/C(X)_{fin}^{B}$ is not finite. We arrive at a contradiction. \end{proof}
  
\begin{theorem}
	Suppose $\mathcal{B}$ is a bornology on a metric space $(X,d)$ with a closed base. Then the following statements are equivalent.
\begin{itemize}
	\item[(1)] The flc topology for $(C(X), \tau_{\mathcal{B}}^s)$ is metrizable;
	\item[(2)] the flc topology for $(C(X), \tau_{\mathcal{B}})$ is metrizable;
	\item[(3)] $\mathcal{B}$ has a countable compact base, that is, it has a countable base consisting of compact sets. 
\end{itemize}\end{theorem}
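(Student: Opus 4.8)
The plan is to establish the four implications (3)$\Rightarrow$(1), (3)$\Rightarrow$(2), (1)$\Rightarrow$(3), (2)$\Rightarrow$(3); the two implications out of (3) are routine, and the two into (3) run in parallel, sharing a single substantive step. First I would dispose of (3)$\Rightarrow$(1) and (3)$\Rightarrow$(2): if $\mathcal{B}$ has a countable compact base $\{B_n:n\in\mathbb{N}\}$, then (compact sets being closed in a metric space) we may use $\{B_n\}$ itself as the closed base. Every $f\in C(X)$ is bounded on the compact set $B_n$ and, by a Lebesgue-number argument, bounded on some enlargement $B_n^\delta$; hence $\rho_{B_n}$ and $\rho_{B_n}^s$ are finite-valued, i.e.\ genuine seminorms, and they are separating since $\bigcup_n B_n=X$. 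So $C(X)_{fin}=C(X)$ for both $\tau_{\mathcal{B}}$ and $\tau_{\mathcal{B}}^s$; each of these topologies is then already locally convex, hence equal to its own flc topology, and, being induced by a countable separating family of seminorms, is metrizable.

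\noindent The substantive input for (1)$\Rightarrow$(3) and (2)$\Rightarrow$(3) is the claim that if the flc topology $\tau_F$ of $(C(X),\tau_{\mathcal{B}})$, respectively of $(C(X),\tau_{\mathcal{B}}^s)$, is metrizable, then every $B\in\mathcal{B}_0$ is compact. I would prove this by contradiction. If some $B\in\mathcal{B}_0$ is not compact, the construction inside the proof of Proposition \ref{codimension of finite subspace with respect to B in tauBs} yields an infinite linearly independent family $\{f_m:m\in\mathbb{N}\}\subseteq C(X)$ every nonzero linear combination of which is unbounded on $B$, so $V:=\text{span}\{f_m:m\in\mathbb{N}\}$ meets $C(X)_{fin}^{B}$ only at $0$. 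Fixing an algebraic complement $W$ of $V$ in $C(X)$ with $C(X)_{fin}^{B}\subseteq W$, for $w\in W$ and $0\neq v\in V$ the vector $w+v$ cannot lie in $C(X)_{fin}^{B}\subseteq W$, so $\rho_B(w+v)=\infty$ and hence $\rho_B^s(w+v)=\infty$. Consequently, for every seminorm $\nu$ on $V$, the seminorm $\widetilde\nu$ on $C(X)$ defined by $\widetilde\nu(w+v)=\nu(v)$ satisfies $\widetilde\nu\le\rho_B\le\rho_B^s$, so $\widetilde\nu$ is $\tau_{\mathcal{B}}$-continuous (resp.\ $\tau_{\mathcal{B}}^s$-continuous), and therefore $\tau_F$-continuous by Theorem 3.5 of \cite{flctopology}. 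Restricting to $V$, every seminorm on $V$ is $\tau_F|_V$-continuous; since $\tau_F|_V$ is itself locally convex, it must be the finest locally convex topology on $V$, which (as $V$ is infinite dimensional) is not metrizable — contradicting the metrizability of $\tau_F$, because subspaces of metrizable spaces are metrizable. (Alternatively, one could combine a metrizability criterion from \cite{flctopology} with Proposition \ref{codimension of finite subspace with respect to B in tauBs} at this point.)

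\noindent Granting the claim, (1)$\Rightarrow$(3) and (2)$\Rightarrow$(3) are finished the same way. With every $B\in\mathcal{B}_0$ compact, $C(X)_{fin}=C(X)$, so $\tau_{\mathcal{B}}$ (resp.\ $\tau_{\mathcal{B}}^s$) is locally convex and equals the flc topology $\tau_F$, which is metrizable by hypothesis; hence it has a countable base $\{W_n:n\in\mathbb{N}\}$ of neighbourhoods of $0$. Using $\rho_{B^{(1)}\cup\cdots\cup B^{(k)}}=\max_i\rho_{B^{(i)}}$ (and a short computation for the analogous identity with the $s$-seminorms), each $W_n$ contains a set $\{f:\rho_{B_n}(f)<\delta_n\}$ (resp.\ $\{f:\rho_{B_n}^s(f)<\delta_n\}$) for some $B_n\in\mathcal{B}_0$ and $\delta_n>0$. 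Now for arbitrary $B\in\mathcal{B}_0$, the set $\{f:\rho_B(f)<1\}$ (resp.\ $\{f:\rho_B^s(f)<1\}$) is a neighbourhood of $0$, so it contains some $W_n$; homogeneity gives $\rho_B\le\delta_n^{-1}\rho_{B_n}$ (resp.\ the $s$-version), and a Urysohn function equal to $1$ at a putative point of $B\setminus B_n$ and vanishing on a suitable neighbourhood of the compact set $B_n$ would contradict this inequality. Hence $B\subseteq B_n$, so $\{B_n:n\in\mathbb{N}\}$ is a countable subfamily of $\mathcal{B}_0$ cofinal in $\mathcal{B}$ and consisting of compact sets — a countable compact base.

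\noindent The step I expect to be the main obstacle is the claim in the second paragraph: extracting, from the mere noncompactness of $B$, an infinite-dimensional subspace of $C(X)$ on which the flc topology is as fine as possible. The point that makes this work is that a seminorm on such a subspace $V$ extends to a $\tau_{\mathcal{B}}$-continuous seminorm on $C(X)$ simply by ignoring the complementary component $W$, because $W$ absorbs all of $C(X)_{fin}^{B}$ and so the extended seminorm is dominated by $\rho_B$, which equals $+\infty$ off $C(X)_{fin}^{B}$. Once this is in place, the remainder is routine manipulation of the generating seminorms together with a standard Urysohn argument, and the two implications out of (3) are immediate.
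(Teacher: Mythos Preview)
Your proposal is correct and follows the same overall architecture as the paper's proof: show that metrizability of the flc topology forces every closed $B\in\mathcal{B}$ to be compact, deduce that the elcs is already locally convex (so $\tau_F=\tau_{\mathcal{B}}$, resp.\ $\tau_F=\tau_{\mathcal{B}}^s$), and then extract a countable base from metrizability. The difference lies in how the two substantive steps are executed.

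For the step ``$B$ not compact $\Rightarrow$ $\tau_F$ not metrizable'', the paper invokes a ready-made criterion (Theorem~5.10 of \cite{flctopology}: metrizability of $\tau_F$ forces $\dim\bigl(C(X)/C(X)_{fin}^{\rho}\bigr)<\infty$ for every continuous extended seminorm $\rho$) and then Proposition~\ref{codimension of finite subspace with respect to B in tauBs}. You instead recycle the explicit family $\{f_m\}$ from the proof of Proposition~\ref{codimension of finite subspace with respect to B in tauBs}, observe that any seminorm on $V=\operatorname{span}\{f_m\}$ extends to a $\tau$-continuous seminorm by setting it to zero on a complement $W\supseteq C(X)_{fin}^{B}$ (since the extension is dominated by $\rho_B$), and conclude that $\tau_F|_V$ is the finest locally convex topology on an infinite-dimensional space, hence non-metrizable. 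This is a clean, self-contained replacement for the cited criterion, and your parenthetical remark shows you are aware of the shortcut the paper takes.

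For the countable-base extraction, the paper simply cites Theorem~7.1 of \cite{Suc} (for $\tau_{\mathcal{B}}^s$) and Theorem~4.4.2 of \cite{MN} (for $\tau_{\mathcal{B}}$). You replace these citations by a direct argument: from a countable neighbourhood base pick $B_n\in\mathcal{B}_0$ with $\{\rho_{B_n}<\delta_n\}\subseteq W_n$, obtain $\rho_B\le\delta_n^{-1}\rho_{B_n}$ for suitable $n$, and use a Urysohn function vanishing on a $\delta$-enlargement of the compact $B_n$ to force $B\subseteq B_n$. This is correct (and your monotone-in-$\delta$ computation for $\rho^s_{B_1\cup\cdots\cup B_k}=\max_i\rho^s_{B_i}$ goes through, though it is enough to use cofinality of $\mathcal{B}_0$ in $\mathcal{B}$ to reduce to single-set seminorms). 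The payoff is that your argument is independent of the external metrizability theorems for $\tau_{\mathcal{B}}$ and $\tau_{\mathcal{B}}^s$; the cost is a little more work on the page.
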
 
\begin{proof}
 (1)$\Rightarrow$ (3). Suppose $\tau_F$ is the flc topology for $(C(X), \tau_{\mathcal{B}}^s)$ and $B\in \mathcal{B}$ is closed. Since $\rho_B$ is a continuous extended seminorm on $(C(X), \tau_{\mathcal{B}}^s)$ and $\tau_F$ is metrizable, the dimension of $C(X)/C(X)_{fin}^{B}$ is finite (see, Theorem 5.10, (a)$\implies$(b) in \cite{flctopology}). By Proposition \ref{codimension of finite subspace with respect to B in tauBs}, $B$ is compact. Therefore $\mathcal{B}$ has a compact base. Consequently, $\rho_{B}^s$ is a seminorm on $C(X)$ for every $B\in\mathcal{B}$. Hence $\tau_{\mathcal{B}}^s=\tau_F$ is metrizable. Thus by Theorem 7.1 in \cite{Suc}, $\mathcal{B}$ has a countable base.  
 
 (3)$\Rightarrow$ (1). If $\mathcal{B}$ has a compact base, then $(C(X), \tau_{\mathcal{B}}^s)$ is a locally convex space. Thus the result follows from Theorem 7.1 in \cite{Suc}.
 
The equivalence (2)$\Leftrightarrow$ (3) can be proved in a manner similar to (1)$\Leftrightarrow$ (3) by using Theorem 4.4.2 in \cite{MN}.  
\end{proof}

 \begin{definition}\normalfont(\cite{tvsnarici})  Suppose $(X, \tau)$ is a locally convex space and $A\subseteq X$. Then $A$ is said to be \textit{totally bounded} in $X$ if for every  neighborhood $U$ of $0$, there exists a finite set $F\subseteq X$ such that $A\subseteq F+U$. 
 \end{definition}
 
 \begin{theorem}\label{coincidence of weak and flc topology in tauBs}
 	Suppose $\mathcal{B}$ is a bornology on a metric space $(X,d)$ with a closed base. Then the flc topology $\tau_{F}$ for $(C(X), \tau_{\mathcal{B}}^s)$ coincides with its weak topology if and only if $\mathcal{B}=\mathcal{F}$,  where $\mathcal{F}$ is the bornology of all finite subsets of $X$.   
 \end{theorem}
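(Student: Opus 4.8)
The plan is to prove the two implications separately; $\mathcal B=\mathcal F\Rightarrow\tau_F=\tau_w$ is routine, and the converse carries the content.

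For the easy direction I would first observe that for a finite set $B$ the map $\delta\mapsto\sup_{x\in B^{\delta}}|f(x)|$ is non-decreasing in $\delta$ and tends to $\max_{x\in B}|f(x)|$ as $\delta\downarrow 0$, by continuity of $f$ at the finitely many points of $B$; hence $\rho_B^{s}=\rho_B$ for every $B\in\mathcal F$, so $\tau_{\mathcal F}^{s}=\tau_{\mathcal F}$ is just the topology of pointwise convergence on $C(X)$. This topology is already locally convex, hence it is its own flc topology; and since it equals $\sigma(C(X),E)$ for the point-separating subspace $E=\text{span}\{\delta_{x}:x\in X\}$ of $C(X)^{*}$, its dual is exactly $E$ and the weak topology it induces is again $\sigma(C(X),E)$. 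Therefore $\tau_F=\tau_w$.

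For the converse, suppose $\tau_F=\tau_w$ and fix $B$ in a closed base $\mathcal B_{0}$ of $\mathcal B$; it suffices to show that every such $B$ is finite, because $\mathcal B_{0}$ is cofinal in $\mathcal B$, while conversely $\mathcal F\subseteq\mathcal B$ for every bornology. Since $\rho_B\le\rho_B^{s}$ and $\tau_{\mathcal B}^{s}$ refines $\tau_{\mathcal B}$, the extended seminorm $\rho_B$ is continuous on $(C(X),\tau_{\mathcal B}^{s})$, and clearly $C(X)_{fin}^{\rho_B}=C(X)_{fin}^{B}$. If $B$ were not compact, Proposition~\ref{codimension of finite subspace with respect to B in tauBs} would force $C(X)/C(X)_{fin}^{B}$ to be infinite-dimensional, contradicting Proposition~\ref{coinicidence of finest and weak topology}; hence $B$ is compact.

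The remaining and principal difficulty is to rule out an infinite compact $B$. Here $\rho_B$ is finite-valued, so it is a genuine continuous seminorm on $(C(X),\tau_{\mathcal B}^{s})$, and because the locally convex topology it generates on its own is coarser than $\tau_{\mathcal B}^{s}$ it is coarser than $\tau_F$, so $\rho_B$ is $\tau_F$-continuous (Theorem~3.5 in \cite{flctopology}). Using $\tau_F=\tau_w$, the set $\{f\in C(X):\rho_B(f)\le 1\}$ is a neighborhood of $0$ for $\sigma(C(X),C(X)^{*})$, so it contains $\{f:|\phi_{i}(f)|\le\varepsilon,\ 1\le i\le n\}$ for some $\phi_{1},\dots,\phi_{n}\in C(X)^{*}$ and $\varepsilon>0$; this gives $\bigcap_{i=1}^{n}\ker\phi_{i}\subseteq\ker\rho_B$, so $\ker\rho_B$ has finite codimension in $C(X)$. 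On the other hand $\ker\rho_B=\{f\in C(X):f|_{B}\equiv 0\}$, and the restriction map $f\mapsto f|_{B}$, which is onto $C(B)$ by the Tietze extension theorem since $B$ is closed, identifies $C(X)/\ker\rho_B$ with $C(B)$; as $B$ is infinite, $C(B)$ is infinite-dimensional, a contradiction. Hence $B$ is finite, completing the proof. The delicate points I expect are the identification $\rho_B^{s}=\rho_B$ on finite sets and the passage from weak continuity of $\rho_B$ to finite codimension of $\ker\rho_B$; everything else is bookkeeping with the two cited propositions and the flc machinery.
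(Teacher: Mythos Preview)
Your proof is correct, and the overall architecture matches the paper's: the easy direction is handled identically (reducing $\tau_{\mathcal B}^{s}$ to the pointwise topology when $\mathcal B=\mathcal F$), and for the converse you both first use Propositions~\ref{coinicidence of finest and weak topology} and~\ref{codimension of finite subspace with respect to B in tauBs} to force every closed $B\in\mathcal B$ to be compact.

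The genuine difference is in how you then exclude an infinite compact $B$. The paper argues indirectly via total boundedness: the set $A=\{f:\sup_X|f|\le 1\}$ is bounded in $(C(X),\tau_{\mathcal B}^{s})$, hence bounded in $(C(X),\tau_F)$, and since $\tau_F$ carries its weak topology every bounded set is totally bounded; covering $A$ by $m$ translates of $\rho_B^{-1}([0,\tfrac12])$ and then producing a function in $A$ that eludes all of them whenever $|B|>m$ yields the contradiction. Your route is more direct and more conceptual: weak continuity of the seminorm $\rho_B$ forces $\bigcap_{i=1}^{n}\ker\phi_i\subseteq\ker\rho_B$, so $\ker\rho_B$ has finite codimension, while Tietze identifies $C(X)/\ker\rho_B$ with $C(B)$, which is infinite-dimensional for infinite $B$. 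Your argument avoids the total boundedness detour and the ad hoc construction of a separating function; the paper's argument, on the other hand, is self-contained in that it does not invoke Tietze a second time and makes the finiteness of $B$ appear through an explicit cardinality bound. Either way the logic is sound.
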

 \begin{proof} Suppose the flc topology $\tau_{F}$ for $(C(X), \tau_{\mathcal{B}}^s)$ coincides with its weak topology. If $B\in\mathcal{B}$ is closed, then $\rho_B$ is a continuous extended seminorm on $(C(X), \tau_\mathcal{B}^s)$. By Proposition \ref{coinicidence of finest and weak topology}, the dimension of  $C(X)/C(X)_{fin}^{B}$ is finite as $C(X)_{fin}^{B}=\{f\in C(X): \rho_{B}(f)<\infty\}$. So by Proposition \ref{coincidence of weak and flc topology in tauBs}, $B$ is compact. Hence $\rho_B$ is a continuous seminorm on $(C(X), \tau_{\mathcal{B}}^s)$. By Theorem 3.5 in \cite{flctopology}, $\rho_B$ is a continuous seminorm on $(C(X), \tau_F)$. Let $A=\{f\in C(X):\sup_{x\in X}|f(x)|\leq 1\}$. Then $A$ is bounded in the elcs $(C(X), \tau_{\mathcal{B}}^s)$ (see, Definition \ref{bounded set in an elcs}). Consequently, $A$ is bounded in $(C(X), \tau_F)$. By Theorem 8.2.8 in \cite{tvsnarici}, p. 231 , $A$ is totally bounded in $(C(X), \tau_F)$. Therefore there exists a finite set $F=\{f_1, ..., f_m\}\subseteq C(X)$ such that $A\subseteq F+ \rho_{B}^{-1}([0, \frac{1}{2}])$. If $B$ contains more than $m$ elements, say $B\supseteq \{x_1,..., x_m\}$, then there exists $f\in A$ such that for every $1\leq i\leq m$,  
 	\[f(x_i)=
 	\begin{cases}
 	\text{$1$;} &\quad\text{ if $f_i(x_i)<0$}\\
 	\text{$-1$;} &\quad\text{if $f_i(x_i)\geq0$.}\\
 	\end{cases}
 	\]    	
 	Since $f\in A$, $f=f_j+h$ for some $1\leq j\leq m$ and $h\in \rho_{B}^{-1}([0, \frac{1}{2}])$. Which is not possible as $|h(x_j)|=|f(x_j)-f_j(x_j)|>\frac{1}{2}$.  
 	
 	Conversely, if $\mathcal{B}=\mathcal{F}$, then by Theorem 4.1 in \cite{Ucucas}, $\tau_\mathcal{B}^s=\tau_{\mathcal{B}}$. Therefore $\tau_{\mathcal{B}}^s=\tau_p=\tau_F$ on $C(X)$, where $\tau_p$ is the topology of pointwise convergence. It is well known that $(C(X), \tau_p)$ carries its weak topology as for every $x\in X$, the map $x\mapsto f(x)$ for $f\in C(X)$ is a continuous linear functional on $(C(X), \tau_p)$. Which completes the proof.     
 \end{proof}
 
 \begin{theorem}\label{coincidence of weak and flc topology in tauB}
 	Suppose $\mathcal{B}$ is a bornology on a metric space $(X,d)$ with a closed base. Then the flc topology $\tau_{F}$ for $(C(X), \tau_{\mathcal{B}})$ coincides with its weak topology if and only if $\mathcal{B}=\mathcal{F}$,  where $\mathcal{F}$ is the bornology of all finite subsets of $X$.   
 \end{theorem}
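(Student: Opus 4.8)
The plan is to reduce the statement to Theorem~\ref{coincidence of weak and flc topology in tauBs} by showing that, under the hypothesis $\tau_F=\tau_w$, the bornology $\mathcal{B}$ is forced to have a base of compact sets; once that is known, $\mathcal{B}$ is shielded from closed sets, so $\tau_{\mathcal{B}}=\tau_{\mathcal{B}}^s$ on $C(X)$ by Theorem~4.1 in~\cite{Ucucas}, and then the flc topology and the weak topology of $(C(X),\tau_{\mathcal{B}})$ are literally those of $(C(X),\tau_{\mathcal{B}}^s)$, so the conclusion $\mathcal{B}=\mathcal{F}$ comes straight from Theorem~\ref{coincidence of weak and flc topology in tauBs}. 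Thus essentially all of the content lies in the forward implication, and specifically in its first step.

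For that first step I would argue exactly as at the start of the proof of Theorem~\ref{coincidence of weak and flc topology in tauBs}: if $B\in\mathcal{B}$ is closed, then $\rho_B$, with $\rho_B(f)=\sup_{x\in B}|f(x)|$, is a continuous extended seminorm on the elcs $(C(X),\tau_{\mathcal{B}})$, and $C(X)_{fin}^{B}=\{f\in C(X):\rho_B(f)<\infty\}$. Since $\tau_F=\tau_w$, Proposition~\ref{coinicidence of finest and weak topology} gives that $C(X)/C(X)_{fin}^{B}$ is finite dimensional, whence $B$ is compact by Proposition~\ref{codimension of finite subspace with respect to B in tauBs}. As $\mathcal{B}$ has a closed base, every base member is then compact, i.e.\ $\mathcal{B}$ has a compact base.

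Next I would record the small metric-space fact that a bornology with a compact base is shielded from closed sets: a compact set and a disjoint closed set in $(X,d)$ are at positive distance, so each compact base element serves as a shield for every member of $\mathcal{B}$ it contains. Feeding this into Theorem~4.1 in~\cite{Ucucas} gives $\tau_{\mathcal{B}}=\tau_{\mathcal{B}}^s$, and Theorem~\ref{coincidence of weak and flc topology in tauBs} then finishes the forward direction. For the converse, if $\mathcal{B}=\mathcal{F}$ there is nothing subtle: $\tau_{\mathcal{B}}$ is the topology $\tau_p$ of pointwise convergence, which is already locally convex, so $\tau_F=\tau_p$; and $(C(X),\tau_p)$ carries its weak topology because the point evaluations are $\tau_p$-continuous linear functionals generating $\tau_p$. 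Hence $\tau_F=\tau_p=\tau_w$.

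I expect the only genuine obstacle to be the logical routing of the forward implication --- noticing that the hypothesis $\tau_F=\tau_w$ itself produces the compact base that in turn collapses $\tau_{\mathcal{B}}^s$ down to $\tau_{\mathcal{B}}$ --- together with the minor care needed because $\rho_B$ is an honest extended (possibly $\infty$-valued) seminorm, which is exactly what makes Proposition~\ref{coinicidence of finest and weak topology} applicable before compactness of $B$ is available. If one preferred not to pass through $\tau_{\mathcal{B}}^s$, the forward direction can instead be completed in place as in Theorem~\ref{coincidence of weak and flc topology in tauBs}: once each closed $B\in\mathcal{B}$ is compact, $\rho_B$ is a continuous seminorm on $(C(X),\tau_F)=(C(X),\tau_w)$, so its kernel $\{f:f|_B=0\}$ has finite codimension; but by the Tietze extension theorem $C(X)/\{f:f|_B=0\}\cong C(B)$, which is infinite dimensional as soon as $B$ is infinite, forcing every member of $\mathcal{B}$ to be finite.
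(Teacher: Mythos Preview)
Your proposal is correct. The paper's own proof simply says ``similar to the proof of Theorem~\ref{coincidence of weak and flc topology in tauBs}'', meaning one repeats that argument verbatim with $\tau_{\mathcal{B}}$ in place of $\tau_{\mathcal{B}}^s$: first force each closed $B\in\mathcal{B}$ to be compact via Proposition~\ref{coinicidence of finest and weak topology} and Proposition~\ref{codimension of finite subspace with respect to B in tauBs}, then run the total-boundedness argument with the ball $A=\{f:\sup_X|f|\le 1\}$ to conclude $B$ is finite.

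Your main route is a genuine, and neat, variation: instead of replaying the total-boundedness argument, you observe that once every closed member of $\mathcal{B}$ is compact, $\mathcal{B}$ is shielded from closed sets, so $\tau_{\mathcal{B}}=\tau_{\mathcal{B}}^s$ by Theorem~4.1 of \cite{Ucucas}, and then Theorem~\ref{coincidence of weak and flc topology in tauBs} applies verbatim. This buys you a clean reduction at the cost of invoking the shielding machinery. Your alternative route is also different from the paper's and arguably the most efficient of all three: once $\rho_B$ is a continuous seminorm on $(C(X),\tau_w)$, its kernel has finite codimension, while Tietze gives $C(X)/\ker\rho_B\cong C(B)$, forcing $B$ finite. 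This bypasses both the shielding lemma and the total-boundedness step. Either of your routes is a legitimate substitute for the paper's ``repeat the previous proof'' instruction.
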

 \begin{proof} It is similar to the proof of Theorem \ref{coincidence of weak and flc topology in tauBs}.
 \end{proof}
 
 \section{Compactness and equicontinuity}   In this section, we discuss the equicontinuity of subsets of the dual $X^*$ of an elcs $(X, \tau)$ and show that the extended topology $\tau$ is actually induced by the polar of equicontinuous subsets of $X^*$. We also investigate, the classical Banach-Alaoglu Theorem in the extended case. Finally, we show that James Compactness Criterion (Theorem 3.55 in \cite{faaidg}, p. 84), Eberlein-\v{S}mulian Theorem (Theorem  4.47 in \cite{faaidg}, p. 128) for weak compactness also hold for an extended Banach space.      
 
\begin{definition}\normalfont	Let $(X, \tau)$ be an elcs and $A\subseteq X^*$. Then $A$ is \textit{equicontinuous} on $(X, \tau)$ if there exists a neighborhood $U$ of $0$ such that $|f(x)|\leq 1$ for every $f\in A$ and $x\in U$, that is, $A\subseteq U^\circ$.\end{definition} 
 The following points are  easy to verify for  $A\subseteq X^*$ of an elcs $(X, \tau)$.
 \begin{itemize}
 	\item[(1)] $A$ is equicontinuous on $(X, \tau)$ if and only if $A_\circ$ is a neighborhood of $0$ in $(X, \tau)$.
 	\item[(2)] If $A\subseteq B\subseteq X^*$ and $B$ is equicontinuous, then $A$ is also equicontinuous.
 	\item[(3)] If $X$ is an enls, then  $A\subseteq X^*$ is equicontinuous if and only if there exists an $M>0$ such that $\parallel f\parallel_{op}\leq M$ for each $f\in A$. In particular, the closed unit ball $B_{X^*}=\{f\in X^*: \parallel f\parallel_{op}\leq 1\}$ is always equicontinuous on $X$.
 	\item[(4)] If $X$ is an enls and $x\in X\setminus X_{fin}$, then for any $n\in\mathbb{N}$ there exists a linear functional $f_n\in B_{X^*}$ such that $f_n(x)=n$. So $B_{X^*}$ is not pointwise bounded. Hence an equicontinuous family on an elcs may not be pointwise bounded.   
 \end{itemize}

 \begin{theorem}\label{equiconouty and pointwise bounded}	Let $(X, \tau)$ be an elcs with the flc topology $\tau_{F}$, and let $X=X_{fin}\oplus M$. Then the following statements are equivalent.  
 	\begin{itemize} 
 		\item[(1)] Every equicontinuous family $A\subseteq X^*$ on $(X, \tau)$ is equicontinuous on $(X, \tau_{F})$;  
 		\item[(2)] every equicontinuous family $A\subseteq X^*$ on $(X, \tau)$ is pointwise bounded;
 		\item[(3)] $0\in\text{Core}(U)$ for every neighborhood $U$ of $0$;
 		\item[(4)] $X=X_{fin}$, that is, $(X, \tau)$ is a locally convex space.	\end{itemize}\end{theorem}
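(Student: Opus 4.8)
The plan is to prove the equivalences via the cycle $(4)\Rightarrow(1)\Rightarrow(2)\Rightarrow(4)$ together with the separate equivalence $(3)\Leftrightarrow(4)$. The implications $(4)\Rightarrow(1)$ and $(1)\Rightarrow(2)$ are short. If $X=X_{fin}$, then $(X,\tau)$ is itself a locally convex space, so the flc topology $\tau_F$ --- being the finest locally convex topology coarser than $\tau$ --- coincides with $\tau$; hence ``equicontinuous on $(X,\tau)$'' and ``equicontinuous on $(X,\tau_F)$'' mean the same thing, which gives (1). For $(1)\Rightarrow(2)$, let $A\subseteq X^*$ be equicontinuous on $(X,\tau)$; by (1) it is equicontinuous on the locally convex space $(X,\tau_F)$, so $A\subseteq U^\circ$ for some $\tau_F$-neighborhood $U$ of $0$. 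Since every neighborhood of $0$ in a locally convex space is absorbing, each $x\in X$ lies in $\lambda U$ for some $\lambda>0$, whence $|f(x)|\le\lambda$ for all $f\in A$; thus $A$ is pointwise bounded.

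The heart of the matter is $(2)\Rightarrow(4)$, which I would prove by contraposition. Suppose $X\ne X_{fin}$ and choose $x_0\in X\setminus X_{fin}$; by the definition of the finite subspace there is a continuous extended seminorm $\rho$ on $(X,\tau)$ with $\rho(x_0)=\infty$, i.e.\ $x_0\notin X_{fin}^\rho$. The decisive structural fact (recorded in Section 2) is that $X_{fin}^\rho$ is a \emph{clopen} subspace of $X$. Choosing a Hamel basis of $X$ consisting of $x_0$, a basis of $X_{fin}^\rho$, and further vectors, I define a linear functional $f_0$ on $X$ by $f_0(x_0)=1$ and $f_0=0$ on all the other basis vectors, so that $f_0$ vanishes on $X_{fin}^\rho$. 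Then $f_0$ is automatically continuous, because it vanishes on the open neighborhood $X_{fin}^\rho$ of $0$ and hence is continuous at $0$. Now $A:=\{nf_0:n\in\mathbb{N}\}$ is contained in $(X_{fin}^\rho)^\circ$, so $A$ is equicontinuous on $(X,\tau)$; yet $|nf_0(x_0)|=n\to\infty$, so $A$ is not pointwise bounded, contradicting (2). Hence $X=X_{fin}$. I expect this construction to be the main obstacle: it hinges on the genuinely non-classical phenomenon that a neighborhood of $0$ in an elcs --- here $X_{fin}^\rho$ --- need not be absorbing, which is precisely why its polar can fail to be pointwise bounded.

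It remains to handle $(3)\Leftrightarrow(4)$. If $X=X_{fin}$, then $(X,\tau)$ is a locally convex space, in which scalar multiplication is jointly continuous, so for each $x\in X$ and each neighborhood $U$ of $0$ the map $t\mapsto tx$ is continuous at $t=0$; this yields $\delta_x>0$ with $tx\in U$ for $0\le t\le\delta_x$, i.e.\ $0\in\text{Core}(U)$. Conversely, if $X\ne X_{fin}$, take $x_0$ and $\rho$ as above and put $U=\{x\in X:\rho(x)<1\}$, a neighborhood of $0$; since $\rho(tx_0)=t\rho(x_0)=\infty$ for every $t>0$, we get $tx_0\notin U$ for all $t>0$, so $0\notin\text{Core}(U)$ and (3) fails. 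This closes all the implications.
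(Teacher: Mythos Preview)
Your proof is correct. The core device---a continuous linear functional that vanishes on the clopen subspace $X_{fin}^\rho$ but is nonzero at some $x_0$ with $\rho(x_0)=\infty$---is exactly what the paper uses as well, so the substance is the same. The organization differs: the paper runs the single cycle $(4)\Rightarrow(1)\Rightarrow(2)\Rightarrow(3)\Rightarrow(4)$, and in its step $(2)\Rightarrow(3)$ it starts from an \emph{arbitrary} absolutely convex neighborhood $U$ with $0\notin\text{Core}(U)$, takes the equicontinuous set $U^\circ$, and derives a contradiction from pointwise boundedness by producing a single functional $f\in U^\circ$ with $|f(x_0)|>n$. You instead close the loop directly via $(2)\Rightarrow(4)$ and treat $(3)\Leftrightarrow(4)$ separately; for the key step you work with the \emph{specific} open neighborhood $X_{fin}^\rho$ and exhibit the concrete family $\{nf_0\}$ inside its polar. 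Your route is a bit more transparent, since $(X_{fin}^\rho)^\circ$ is visibly the set of functionals vanishing on $X_{fin}^\rho$, which makes equicontinuity of $\{nf_0\}$ immediate; the paper's version has the mild advantage of showing directly that \emph{every} polar $U^\circ$ with $0\notin\text{Core}(U)$ fails pointwise boundedness, which is the content of $(2)\Rightarrow(3)$.
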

 
 \begin{proof}
 	The implications (4)$\Rightarrow$(1)$\Rightarrow$(2) are obvious as every equicontinuous family on a locally convex space is pointwise bounded.
 	
 	(2)$\Rightarrow$(3). Let $U$ be an absolutely convex neighborhood of $0$ in $(X, \tau)$. Then $U\cap X_{fin}$ is a neighborhood of $0$ in the locally convex space $(X_{fin}, \tau|_{X_{fin}})$. So $U\cap X_{fin}$ is an absorbing set in $X_{fin}$. If $0\notin\text{Core}(U)$, then there exists an $x_0\in M$ such that $\alpha x_0\notin U$ for each non-zero scalar $\alpha$. Since $U$ is a neighborhood of $0$ in $(X, \tau)$, $U^\circ$ is equicontinuous  on $(X, \tau)$. By our assumptions, $U^\circ$ is pointwise bounded. So there exists an $n\in\mathbb{N}$ such that $U^\circ \subseteq n \{x_0\}^\circ$. Now, let $Z$ be a subspace of $X$ such that $X=Z\oplus \text{ span }\{x_0\}$. Since $x_0\in M$, there exists a  continuous extended seminorm $\rho$ on $(X, \tau)$ such that $\rho(x_0)=\infty$. So $X_{fin}^\rho\subseteq Z$. Then the linear functional defined by $f(x_z+\alpha x_0)=(n+1)\alpha$ for $x_z\in Z$ is in $X^*$ and $f\in U^\circ \setminus \left( n \{x_0\}^\circ\right)$. We arrive at a contradiction.
 	
 	(3)$\Rightarrow$(4). If $\rho$ is a continuous extended seminorm on $(X, \tau)$, then $\rho^{-1}([0, 1])$ is a neighborhood of $0$. By our assumption, $0\in\text{Core}\left(\rho^{-1}([0, 1])\right)$. Therefore $X_{fin}^\rho=X$. Hence $X_{fin}=X$. 
 \end{proof}
 
 From Theorem \ref{equiconouty and pointwise bounded}, it is clear that an equicontinuous family $A \text{ of } X^*$ need not be pointwise bounded. Therefore the map  $\rho_{A}(x)=\sup_{f\in A}|f(x)| \text{ for } x\in X$ may not be a seminorm on $X$ but it will always be an extended seminorm. 
 \begin{theorem}\label{equicontinuitous family generates extended topology} Let $(X, \tau)$ be an elcs with the flc topology $\tau_{F}$. Then $\tau$ is induced by the collection $\{\rho_A: A\subseteq X^* \text{ is equiconinuous on } (X, \tau)\}$ of extended seminorms.\end{theorem}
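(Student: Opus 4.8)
The plan is to show that the topology $\tau'$ induced on $X$ by $\mathcal{E}=\{\rho_A : A\subseteq X^* \text{ equicontinuous on }(X,\tau)\}$ coincides with $\tau$, by establishing the two inclusions separately. The inclusion $\tau'\subseteq\tau$ is the routine one: each $\rho_A$ is an extended seminorm, being a pointwise supremum of the seminorms $x\mapsto|f(x)|$; and if $A$ is equicontinuous we may fix an absolutely convex neighborhood $U$ of $0$ with $A\subseteq U^\circ$, and then $\rho_A\le\mu_U$ (write $x=\lambda u$ with $u\in U$ to get $|f(x)|\le\lambda$ for all $f\in A$). Since $\mu_U$ is a continuous extended seminorm, $\rho_A$ is continuous at $0$, hence $\rho_A\in S(X,\tau)$; so every basic $\tau'$-neighborhood of $0$ is a $\tau$-neighborhood of $0$, and $\tau'\subseteq\tau$ follows.

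For $\tau\subseteq\tau'$ I would first reduce to a statement about single seminorms. The Minkowski functionals of a base of absolutely convex $\tau$-neighborhoods of $0$ lie in $S(X,\tau)$, and every member of $S(X,\tau)$ is $\tau$-continuous; hence $\tau$ is precisely the topology induced by the family $S(X,\tau)$. Therefore it suffices to prove that each $\rho\in S(X,\tau)$ is $\tau'$-continuous, and for that it is enough to produce an equicontinuous $A\subseteq X^*$ with $\rho_A=\rho$. I would take $A:=U_\rho^\circ$, where $U_\rho:=\{x\in X:\rho(x)\le1\}$; this set is absolutely convex and contains the open set $\{\rho<1\}$, so it is a neighborhood of $0$ and $A$ is equicontinuous on $(X,\tau)$. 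One half, $\rho_A\le\rho$, is immediate: if $\rho(x)<\lambda$ then $x/\lambda\in U_\rho$, so $|f(x)|\le\lambda$ for every $f\in A$.

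The substance of the proof is the reverse inequality $\rho_A\ge\rho$, and here I would argue by hand rather than via a bipolar theorem (which is unavailable since $(X,\tau)$ need not be locally convex), splitting into cases according to whether $\rho(x)$ is finite. If $\rho(x)<\infty$, then $x$ lies in the subspace $X_{fin}^\rho$, on which $\rho$ is an ordinary seminorm; by Hahn--Banach there is a $\mathbb{K}$-linear $g$ on $X_{fin}^\rho$ with $|g|\le\rho$ there and $g(x)=\rho(x)$. Extending $g$ by zero on an algebraic complement of $X_{fin}^\rho$ in $X$ gives a linear functional $f$ with $|f|\le\rho$ on all of $X$ --- outside $X_{fin}^\rho$ one has $\rho\equiv\infty$ because $X_{fin}^\rho$ is a subspace --- so $f\in X^*$, $f\in U_\rho^\circ$, and $|f(x)|=\rho(x)$. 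If $\rho(x)=\infty$, then $x\notin X_{fin}^\rho$; choose a subspace $Z\supseteq X_{fin}^\rho$ with $X=Z\oplus\text{span}\{x\}$ and, for each $n\in\mathbb{N}$, define $f_n(z+\alpha x)=n\alpha$. Any vector with $\alpha\ne0$ lies outside $X_{fin}^\rho$, so $|f_n|\le\rho$ and $f_n\in X^*$; moreover $f_n$ vanishes on $X_{fin}^\rho\supseteq U_\rho$, so $f_n\in U_\rho^\circ$, while $|f_n(x)|=n$. In either case $\rho_A(x)\ge\rho(x)$, so $\rho_A=\rho$, each $\rho\in S(X,\tau)$ is $\tau'$-continuous, and hence $\tau\subseteq\tau'$; combined with the first inclusion this gives $\tau=\tau'$.

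I expect the main obstacle to be exactly this construction of functionals realizing the value $\rho(x)$: one must manufacture an equicontinuous family that is nonetheless ``unbounded enough'' to recover $\rho(x)$ when $\rho(x)=\infty$, and the mechanism that makes this possible --- that $X_{fin}^\rho$ is a proper subspace precisely when $\rho$ assumes the value $\infty$, which simultaneously supplies room to build large functionals on the complement and forces them to vanish on $U_\rho$ so that they stay in $U_\rho^\circ$ --- is what has to be spotted. The flc topology $\tau_F$, despite appearing in the statement, is not needed in the argument.
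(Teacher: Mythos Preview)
Your proof is correct and takes a genuinely different route from the paper. For the key inequality $\rho\le\rho_{U_\rho^\circ}$, the paper works through the flc topology: it shows that $V=\rho^{-1}([0,1])$ is $\tau_F$-closed (by constructing an auxiliary seminorm $\mu$ dominated by $\rho$ that separates any $z\notin V$ from $V$), and then invokes the Bipolar Theorem in the locally convex space $(X,\tau_F)$ to get $(V^\circ)_\circ=V$, from which $\rho\le\rho_{V^\circ}$ follows. You instead bypass $\tau_F$ entirely and produce the witnessing functionals by hand via Hahn--Banach, splitting on whether $\rho(x)$ is finite. Your remark that Bipolar is ``unavailable'' is not quite accurate --- the paper shows exactly how to make it available by passing to $\tau_F$ --- but your observation that the argument can be made without ever mentioning $\tau_F$ is correct and worth noting. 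What your approach buys is self-containment and a slightly stronger conclusion (you get $\rho_A=\rho$, not just $\rho\le\rho_A$); what the paper's approach buys is a cleaner connection to the duality machinery used elsewhere, at the cost of the auxiliary closedness lemma.
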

 
 \begin{proof}
 	Suppose $A\subseteq X^*$ is equicontinuous on $(X, \tau)$. Then there exists a neighborhood $U$ of $0$ in $(X, \tau)$ such that $A\subseteq U^\circ$. Since $U\subseteq (U^\circ)_\circ$ and $A_\circ\subseteq \rho_A^{-1}([0, 1])$, we have $U\subseteq \rho_A^{-1}([0, 1])$. Hence $\rho_A^{-1}([0, 1])$ is a neighborhood of $0$ in $(X, \tau)$. Consequently, $\rho_A$ is continuous on $(X, \tau)$.
 	
 	Conversely, suppose $\rho$ is a continuous extended seminorm on $(X, \tau)$ and $V= \rho^{-1}([0, 1])$. Then there exists a subspace $M$ of $X$ such that  $X=X_{fin}^\rho\oplus M$. Let $z\notin V$, and let $z=z_F+z_M$ for $z_F\in X_{fin}^\rho$ and $z_M\in M$. We show that $z\notin\text{ Cl}_{\tau_F} (V)$. Define a seminorm $\mu$ on $X$ by $\mu(x)=\rho(x_M)+\nu(x_M)$ for $x\in X$, where $x=x_F+x_M$ for $x_F\in X_{fin}^\rho$, $x_M\in M$, and $\nu$ is seminorm on $M$ such that $\nu(z_M)=2$ whenever $z_m\neq 0$. Since $\mu(x)\leq \rho(x)$ for $x\in X$, $\mu$ is continuous on $(X, \tau)$. By Theorem 3.5 in \cite{flctopology}, $\mu$ is continuous on $(X, \tau_F)$. So $z\notin Cl_{\tau_F}(V)$ as $\mu^{-1}([0, 1])$ is closed in $(X, \tau_{F})$ with $V\subseteq \mu^{-1}([0, 1])$ and $\mu(z)>1$. Hence $V$ is closed in $(X, \tau_F)$. So by Bipolar Theorem (Theorem 8.3.8 in \cite{tvsnarici}, p. 235), $(V^\circ)_\circ=V$. It is easy to see that if $\rho_{V^\circ}(x)=k$ for $k>0$ and $x\in X$, then $\frac{1}{k}x\in (V^\circ)_\circ=V$. Consequently, $\rho(x)\leq k$. Therefore  $\rho(x)\leq \rho_{V^\circ}(x)$ for $x\in X$. Hence $\tau$ is induced by the collection $\{\rho_A: A\subseteq X^* \text{ is equiconinuous on } (X, \tau)\}$.            \end{proof} 	
 \begin{theorem}\label{equicontinuitous family generates finest topology}
 	Let $(X, \tau)$ be an elcs with the flc topology $\tau_{F}$. Then $\tau_{F}$ is induced by the collection  $\mathcal{A}=\{\rho_A : A\subseteq X^* \text{ is equiconinuous on } (X, \tau) \text{ and } 0\in \text{Core}(A_\circ) \}$. \end{theorem}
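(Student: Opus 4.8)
The plan is to show that $\mathcal{A}$ is \emph{exactly} the family of continuous seminorms of the locally convex space $(X,\tau_F)$; since any locally convex topology is induced by the family of all of its continuous seminorms, this will yield that $\tau_F$ is induced by $\mathcal{A}$. The identity that drives the whole argument is $\rho_A=\mu_{A_\circ}$ for any $A\subseteq X^*$: from $A_\circ=\{x\in X:\rho_A(x)\le 1\}$ one computes, using the conventions on $\infty$ and homogeneity of the extended seminorm $\rho_A$, that $\mu_{A_\circ}(x)=\inf\{\lambda>0:\rho_A(x)\le\lambda\}=\rho_A(x)$. In particular each $\rho_A$ is the Minkowski functional of the absolutely convex set $A_\circ$, and by the recorded fact that a Minkowski functional $\mu_U$ is a (finite-valued) seminorm precisely when $0\in\text{Core}(U)$, the hypothesis $0\in\text{Core}(A_\circ)$ appearing in the definition of $\mathcal{A}$ is exactly what makes each $\rho_A\in\mathcal{A}$ a genuine seminorm rather than merely an extended seminorm.

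First I would check that each $\rho_A\in\mathcal{A}$ is continuous on $(X,\tau_F)$, so that $\mathcal{A}$ induces a topology coarser than $\tau_F$. Indeed, equicontinuity of $A$ on $(X,\tau)$ means $A_\circ$ is a $\tau$-neighborhood of $0$, hence $\mu_{A_\circ}=\rho_A$ is continuous on $(X,\tau)$ by the continuity criterion for Minkowski functionals, and it is a seminorm by the remark above; Theorem 3.5 in \cite{flctopology} then promotes it to a continuous seminorm on $(X,\tau_F)$.

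For the reverse inclusion it suffices to show that every continuous seminorm $\rho$ of $(X,\tau_F)$ lies in $\mathcal{A}$. Set $V=\rho^{-1}([0,1])$. Then $V$ is absolutely convex, it is a neighborhood of $0$ in $(X,\tau_F)$ and hence in $(X,\tau)$, it is $\tau_F$-closed and therefore $\sigma(X,X^*)$-closed, and $0\in\text{Core}(V)$ because $\rho$ is finite-valued. Put $A=V^\circ\subseteq X^*$. Exactly as in the proof of Theorem \ref{equicontinuitous family generates extended topology}, the Bipolar Theorem gives $A_\circ=(V^\circ)_\circ=V$. Consequently $A_\circ=V$ is a $\tau$-neighborhood of $0$, so $A$ is equicontinuous on $(X,\tau)$; moreover $0\in\text{Core}(A_\circ)$, and $\rho_A=\mu_{A_\circ}=\mu_V=\rho$. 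Hence $\rho=\rho_A\in\mathcal{A}$.

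Putting the two inclusions together, $\mathcal{A}$ coincides with the set of all $\tau_F$-continuous seminorms, which induces $\tau_F$, completing the proof. The argument is essentially bookkeeping on top of Theorem \ref{equicontinuitous family generates extended topology} and the Minkowski-functional facts of Section 2; the only point requiring a moment's care is the applicability of the Bipolar Theorem, i.e. that $V$ is closed in $\sigma(X,X^*)$, which follows at once from $\tau_F$-continuity of $\rho$ together with the fact that closed convex sets in a locally convex space are weakly closed.
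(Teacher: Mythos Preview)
Your argument is correct and follows essentially the same route as the paper's own proof: both directions rest on the Bipolar identity $(V^\circ)_\circ=V$ for an absolutely convex $\tau_F$-closed neighborhood $V$ of $0$, together with the passage from $\tau$-continuous seminorms to $\tau_F$-continuous seminorms via Theorem~3.5 of \cite{flctopology}. The one organizational difference is that you package everything through the clean identity $\rho_A=\mu_{A_\circ}$ and deduce the stronger statement that $\mathcal{A}$ is \emph{exactly} the family of $\tau_F$-continuous seminorms, whereas the paper compares the induced topology $\tau'$ with $\tau_F$ at the level of neighborhood bases; this is a presentational refinement rather than a different method.
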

 \begin{proof}
 	First, we show that each $\rho_A\in\mathcal{A}$ is a seminorm on $X$. If $A\subseteq X^*$ is equicontinuous on $(X, \tau)$ and $0\in \text{Core}(A_\circ)$, then for every $x\in X$, there exists a $t>0$ such that $tx\in A_\circ$. Which implies that $\rho_{A}(x)\leq \frac{1}{t}$. Consequently, $\rho_{A}$ is a seminorm on $X$.
 	
 	Suppose $\tau'$ is the topology induced by the collection $\mathcal{A}$. Then $(X, \tau')$ is a locally convex space. By Theorem \ref{equicontinuitous family generates extended topology}, $\tau'\subseteq \tau$. Therefore $\tau'\subseteq \tau_F$. 
 	
 	Conversely, let $V$ be an absolutely convex, absorbing and closed neighborhood of $0$ in $(X, \tau_F)$. Then $V^\circ$ is equicontinuous on $(X, \tau)$ as $\tau_F\subseteq \tau$. Since $V$ is absorbing and $V\subseteq \left(V^\circ\right)_\circ$, $0\in \text{Core}\left(\left(V^\circ\right)_\circ\right)$. Consequently, $\rho_{V^\circ}$ is a continuous seminorm on $(X, \tau')$. By Bipolar Theorem, $(V^\circ)_\circ=V$. Then $\rho_{V^\circ}^{-1}([0, 1))\subseteq \left(V^\circ\right)_\circ=V$. So $V$ is a neighborhood of $0$ in $(X, \tau')$. Hence $\tau_F\subseteq \tau'$. \end{proof}	
 
 We conclude this section by considering some classical theorems on compactness in weak and weak$^*$ topology for an elcs. In general, the polar of a neighborhood of $0$ in an elcs may not be weak$^*$ compact.
 \begin{example}
 	Let $X$ be a nonzero vector space with the discrete extended norm $\parallel\cdot\parallel_{0, \infty}$. Then $B_{X}=\{0\}$. So $(B_{X})^\circ=X^*$ cannot be weak$^*$ compact.
 \end{example}
 \begin{theorem}\label{Alaoglu theorem} {\normalfont(Banach-Alaoglu Theorem)}	Suppose $(X, \tau)$ is an elcs with the flc topology $\tau_{F}$. If $U$ is an absolutely convex neighborhood of $0$ in $(X, \tau)$, then the following statements are equivalent.
 	\begin{itemize}
 		\item[(1)] $U^\circ$ is weak$^*$ compact;
 		\item[(2)] $0\in\text{Core}(U)$;
 		\item[(3)] $U$ is a neighborhood of $0$ in $(X, \tau_F)$\end{itemize}\end{theorem}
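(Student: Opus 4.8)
The plan is to close the cycle $(2)\Leftrightarrow(3)$, $(3)\Rightarrow(1)$, $(1)\Rightarrow(2)$, with the last implication carrying all the weight. The equivalence $(2)\Leftrightarrow(3)$ is routine bookkeeping with the Minkowski functional $\mu_U$. If $0\in\text{Core}(U)$, then $\mu_U$ is a finite-valued seminorm by the first of the facts listed after Definition~\ref{Minkowski funcitonal}, it is $\tau$-continuous because $U$ is a $\tau$-neighborhood of $0$, hence continuous on $(X,\tau_F)$ by Theorem~3.5 in \cite{flctopology}; then the inclusion $\{x\in X:\mu_U(x)<1\}\subseteq U$ recorded after Definition~\ref{Minkowski funcitonal} exhibits $U$ as a $\tau_F$-neighborhood of $0$. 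Conversely, since $(X,\tau_F)$ is a locally convex space, a $\tau_F$-neighborhood of $0$ is absorbing, and for an absorbing absolutely convex $U$ one gets $0\in\text{Core}(U)$ by choosing, for $x\in X$, a $\lambda>0$ with $x\in\lambda U$ and observing that $tx=(t\lambda)\frac{1}{\lambda}x\in U$ for $0\le t\le\frac{1}{\lambda}$ by balancedness. For $(3)\Rightarrow(1)$: by the classical Banach-Alaoglu theorem, applied to the locally convex space $(X,\tau_F)$, the polar $U^\circ$ of $U$ is $\sigma(X^*,X)$-compact; since $(X,\tau_F)^*=X^*$ and $\sigma(X^*,X)$ coincides with $\tau_{w^*}$ on $X^*$ (Remark~\ref{finite space have same weak  and weak$^*$ topology}), this gives $(1)$.

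The crux is $(1)\Rightarrow(2)$, which I would prove by contraposition. Suppose $0\notin\text{Core}(U)$; then some $x_0\in X$ satisfies $\mu_U(x_0)=\infty$, that is, $x_0\notin X_{fin}^U$. As $X_{fin}^U$ is a linear subspace, $\mathbb{K}x_0\cap X_{fin}^U=\{0\}$, so we may decompose $X=X_{fin}^U\oplus\mathbb{K}x_0\oplus N$ for some subspace $N$ and define a linear functional $\ell$ on $X$ by $\ell(y+\alpha x_0+z)=\alpha$ for $y\in X_{fin}^U$ and $z\in N$. The decisive observation is that $|\ell(x)|\le\mu_U(x)$ for every $x\in X$: if $\mu_U(x)<\infty$ then $x\in X_{fin}^U$, hence $\ell(x)=0$, and otherwise the inequality is vacuous. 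Because $\mu_U$ is a continuous extended seminorm on $(X,\tau)$, this domination forces $\ell$ to be $\tau$-continuous, so $\ell\in X^*$. Moreover $U\subseteq\{x\in X:\mu_U(x)\le1\}\subseteq X_{fin}^U$, so $\ell$ vanishes identically on $U$, whence $n\ell\in U^\circ$ for every $n\in\mathbb{N}$. But the evaluation map $f\mapsto f(x_0)$ is weak$^*$ continuous and $(n\ell)(x_0)=n$, so its image on $U^\circ$ is an unbounded subset of $\mathbb{K}$; therefore $U^\circ$ is not weak$^*$ compact, and $(1)$ fails.

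I expect $(1)\Rightarrow(2)$ to be the only genuine obstacle, since $(2)\Leftrightarrow(3)$ and $(3)\Rightarrow(1)$ follow at once from the tools already in place. In the hard direction the two points requiring care are that $\ell$ really lands in $X^*$ --- handled by the bound $|\ell|\le\mu_U$ together with continuity of $\mu_U$ --- and that $\ell$ can be made to vanish on all of $U$ while staying nonzero in the $x_0$-direction --- handled by splitting $X$ along the subspace $X_{fin}^U$, which contains $U$. This is precisely the failure of joint continuity of scalar multiplication, already visible in the example immediately preceding the statement, where $U=B_X=\{0\}$ and $U^\circ=X^*$ is plainly not weak$^*$ compact. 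One could instead route $(1)\Rightarrow(2)$ through the bipolar theorem, using the identity $\mu_{(U^\circ)_\circ}(x_0)=\sup_{f\in U^\circ}|f(x_0)|$, but the explicit construction above is shorter.
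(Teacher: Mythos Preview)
Your proof is correct and follows essentially the same route as the paper: the implications $(2)\Rightarrow(3)$ and $(3)\Rightarrow(1)$ are handled identically, and for $(1)\Rightarrow(2)$ both you and the paper argue by contraposition, producing a continuous linear functional that vanishes on $U$ but is nonzero at some $x_0$ with $\mu_U(x_0)=\infty$, thereby witnessing that $U^\circ$ is not pointwise bounded. The only cosmetic difference is that your construction is self-contained and works directly with the subspace $X_{fin}^U$, whereas the paper routes the argument through the earlier Theorem~\ref{equiconouty and pointwise bounded} and a decomposition along $X_{fin}^\rho$ for an auxiliary extended seminorm $\rho$; the underlying idea is the same.
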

 \begin{proof}
 	(3)$\Rightarrow$(1). Follows from the Classical Alaoglu Theorem (Theorem 8.4.1 in \cite{tvsnarici}).
 	
 	(1)$\Rightarrow$(2). Since $U$ is a neighborhood of $0$ in $(X, \tau)$, $U^\circ$ is equicontinuous on $(X, \tau)$. If $U^\circ$ is weak$^*$ compact, then it is pointwise bounded. So $0\in \text{Core}(U)$ (see proof of (2)$\Rightarrow$(3) in Theorem \ref{equiconouty and pointwise bounded}).
 	
 	(2)$\Rightarrow$(3). If $0\in\text{Core}(U)$, then the Minkowski's functional $\mu_{U}$ is a continuous seminorm on $(X, \tau)$. By Theorem 3.5 in \cite{flctopology}, $\mu_{U}$ is a continuous seminorm on $(X, \tau_F)$. Hence $U$ is a neighborhood of $0$ in $(X, \tau_F)$. \end{proof}
 
 \begin{corollary}\label{Banach Alaoglu Theorem in an enls}
 	Suppose $(X, \parallel\cdot\parallel)$ is an elcs. Then the closed unit ball $B_{X^*}$ in $X^*$ is weak$^*$ compact if and only if $X$ is a normed linear space, that is, $X=X_{fin}$.\end{corollary}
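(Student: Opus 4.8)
The plan is to deduce the corollary directly from the Banach--Alaoglu Theorem (Theorem \ref{Alaoglu theorem}) by choosing the neighborhood $U$ to be the closed unit ball $B_X=\{x\in X:\parallel x\parallel\leq 1\}$. First I would record that $B_X$ is an absolutely convex neighborhood of $0$ in $(X,\parallel\cdot\parallel)$, since it contains the open set $\{x:\parallel x\parallel<1\}$, and that its polar is exactly the closed unit ball of the dual: $B_X^{\circ}=\{f\in X^*:|f(x)|\leq 1 \text{ whenever } \parallel x\parallel\leq 1\}=\{f\in X^*:\parallel f\parallel_{op}\leq 1\}=B_{X^*}$, using the description of equicontinuous sets in an enls noted after the definition of equicontinuity.

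Next I would apply Theorem \ref{Alaoglu theorem} to $U=B_X$: the three conditions there become, respectively, ``$B_{X^*}$ is weak$^*$ compact'', ``$0\in\mathrm{Core}(B_X)$'', and ``$B_X$ is a neighborhood of $0$ in $(X,\tau_F)$''. So it remains only to translate the Core condition. I claim $0\in\mathrm{Core}(B_X)$ if and only if $X=X_{fin}$. Indeed, if $x\in X\setminus X_{fin}$, then $\parallel x\parallel=\infty$, hence $\parallel tx\parallel=t\parallel x\parallel=\infty>1$ for every $t>0$, so no $\delta_x>0$ with $tx\in B_X$ for $0\le t\le\delta_x$ can exist; conversely, if $\parallel x\parallel<\infty$ for all $x$, then for each $x$ one may take $\delta_x=1/(1+\parallel x\parallel)$, giving $0\in\mathrm{Core}(B_X)$.

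Finally I would observe that $X=X_{fin}$ is precisely the statement that the extended norm $\parallel\cdot\parallel$ is finite-valued, i.e.\ that $(X,\parallel\cdot\parallel)$ is a genuine normed linear space, which closes the equivalence. The argument is essentially routine given Theorem \ref{Alaoglu theorem}; the only mild point requiring care is the identification $B_X^{\circ}=B_{X^*}$ together with the elementary but slightly awkward behaviour of $\mathrm{Core}$ against the $\infty$-value of the extended norm, and this is where I would be most careful to write things out.
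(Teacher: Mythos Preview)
Your proposal is correct and follows essentially the same approach as the paper: the paper's proof simply records that $(B_X)^\circ=B_{X^*}$ and invokes Theorem~\ref{Alaoglu theorem}, which is exactly your plan. You have merely spelled out in detail the step the paper leaves implicit, namely that $0\in\mathrm{Core}(B_X)$ if and only if $X=X_{fin}$.
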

 \begin{proof} It follows from the fact $(B_X)^\circ=B_{X^*}$ and Theorem \ref{Alaoglu theorem}, where $B_X$ is the closed unit ball in $(X, \parallel\cdot\parallel)$. 	\end{proof}
 
 \begin{theorem}\label{eberlin theorem}{\normalfont(Eberlein-\v{S}mulian Theorem, and James Compactness Criterion )} Let $(X, \parallel\cdot\parallel)$ be an extended Banach space with $X=X_{fin}\oplus M$, and let $A\subseteq X$ be weakly closed. Consider the following statements
 	\begin{itemize}
 		\item[(1)] $A$ is weakly compact;
 		\item[(2)] $A$ is weakly sequentially compact (every sequence in $A$ has a weakly convergent subsequence);
 		\item[(3)] $A$ is weakly countably compact (every countable infinite subset of $A$ has a cluster point in its weak topology);
 		\item[(4)] for every $f\in X^*$, there is an $x_0\in A$ such that $|f(x_0)|=\sup_{x\in A}|f(x)|$.
 	\end{itemize} Then $(1)\Leftrightarrow(2)\Leftrightarrow(3)\Rightarrow(4)$. In particular, if $A$ is convex, then $(4)\Rightarrow(1)$.\end{theorem}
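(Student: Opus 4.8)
The plan is to show that each of (1)--(4) already forces $A$ into a genuine Banach subspace of $X$ on which the inherited weak topology is the usual one, and then to invoke the classical Eberlein--\v{S}mulian and James theorems there. Throughout I would use that $(X,\tau)$ and $(X,\tau_F)$ have the same dual $X^*$, so that $\tau_w=\sigma(X,X^*)$ is also the weak topology of the locally convex space $(X,\tau_F)$, and that in the proof of Theorem \ref{metrizability of B_{X^*}} it is shown that $(X,\tau_F)$ is topologically isomorphic to $(X_{fin},\parallel\cdot\parallel)\times(M,\tau_F|_M)$; in particular the projections $\pi_{fin}\colon X\to X_{fin}$ and $\pi_M\colon X\to M$ along $X=X_{fin}\oplus M$ are continuous, hence weakly continuous.

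\emph{Step 1 (confining $A$ to a Banach subspace).} First I would check that each of (1), (2), (3), (4) implies $A$ is weakly bounded: this is clear for (1) (weakly compact sets are bounded) and for (4) (it asserts $\sup_{x\in A}|f(x)|<\infty$ for all $f\in X^*$), and for (2), (3) it follows because along a weakly convergent sequence, or at a weak cluster point of a countably infinite subset, every $f\in X^*$ stays finite. Hence every $f\in X^*$ is bounded on $A$. Next, for any linear functional $\ell$ on $M$, the seminorm $x\mapsto|\ell(\pi_M(x))|$ vanishes on the clopen subspace $X_{fin}$, so it is continuous on $(X,\parallel\cdot\parallel)$ and, by Theorem 3.5 of \cite{flctopology}, on $(X,\tau_F)$; thus $\ell\circ\pi_M\in X^*$, and therefore $\ell$ is bounded on $\pi_M(A)$. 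Since every linear functional on $M$ is bounded on $\pi_M(A)$, the set $\pi_M(A)$ must lie in a finite-dimensional subspace (a linearly independent sequence inside a set always supports an unbounded functional), so $N:=\text{span}(\pi_M(A))$ is finite dimensional and $A\subseteq Y:=X_{fin}\oplus N$. By the isomorphism above, $\tau_F|_Y$ is the product of $\parallel\cdot\parallel$ on $X_{fin}$ with the Euclidean topology on $N$, so $Y$ is a Banach space ($X_{fin}$ is complete, being clopen in the complete space $X$, and $N$ is finite dimensional). Finally, every $h\in Y^*$ extends to some $f\in X^*$ (extend $h$ by $0$ on a linear complement of $N$ in $M$, again using that the resulting seminorm vanishes on $X_{fin}$), so the restriction map $X^*\to Y^*$ is onto; hence $\tau_w|_Y=\sigma(Y,Y^*)$, and $A$ is a weakly closed, bounded (and, when relevant, convex) subset of the Banach space $Y$.

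\emph{Step 2 (the classical theorems inside $Y$).} With $A$ weakly closed and bounded in the Banach space $Y$ and $\tau_w|_Y=\sigma(Y,Y^*)$, the equivalences $(1)\Leftrightarrow(2)\Leftrightarrow(3)$ are immediate from the classical Eberlein--\v{S}mulian Theorem applied in $Y$ (since $A$ is weakly closed, a weak cluster point or a weak sequential limit of a sequence from $A$ lies in $A$). For $(1)\Rightarrow(4)$: for each $f\in X^*$ the map $x\mapsto|f(x)|$ is weakly continuous, hence attains its supremum on the weakly compact set $A$. For $(4)\Rightarrow(1)$ when $A$ is convex: by Step 1 every $h\in Y^*$ is the restriction of some $f\in X^*$, so by (4) the supremum of $|h|$ over $A$ is attained; as $A$ is a bounded, convex, weakly closed subset of the Banach space $Y$, the classical James Compactness Criterion (Theorem 3.55 in \cite{faaidg}) yields that $A$ is weakly compact in $Y$, hence in $X$.

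The main obstacle is Step 1: one has to see that mere weak boundedness already traps the ``non-finite part'' $\pi_M(A)$ inside a finite-dimensional subspace, and that the space $Y=X_{fin}\oplus N$ thereby obtained is genuinely a Banach space carrying exactly the weak topology inherited from $X$. Once this reduction is in hand, Step 2 is essentially bookkeeping on top of the classical Eberlein--\v{S}mulian and James theorems. A secondary point worth a remark is the absolute-value form of condition (4) versus the ``attains its supremum'' form in James's theorem; this is the reason convexity of $A$ is imposed for the implication $(4)\Rightarrow(1)$.
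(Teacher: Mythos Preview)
Your argument is correct and follows essentially the same strategy as the paper: both confine $A$ to a subspace $Y=X_{fin}\oplus N$ with $N$ finite-dimensional, identify $(Y,\tau_F|_Y)$ as a genuine Banach space on which the inherited weak topology is $\sigma(Y,Y^*)$, and then invoke the classical Eberlein--\v{S}mulian and James theorems there. The only differences are cosmetic---the paper builds the Banach norm on $Y$ explicitly as $\trinorm x\trinorm=\parallel x_F\parallel+\mu(x_N)$ and cites \cite{spoens} and \cite{flctopology} to identify it with $\tau_F|_Y$, whereas you reach the same conclusion via the product isomorphism established in the proof of Theorem~\ref{metrizability of B_{X^*}}; one small slip worth fixing is your parenthetical claim that the extension of $h\in Y^*$ to $X$ ``vanishes on $X_{fin}$''---it need not, since $h|_{X_{fin}}$ may be nonzero, but continuity still follows because the extension restricted to the open subspace $X_{fin}$ equals the $\parallel\cdot\parallel$-continuous functional $h|_{X_{fin}}$.
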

 
 \begin{proof} Suppose $A$ satisfy any one of the given statements. Then $\sup_{x\in A}|f(x)|$ is finite for every $f\in X^*$. We show that $A\subseteq X_{fin}\oplus\text{span}(F)$ for a finite set $F$ in $M$. Suppose $(x_n)$ is a sequence of linearly independent elements in $M$ such that $y_n+x_n\in A$ for some $y_n\in X_{fin}$. Then there exists a $g\in X^*$ such that $g(x_n)=n$ for $n\in \mathbb{N}$ and $g(X_{fin})=0$. So $\sup_{x\in A}|g(x)|$ is not finite. We arrive at a contradiction. 
 	
 Let $Y=X_{fin}\oplus Z$, where $Z=\text{span}(F)$. Then $Y_{fin}=X_{fin}$. By Proposition 3.11 in \cite{nwiv}, $(Y, \parallel\cdot\parallel)$ is an extended Banach space.	Consider the norm $\trinorm\cdot\trinorm$ on $Y$ defined by $\trinorm x \trinorm =  \parallel x_F \parallel + \mu(x_Z)$ for $x\in Y$, where $x=x_F+x_Z$ for $x_F\in X_{fin}$, $x_Z\in Z$, and $\mu$ is any norm on $Z$. Then $(Y, \trinorm\cdot\trinorm)$ is a Banach space as both $(X_{fin}, \parallel\cdot\parallel)$ and $(Z, \mu)$ are Banach spaces. Since  $\trinorm x\trinorm=\parallel x\parallel$ for  $x\in Y_{fin}$, $\trinorm\cdot\trinorm$ is a continuous norm on $(Y, \parallel\cdot\parallel)$. Therefore by Corollary 1 in \cite{spoens}, the flc topology $\tau_{F_Y}$ for $(Y, \parallel\cdot\parallel)$ is induced by the norm $\trinorm \cdot\trinorm$.

 It follows from Theorem 4.1 in \cite{flctopology} that  $\tau_{F_Y}$ is equal to $\tau_{F}|_Y$.  Note that the weak topology for $(Y, \parallel\cdot\parallel)$ is equal to the subspace topology of the weak topology for  $(X, \parallel\cdot\parallel)$. Since $(Y, \tau_{F_Y})$ and $(Y, \parallel\cdot\parallel)$ have the same weak topology, both the spaces $(Y, \parallel\cdot\parallel)$ and $(Y, \trinorm\cdot\trinorm)$ also have the same weak topology. Therefore $A$ is weakly closed in $(Y, \trinorm\cdot\trinorm)$.  
 	
 The equivalences $(1)\Leftrightarrow(2)\Leftrightarrow(3)$ now follow by applying Theorem 4.47 in \cite{faaidg}, p. 128 on the space $(Y, \trinorm\cdot\trinorm)$ constructed above. 
 	
 $(1)\Rightarrow (4)$ is obvious as $f(A)$ is compact in $\mathbb{K}$ for every $f\in X^*$.
 
 In particular, if $A$ is convex and for every $f\in X^*$ there is a $x_0\in A$ such that $|f(x_0)|=\sup_{x\in A}|f(x)|$, then by Theorem 3.55 in \cite{faaidg}, p. 84, $A$ is  weakly compact in $(Y, \trinorm\cdot\trinorm)$. Therefore $A$ is weakly compact in $(X, \parallel\cdot\parallel)$. 	\end{proof}

\section{The topology of uniform convergence on bounded sets}

Recall that the generalization of the operator norm topology in a locally convex space is the \textit{strong topology} $\tau_s$ which is the topology of uniform convergence on bounded subsets of that locally convex space. In the case of an enls $X$, the topology of uniform convergence on bounded subsets of $X$ has been studied  in \cite{spoens}. This topology is denoted by $\tau_{ucb}$. 

In this section, we define and study the topology $\tau_{ucb}$ of uniform convergence on bounded subsets of an elcs $(X, \tau)$. We also find conditions for the metrizability and normablility of $(X^*,\tau_{ucb})$. We start with the following definition. 

\begin{definition}{\normalfont(\cite{flctopology})}\label{bounded set in an elcs} \normalfont	Suppose $(X, \tau)$ is an elcs. Then $A\subseteq X$ is said to be \textit{bounded} in $(X, \tau)$ if for every neighborhood $U$ of $0$, there exist $r>0$ and a finite set $F\subseteq A$ such that $A\subseteq F+rU$. \end{definition}
The following points about bounded sets in an elcs $(X,\tau)$ are easy to verify.
\begin{itemize}
	\item[(a)] Every finite set in an elcs is bounded.
	\item[(b)] If $A\subseteq B$ and $B$ is bounded, then $A$ is bounded.
	\item[(c)] A subspace of $X$ is bounded if and only if it is the zero subspace.
	\item[(d)] If  $x_n\to 0$ in $(X,\tau)$, then $\{x_n:n\in \mathbb{N}\}$ is  bounded in  $(X, \tau)$. 
    \item[(e)] If $\tau_F$ is the flc topology for $(X, \tau)$, then every bounded set in $(X, \tau)$ is bounded in $(X, \tau_F)$. 
\end{itemize}

\begin{proposition}\label{both tau and tauF may not have same bounded sets}
Suppose	$\tau_F$ is the flc topology for an elcs $(X, \tau)$. Then both $(X, \tau)$ and $(X, \tau_F)$ have the same collection of bounded sets if and only if $X=X_{fin}$, that is, $(X, \tau)$ is a locally convex space.
\end{proposition}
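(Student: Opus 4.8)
The plan is to establish the two implications separately, the substance being entirely in the forward one. The backward implication needs no work: if $X=X_{fin}$ then $(X,\tau)$ is itself a locally convex space, so the finest locally convex topology coarser than $\tau$ is $\tau$ itself, i.e.\ $\tau_F=\tau$, and the two spaces trivially have the same bounded sets. For the forward implication I would prove the contrapositive. Recalling from point (e) following Definition~\ref{bounded set in an elcs} that every $\tau$-bounded set is automatically $\tau_F$-bounded, it suffices to produce, under the assumption $X\neq X_{fin}$, a subset of $X$ that is bounded in $(X,\tau_F)$ but not in $(X,\tau)$.

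So suppose $X\neq X_{fin}$ and fix $x_0\in X\setminus X_{fin}$. By the definition of the finite subspace there is a continuous extended seminorm $\rho\in S(X,\tau)$ with $\rho(x_0)=\infty$. I would take as witness set
$$A=\Bigl\{\tfrac1n x_0 : n\in\mathbb{N}\Bigr\}\subseteq X.$$
The guiding point is that $(X,\tau_F)$, being locally convex, is a topological vector space, so scalar multiplication is jointly continuous and $\tfrac1n x_0\to 0$ in $\tau_F$; hence, by point (d) following Definition~\ref{bounded set in an elcs}, $A$ is bounded in $(X,\tau_F)$. In $(X,\tau)$ this convergence fails, and the obstruction is visible through the basic neighborhood $U=\{x\in X:\rho(x)<1\}$ of $0$, which separates the points of $A$ completely, since $\rho\bigl(\tfrac1n x_0-\tfrac1m x_0\bigr)=|\tfrac1n-\tfrac1m|\,\rho(x_0)=\infty$ for all $n\neq m$.

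It then remains to verify that $A$ is not $\tau$-bounded. If it were, there would be $r>0$ and a finite set $F\subseteq A$ with $A\subseteq F+rU$; picking $\tfrac1n x_0\in A\setminus F$ (possible since $A$ is infinite) and writing $\tfrac1n x_0=f+u$ with $f\in F$ and $\rho(u)<r$, the inclusion $F\subseteq A$ forces $f=\tfrac1m x_0$ with $m\neq n$, so that $u=\bigl(\tfrac1n-\tfrac1m\bigr)x_0$ and $\rho(u)=\infty$, contradicting $\rho(u)<r$. Together with the previous paragraph this completes the forward implication. There is no serious obstacle here; the one place that is more than a routine computation with the $\infty$-conventions of Section~2 is the claim $\tfrac1n x_0\to 0$ in $\tau_F$, which is exactly where one uses that $\tau_F$—unlike $\tau$—is locally convex, and it is precisely the failure of this convergence in $\tau$ that makes $A$ serve as the counterexample.
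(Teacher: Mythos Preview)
Your proof is correct and follows essentially the same approach as the paper: both take the witness set $A=\{\tfrac{1}{n}x_0:n\in\mathbb{N}\}$ for some $x_0\in X\setminus X_{fin}$ with $\rho(x_0)=\infty$, and show it is $\tau_F$-bounded but not $\tau$-bounded. The only minor difference is in the justification of $\tau_F$-boundedness: you argue via the convergence $\tfrac{1}{n}x_0\to 0$ in the locally convex space $(X,\tau_F)$ and invoke point~(d), whereas the paper observes directly that $|f(\tfrac{1}{n}x_0)|\le |f(x_0)|$ for every $f\in X^*$, appealing implicitly to the equivalence of boundedness and weak boundedness in a locally convex space.
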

\begin{proof}
	Suppose $x\in X\setminus X_{fin}$, that is, $\rho(x)=\infty$ for some continuous extended seminorm $\rho$ on  $(X, \tau)$. Then $A=\{\frac{x}{n} :n\in\mathbb{N} \}$ is bounded in $(X, \tau_F)$ as $|f\left( \frac{x}{n}\right)|\leq |f(x)|$ for every $f\in X^*$. But it is not bounded in $(X, \tau)$ as $A\nsubseteq F+r\rho^{-1}([0, 1])$ for any $r>0$ and finite subset $F$ of $A$.
\end{proof}

\begin{definition}\label{definition of topology of uniform topology}\normalfont
	Let $(X, \tau)$ be an elcs. Then the topology $\tau_{ucb}$ on $X^*$, of uniform convergence on bounded subsets of $(X, \tau)$ is induced by the collection $\mathcal{P}=\{\rho_B: B ~\text{is bounded subset of}~ (X, \tau)\}$ of seminorms on $X^*$, where $\rho_B(f)=\sup_{x\in B}|f(x)| ~\text{for}~ f\in X^*$.\end{definition}
     
\noindent Let $(X, \tau)$ be an elcs with the flc topology $\tau_F$. Then by the strong topology $\tau_s$ on $X^*$, we mean the topology of uniform convergence on bounded subsets of $(X, \tau_F)$. Clearly, the topology $\tau_{ucb}$ is coarser than $\tau_s$. The following points regarding $\tau_{ucb}$ are easy to verify. 
\begin{itemize}
	\item[(1)] $(X^*, \tau_{ucb})$ is a locally convex space and $\mathcal{B}=\{B^\circ: B ~\text{is bounded in}~ (X, \tau)\}$ is a neighborhood base of $0$ in $(X^*, \tau_{ucb})$.
	\item[(2)] The weak$^*$ topology $\tau_{w^*}$ is coarser than $\tau_{ucb}$.
	\item[(3)] The following fact about the topology $\tau_{ucb}$ follows from Theorem 4.11 in \cite{nwiv}. If $(X, \parallel\cdot\parallel)$ is an extended normed space with $X=X_{fin}\oplus M$, then $(X^*, \tau_{ucb})$ is isomorphic to $(X_{fin}^*, \parallel\cdot\parallel_{op})\times (M^*, \tau_{w^*})$.
\end{itemize}

 In \cite{spoens}, authors proved that if $(X, \parallel\cdot\parallel)$ is an enls, then $\tau_{ucb}$ is normable if and only if $X$ is almost conventional, that is, the dimension of $X/X_{fin}$ is finite. We study this type of results in an elcs and also refine this result for an enls. We also find conditions on an elcs $(X, \tau)$ under which the strong topology on $X^*$ is normable.
 	
\begin{theorem}\label{normability of uniform topology in elcs} Let $(X, \tau)$ be an elcs. \begin{itemize}
		\item[(1)] If $\tau_{ucb}$ is normable, then the dimension of the quotient space $X/X_{fin}^\rho$ is finite for every continuous extended seminorm $\rho$ on $(X, \tau)$.
		\item[(2)] If $\tau_{ucb}$ is metrizable, then the dimension of the quotient space $X/X_{fin}^\rho$ is countable for every continuous extended seminorm $\rho$ on $(X, \tau)$.\end{itemize}\end{theorem}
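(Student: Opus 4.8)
The plan is to fix an arbitrary continuous extended seminorm $\rho$ on $(X,\tau)$, choose a complement $X=X_{fin}^{\rho}\oplus M$, and reduce both parts to the statement that the weak$^{*}$ topology $\sigma(M^{\#},M)$ of pointwise convergence on $M$, on the algebraic dual $M^{\#}$ of $M$, is normable iff $\dim M<\infty$ and metrizable iff $\dim M$ is countable --- the same dimension count already used in Theorems~\ref{normability of weak^* topology} and \ref{metrizability of B_{X^*}}. Since $\dim(X/X_{fin}^{\rho})=\dim M$, this suffices. Let $\pi\colon X\to M$ be the projection with kernel $X_{fin}^{\rho}$. First I would observe that, because $X_{fin}^{\rho}$ is clopen, every linear functional on $X$ that vanishes on $X_{fin}^{\rho}$ has open (hence closed) kernel and is therefore continuous; so sending $g\in M^{\#}$ to its extension $\widehat g$ by $0$ on $X_{fin}^{\rho}$ is a linear injection $M^{\#}\hookrightarrow X^{*}$, with $\widehat g(x)=g(\pi x)$ for all $x$. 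Write $\widehat{M^{\#}}$ for its image.

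The crux is that bounded sets of $(X,\tau)$ have a finite shadow in $M$. The neighborhood $U=\rho^{-1}([0,1])$ of $0$ is contained in $X_{fin}^{\rho}$, so if $B$ is bounded in $(X,\tau)$ there are $r>0$ and a finite $F\subseteq B$ with $B\subseteq F+rU\subseteq F+X_{fin}^{\rho}$, whence $\pi(B)\subseteq\pi(F)$ is finite. Hence for every $g\in M^{\#}$ we have $\rho_{B}(\widehat g)=\sup_{x\in B}|g(\pi x)|=\max_{w\in\pi(B)}|g(w)|$, which is just the pointwise-evaluation seminorm of $g$ at the finite set $\pi(B)\subseteq M$; and conversely every finite $F\subseteq M$ is itself bounded in $(X,\tau)$ with $\pi(F)=F$. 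So the seminorms $\{\rho_{B}|_{\widehat{M^{\#}}}:B\text{ bounded in }(X,\tau)\}$ are exactly the pointwise-evaluation seminorms at finite subsets of $M$; that is, under the identification $\widehat{M^{\#}}\cong M^{\#}$, the subspace topology $\tau_{ucb}|_{\widehat{M^{\#}}}$ equals $\sigma(M^{\#},M)$.

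From here both parts follow. If $\tau_{ucb}$ is metrizable, then so is its subspace $(\widehat{M^{\#}},\sigma(M^{\#},M))$; taking a countable base of neighborhoods of $0$ and picking for each a finite $F_{n}\subseteq M$ whose polar in $M^{\#}$ lies inside it, and then arguing as in the proof of Theorem~\ref{metrizability of B_{X^*}} (pick $x_{0}\in M$ linearly independent of $\bigcup_{n}F_{n}$ and a $g\in M^{\#}$ vanishing on each $F_{n}$ with $g(x_{0})=2$) gives a contradiction unless $\dim M$ is countable; this proves (2). If $\tau_{ucb}$ is normable, it has a bounded neighborhood of $0$ (Theorem 6.2.1 in \cite{tvsnarici}), whose trace on $\widehat{M^{\#}}$ contains the polar (in $M^{\#}$) of some finite $F\subseteq M$, and hence contains the linear subspace of all $g\in M^{\#}$ vanishing on $\text{span}(F)$; since a bounded subspace of a Hausdorff space is $\{0\}$, we get $\text{span}(F)=M$, so $\dim M\le|F|<\infty$; this proves (1). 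I expect the main obstacle to be the finite-shadow step --- recognizing that $\rho^{-1}([0,1])\subseteq X_{fin}^{\rho}$ and that this alone forces $\tau_{ucb}$ to restrict to $\sigma(M^{\#},M)$ on the natural copy of $M^{\#}$ in $X^{*}$; after that, everything reduces to the routine algebraic-dual weak$^{*}$ dimension arguments already appearing in the paper.
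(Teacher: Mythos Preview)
Your proof is correct, and it rests on the same key observation as the paper's: if $B$ is bounded in $(X,\tau)$, then $B\subseteq F+\rho^{-1}([0,r])\subseteq F+X_{fin}^{\rho}$ for some finite $F$, so the projection $\pi(B)\subseteq M$ is finite; and every linear functional on $X$ vanishing on $X_{fin}^{\rho}$ is automatically continuous. The difference is in how this observation is deployed.

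The paper argues directly by contradiction. For (1) it picks a bounded $B$ with $B^{\circ}$ a bounded neighborhood of $0$, finds (via the finite-shadow fact) an $x_{0}\in M$ linearly independent of $B$, and then exhibits an $f\in X^{*}$ with $f|_{B}=0$ and $f(x_{0})=m+1$ contradicting $B^{\circ}\subseteq m\{x_{0}\}^{\circ}$; part (2) is the countable-base analogue. You instead package the finite-shadow observation once and for all as the identification $\tau_{ucb}|_{\widehat{M^{\#}}}=\sigma(M^{\#},M)$, and then invoke the standard dimension criteria for normability/metrizability of $\sigma(M^{\#},M)$ (the same arguments behind Theorem~\ref{normability of weak^* topology}). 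Your route is more structural and makes explicit why the result parallels Theorem~\ref{normability of weak^* topology}: the obstruction lives entirely on the algebraic dual of $M$ with its weak$^{*}$ topology. The paper's route is a bit more self-contained, avoiding the subspace-identification step, but at the cost of repeating the contradiction device twice. Both are short; yours has the conceptual advantage of isolating a reusable lemma (the subspace identification) that could serve elsewhere.
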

\begin{proof}
		(1). Suppose $\tau_{ucb}$ is normable. By Theorem 6.2.1 in \cite{tvsnarici}, p. 160, there exists a bounded set $B$ in $(X, \tau)$ such that $B^\circ$ is a bounded neighborhood of $0$ in $(X^*, \tau_{ucb})$. Suppose the dimension of $X/X_{fin}^\rho$ is infinite for some continuous extended seminorm $\rho$ on $(X, \tau)$. So there exists an infinite dimensional subspace $M$ of $X$ such that $X=X_{fin}^\rho\oplus M$. Since $B$ is bounded in $(X, \tau)$, there exists an $x_0$ in $M$ linearly independent to $B$ as $B\subseteq \rho^{-1}([0, r])+F$ for a finite set $F$ and $r>0$. Moreover  since $B^\circ$ is bounded in $(X^*, \tau_{ucb})$, there exists an $m\in \mathbb{N}$ with $B^\circ\subseteq m\{x_0\}^\circ$. Which is not possible as we can find a $f\in X^*$ with $f|_B=0$ and $f(x_0)= m+1$.
		
		(2). Let $\tau_{ucb}$ be metrizable and  let $\{B_n^\circ : n\in\mathbb{N}\}$ be a countable neighborhood base of $0$ in $(X^*, \tau_{ucb})$ such that $B_n$ is bounded in $(X, \tau)$ for every $n\in\mathbb{N}$. If the dimension of $X/X_{fin}^\rho$ is not countable for some continuous extended seminorm $\rho$ on $(X, \tau)$, then there exists a subspace $M$ of $X$ such that $X=X_{fin}^\rho\oplus M$ and the dimension of $M$ is uncountable. For each $n\in\mathbb{N}$, there exists a finite subset $F_n$ of $M$ such that $B_n\subseteq X_{fin}^\rho+ F_n$. Take $F=\cup_{n\in\mathbb{N}} F_n$. Then there exists an $x_0$ in $M$ linearly independent to $F$ as the dimension of $M$ is  uncountable. Since $\{x_0\}^\circ$ is a neighborhood of $0$ in $(X^*, \tau_{ucb})$, $B_{n_0}^\circ\subseteq \{x_0\}^\circ$ for some $n_0$. Which is not possible as we can find a $f\in X^*$ with $f|_{B_{n_0}}=0$ and $f(x_0)= 2$.  	\end{proof}
	
In general, converse of (1) and (2) are not true (for example, one can take a locally convex space whose strong topology is not metrizable). We also may not replace $X_{fin}^\rho$ by $X_{fin}$ in (1) and (2) of Theorem \ref{normability of uniform topology in elcs} (see Example \ref{example in which stong topology coincide with the uniform topology}).  
 
\begin{theorem}\label{normability of uniform topology in enls}
 Let $(X, \parallel\cdot\parallel)$ be an enls with the flc topology $\tau_{F}$. Then the following statements are equivalent.
 \begin{itemize}
 	\item[(1)] $\tau_{ucb}$ is normable;
 	\item[(2)] $\tau_{F}$ is normable;
 	\item[(3)] $\tau_{F}$ is metrizable;
 	\item[(4)] $X$ is almost conventional.\end{itemize}\end{theorem}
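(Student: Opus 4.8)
The plan is to prove $(1)\Leftrightarrow(4)$ directly and to close the cycle $(4)\Rightarrow(2)\Rightarrow(3)\Rightarrow(4)$, which makes all four statements equivalent. Writing $X=X_{fin}\oplus M$, I would first set up the two structural inputs that carry the argument: the isomorphism $(X,\tau_{F})\cong(X_{fin},\parallel\cdot\parallel)\times(M,\tau_{F}|_{M})$, which is exactly the map $\Psi$ constructed in the proof of Theorem \ref{metrizability of B_{X^*}}, and the isomorphism $(X^{*},\tau_{ucb})\cong(X_{fin}^{*},\parallel\cdot\parallel_{op})\times(M^{*},\tau_{w^{*}})$ recorded as point (3) following Definition \ref{definition of topology of uniform topology} (it comes from Theorem 4.11 in \cite{nwiv}). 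The second observation to isolate is that $\parallel\cdot\parallel$ is itself a continuous extended seminorm on $(X,\parallel\cdot\parallel)$ with $X_{fin}^{\parallel\cdot\parallel}=X_{fin}$ (this is just the definition of the finite subspace of an enls), and that $(X,\parallel\cdot\parallel)$ is a countable elcs; this lets me feed the hypotheses of $(1)$ and $(3)$ into the existing elcs-level criteria \emph{with the single distinguished seminorm} $\rho=\parallel\cdot\parallel$, so that a conclusion about $X/X_{fin}^{\rho}$ becomes a conclusion about $X/X_{fin}$, i.e.\ about almost conventionality.

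With this in hand the two "hard" implications are short. If $\tau_{ucb}$ is normable, then Theorem \ref{normability of uniform topology in elcs}(1) applied at $\rho=\parallel\cdot\parallel$ gives $\dim\big(X/X_{fin}\big)<\infty$; this is $(1)\Rightarrow(4)$. If $\tau_{F}$ is metrizable, then by the implication $(a)\Rightarrow(b)$ of Theorem 5.10 in \cite{flctopology} (used in the same way in the function-space theorems above) applied at $\rho=\parallel\cdot\parallel$, again $\dim\big(X/X_{fin}\big)<\infty$; this is $(3)\Rightarrow(4)$.

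For the converse directions, assume $X$ is almost conventional, so $\dim M<\infty$. Then $(X,\tau_{F})$ is Hausdorff — its dual $X^{*}$ separates the points of $X$ — so $(M,\tau_{F}|_{M})$ is a finite-dimensional Hausdorff topological vector space, hence its topology is the Euclidean one and in particular normable; therefore $(X,\tau_{F})\cong(X_{fin},\parallel\cdot\parallel)\times(M,\tau_{F}|_{M})$ is a product of two normable spaces, so $\tau_{F}$ is normable, which is $(4)\Rightarrow(2)$. (Alternatively one can imitate the Eberlein--\v{S}mulian argument: for a norm $\mu$ on the finite-dimensional $M$ the norm $\trinorm x\trinorm=\parallel x_{F}\parallel+\mu(x_{M})$ is a continuous norm on $(X,\parallel\cdot\parallel)$ agreeing with $\parallel\cdot\parallel$ on $X_{fin}$, so by Corollary 1 in \cite{spoens} it induces $\tau_{F}$.) For $(4)\Rightarrow(1)$, $\dim M<\infty$ forces $M^{*}$ to be finite-dimensional, so $(M^{*},\tau_{w^{*}})$ is normable (Theorem \ref{normability of weak^* topology}), and hence $(X^{*},\tau_{ucb})\cong(X_{fin}^{*},\parallel\cdot\parallel_{op})\times(M^{*},\tau_{w^{*}})$ is once more a product of two normable spaces, so $\tau_{ucb}$ is normable. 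Since $(2)\Rightarrow(3)$ is trivial, the cycle $(4)\Rightarrow(2)\Rightarrow(3)\Rightarrow(4)$ is complete.

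I do not expect a genuine obstacle: the theorem is essentially a synthesis of results already proved. The only step that needs care is the reduction to almost conventionality — for a general elcs, finiteness of $\dim(X/X_{fin}^{\rho})$ for \emph{every} continuous extended seminorm $\rho$ is a priori weaker than finiteness of $\dim(X/X_{fin})$, so it is important to invoke Theorem \ref{normability of uniform topology in elcs} and Theorem 5.10 of \cite{flctopology} at the particular seminorm $\rho=\parallel\cdot\parallel$, for which $X_{fin}^{\rho}$ is exactly $X_{fin}$ — and to quote the two product decompositions in precisely the form in which they appear in the paper.
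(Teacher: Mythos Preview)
Your proof is correct, but the route differs from the paper's. The paper proceeds almost entirely by citation: $(1)\Leftrightarrow(4)$ is Theorem 3 of \cite{spoens}, $(3)\Leftrightarrow(4)$ is Corollary 5.11 of \cite{flctopology}, $(2)\Rightarrow(3)$ is trivial, and only $(4)\Rightarrow(2)$ is argued directly---by writing down the norm $\trinorm x\trinorm=\parallel x_f\parallel+\mu(x_M)$, checking by a direct estimate that $(X,\trinorm\cdot\trinorm)^*=(X,\parallel\cdot\parallel)^*$, and then invoking Theorem 5.16 of \cite{flctopology} to conclude that $\trinorm\cdot\trinorm$ induces $\tau_F$.

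Your argument instead stays internal to the present paper. For $(1)\Rightarrow(4)$ you specialize Theorem \ref{normability of uniform topology in elcs}(1) at $\rho=\parallel\cdot\parallel$, and for $(4)\Rightarrow(1)$ and $(4)\Rightarrow(2)$ you use the two product decompositions already established here (in the proof of Theorem \ref{metrizability of B_{X^*}} and in the remarks after Definition \ref{definition of topology of uniform topology}), reducing normability of each side to normability of a product with a finite-dimensional factor. This has the advantage of making the role of ``$\dim M<\infty$'' completely transparent and of avoiding the external appeal to \cite{spoens}; the paper's version, on the other hand, gives an explicit norm inducing $\tau_F$ and pinpoints the dual-equality criterion (Theorem 5.16 of \cite{flctopology}) as the mechanism, which is informative in its own right. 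Your alternative for $(4)\Rightarrow(2)$ via $\trinorm\cdot\trinorm$ and Corollary 1 of \cite{spoens} is essentially the paper's argument with a different final citation.
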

  \begin{proof}
  	(1)$\Leftrightarrow$(4). It is given in Theorem 3 in \cite{spoens}.
  	
  	 (3)$\Leftrightarrow$(4). It follows from  Corollary 5.11 in \cite{flctopology}. (2)$\Rightarrow$ (3) is obvious.
  	 
  	 (4)$\Rightarrow$(2).  Suppose $X$ is almost conventional and $X=X_{fin}\oplus M$,  where the dimension of $M$ is finite. Consider the norm $\trinorm\cdot\trinorm:X\to[0, \infty)$ defined by $$\trinorm x\trinorm = \parallel x_f \parallel+ \mu(x_M),$$ where $x=x_f+x_M$ for $x_f\in X_{fin}$ and $x_M\in M$, and $\mu(\cdot)$ is any norm on $M$. Then $\trinorm\cdot\trinorm$ is continuous on $(X, \parallel\cdot\parallel)$ as $\trinorm x\trinorm\leq \parallel x\parallel$ for every $x\in X$.  If $f\in (X, \parallel\cdot\parallel)^*$, then $f|_{X_{fin}}\in (X_{fin}, \parallel\cdot\parallel)^*$ and $f|_M\in(M,\mu(.))^*$ as the dimension of $M$ is finite. Therefore $$|f(x=x_f+x_M)|\leq |f(x_f)|+|f(x_M)|\leq \left( \parallel f|_{X_{fin}}\parallel\right)  \parallel x_f\parallel +\parallel f|_M\parallel_\mu\mu(x_M),$$
  	 where $\parallel\cdot\parallel_\mu$ is the operator norm with respect to the norm $\mu(\cdot)$. Consequently, $|f(x)|\leq M \trinorm x \trinorm$, where $M= \max\{\parallel f|_{X_{fin}}\parallel, \parallel f|_M\parallel_\mu\}$. Therefore $f\in (X, \trinorm\cdot\trinorm)^*$. Hence $(X, \trinorm\cdot\trinorm)^*=(X,\parallel\cdot\parallel)^*$. By Theorem 5.16 in \cite{flctopology}, $\tau_{F}$ is induced by $\trinorm\cdot\trinorm$.\end{proof}
    
 \begin{theorem}
 	Let $(X, \tau)$ be a countable elcs with the flc topology $\tau_{F}$. Then 
 	\begin{itemize}
 		\item[(1)] the topology $\tau_{s}$ on $X^*$  is normable if and only if $\tau_{F}$ is normable.
 		\item[(2)] If $\tau_{ucb}$ is normable, then $\tau_{F}$ is normable.\end{itemize} \end{theorem}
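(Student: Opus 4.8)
The plan is to treat Part~(1) in full and to deduce Part~(2) from it. The implication ``$\tau_F$ normable $\Rightarrow$ $\tau_s$ normable'' in Part~(1) is immediate and requires no countability: if $q$ is a norm inducing $\tau_F$, then the $\tau_F$-bounded sets are exactly the $q$-bounded sets, so $\tau_s$ is the operator-norm topology on $X^{*}=(X,q)^{*}$, which is normable.

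For the converse in Part~(1), I would assume $\tau_s$ normable and argue in three steps. \emph{Step 1: $\dim X/X_{fin}^{\rho}$ is finite for every continuous extended seminorm $\rho$ on $(X,\tau)$.} If not, write $X=X_{fin}^{\rho}\oplus M$ with $\dim M$ infinite; since $\rho^{-1}([0,1])\subseteq X_{fin}^{\rho}$, every linear functional on $X$ vanishing on $X_{fin}^{\rho}$ is $0$ on a neighborhood of $0$ and hence lies in $X^{*}$. Normability of $\tau_s$ yields a $\tau_F$-bounded $B_0$ with $B_0^{\circ}$ a bounded neighborhood of $0$ in $(X^{*},\tau_s)$. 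As $B_0$ is $\tau_F$-bounded, i.e. weakly bounded, precomposing linear functionals on $M$ with the projection of $X$ onto $M$ and using the previous sentence forces the subspace spanned by the $M$-components of $B_0$ to be finite dimensional; then I pick $x_0\in M$ outside $X_{fin}^{\rho}$ plus that span, and $h\in X^{*}$ vanishing on that subspace with $h(x_0)=1$, so that $\{nh:n\in\mathbb{N}\}\subseteq B_0^{\circ}$. But $\{nh:n\in\mathbb{N}\}$ is not bounded in $\tau_{w^{*}}$, hence not in $\tau_s$ (as $\tau_{w^{*}}\subseteq\tau_{ucb}\subseteq\tau_s$), contradicting the $\tau_s$-boundedness of $B_0^{\circ}$. \emph{Step 2.} By Theorem~5.10 in \cite{flctopology}, $\tau_F$ is metrizable, so $(X,\tau_F)$ is a bornological locally convex space. \emph{Step 3.} Put $\widetilde{B_0}:=(B_0^{\circ})_{\circ}$. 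A routine polar computation (from $B_0^{\circ}\subseteq\mu B^{\circ}$ one gets $B\subseteq\mu\widetilde{B_0}$) shows $\widetilde{B_0}$ is an absolutely convex, $\tau_F$-bounded set that absorbs every $\tau_F$-bounded set; being bornivorous and absolutely convex in a bornological space, it is a $\tau_F$-neighborhood of $0$. Thus $(X,\tau_F)$ has a bounded neighborhood of $0$, so $\tau_F$ is normable by Theorem~6.2.1 in \cite{tvsnarici}.

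For Part~(2), assume $\tau_{ucb}$ is normable. By Theorem~\ref{normability of uniform topology in elcs}(1), $\dim X/X_{fin}^{\rho}$ is finite for every continuous extended seminorm $\rho$, so Theorem~5.10 in \cite{flctopology} again makes $\tau_F$ metrizable, hence bornological. In view of Part~(1) it now suffices to prove $\tau_{ucb}=\tau_s$: then $\tau_s$ is normable and so is $\tau_F$. Since $\tau_{ucb}\subseteq\tau_s$ always and $\tau_s$ is induced by the seminorms $\rho_B$ with $B$ ranging over $\tau_F$-bounded sets, this reduces to showing that for every $\tau_F$-bounded (equivalently weakly bounded) $B\subseteq X$ there is a $\tau$-bounded $C\subseteq X$ with $B$ absorbed by the bipolar $(C^{\circ})_{\circ}$. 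I would derive this from the finite-codimension hypothesis together with the countability of $(X,\tau)$: fix an increasing sequence $(\rho_n)$ of extended seminorms inducing $\tau$ and compatible decompositions $X=X_{fin}^{\rho_n}\oplus M_n$ with each $M_n$ finite dimensional; the argument of Step~1 above, applied with each $\rho_n$, shows the $M_n$-components of $B$ lie in a finite-dimensional, norm-bounded set, while its $X_{fin}^{\rho_n}$-components are $\rho_n$-bounded, and, using the splitting of $(X,\tau_F)$ relative to $X_{fin}$ (as in the proof of Theorem~\ref{normability of uniform topology in enls} and of Theorem~5.16 in \cite{flctopology}), these pieces can be assembled into a single $\tau$-bounded $C$ dominating $B$.

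The main obstacle is exactly this last step of Part~(2): identifying $\tau_{ucb}$ with $\tau_s$ under the hypothesis that $X/X_{fin}^{\rho}$ is finite dimensional for every $\rho$ — equivalently, showing that the weakly bounded subsets of $X$ are cofinally controlled by the $\tau_w$-closed absolutely convex hulls of $\tau$-bounded sets, despite the fact (Proposition~\ref{both tau and tauF may not have same bounded sets}) that $\tau$ and $\tau_F$ genuinely have different bounded sets unless $X=X_{fin}$. Everything else in both parts is a polar or Minkowski-functional manipulation, a direct appeal to Theorem~5.10 in \cite{flctopology}, or the standard fact that metrizable locally convex spaces are bornological. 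Should the equality $\tau_{ucb}=\tau_s$ not be available in this generality, an alternative is to rerun the three-step argument of Part~(1) verbatim with $\tau_{ucb}$ in place of $\tau_s$; the only gap is then the passage from ``$\widetilde{B_0}$ absorbs every $\tau$-bounded set'' to ``$\widetilde{B_0}$ absorbs every $\tau_F$-bounded set'' needed before invoking bornologicity, which is the same point.
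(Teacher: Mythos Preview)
Your argument for Part~(1) is correct but follows a different route from the paper's. You first establish that each $X/X_{fin}^{\rho}$ has finite dimension, invoke Theorem~5.10 of \cite{flctopology} to get metrizability (hence bornologicity) of $(X,\tau_F)$, and then observe that the bipolar $\widetilde{B_0}=(B_0^{\circ})_{\circ}$ is a bounded bornivorous set. The paper instead works directly with a countable decreasing neighborhood base $\{U_n\}$ of $0$ in $(X,\tau)$: taking $B$ closed and absolutely convex in $(X,\tau_F)$ with $B^{\circ}$ a bounded $\tau_s$-neighborhood of $0$, it shows $\tfrac{1}{n}U_n\subseteq B$ for some $n$ by a sequence contradiction (if $x_n\in U_n\setminus nB$ for every $n$, then $x_n\to 0$ in $\tau$, so $\{x_n\}$ is $\tau_F$-bounded, whence $B^{\circ}\subseteq m\{x_n\}^{\circ}$ for some $m$, and taking polars forces $x_m/m\in B$). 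Thus $B$ is already a $\tau$-neighborhood of $0$, $\mu_B$ is continuous on $(X,\tau)$ and hence on $(X,\tau_F)$, and $B$ is a bounded $\tau_F$-neighborhood of $0$. The paper's route is self-contained and avoids both Theorem~5.10 and bornologicity; yours is more structural but leans on that external result.

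Part~(2), however, has the gap you yourself flag, and it is genuine. Your bornological scheme requires $\widetilde{B_0}$ to absorb every $\tau_F$-bounded set, but $\tau_{ucb}$-boundedness of $B_0^{\circ}$ only yields absorption of $\tau$-bounded sets, and by Proposition~\ref{both tau and tauF may not have same bounded sets} the two families differ whenever $X\neq X_{fin}$. Your alternative---first proving $\tau_{ucb}=\tau_s$ under the finite-codimension hypothesis---is essentially the question the paper leaves open; the ``assembly'' of the $M_n$-pieces into a single $\tau$-bounded $C$ is not carried out and does not obviously go through. The paper sidesteps all of this by rerunning the sequence argument from Part~(1) with one change: starting from a $\tau$-bounded $B$ with $B^{\circ}$ a bounded $\tau_{ucb}$-neighborhood of $0$, set $B'=\text{Cl}_{\tau_F}(\text{ab}(B))=(B^{\circ})_{\circ}$, suppose $x_n\in U_n\setminus nB'$, and observe that $x_n\to 0$ in $(X,\tau)$ makes $\{x_n\}$ \emph{$\tau$-bounded} (point~(d) after Definition~\ref{bounded set in an elcs}). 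Then $\{x_n\}^{\circ}$ is a $\tau_{ucb}$-neighborhood of $0$, and the $\tau_{ucb}$-boundedness of $(B')^{\circ}=B^{\circ}$ gives the same contradiction. The countable $\tau$-base manufactures exactly the test sets ($\tau$-null sequences) that $\tau_{ucb}$ is sensitive to, which is why the direct argument succeeds where the bornological detour through $(X,\tau_F)$ stalls.
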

 	
 	\begin{proof} (1). It is well known that the strong topology on the dual of a normable locally convex space is normable. Conversely,  let $\tau_s$ be normable. Then there exists a bounded set $B$ in $(X, \tau_F)$ such that $B^\circ$  is a bounded neighborhood of $0$ in $(X^*, \tau_{s})$.  So $B^\circ$ is pointwise bounded. We can assume that $B$ is closed, absolutely convex in $(X, \tau_F)$ (otherwise consider Cl$_{\tau_F}(ab(B))$). By Bipolar Theorem (Theorem 8.3.8 in \cite{tvsnarici}, p. 235) and Theorem 8.8.3 in \cite{tvsnarici}, p. 251,  $((B^\circ)_\circ)$ is absorbing and $(B^\circ)_\circ= B$. 
 		
 	Suppose $\{U_n: n\in\mathbb{N}\}$ is a countable neighborhood base of $0$ in $(X, \tau)$ with $U_{n+1}\subseteq U_n \text{ for every } n\in\mathbb{N}$. We first show that $B$ is a neighborhood of $0$ in $(X, \tau)$. To show this, it is sufficient to show that $\frac{U_{n}}{n}\subseteq B$ for some $n\in\mathbb{N}$. Suppose there exist $x_n\in U_n\setminus{nB}$ for every $n\in\mathbb{N}$. Then $x_n\rightarrow 0$ in $(X, \tau)$. Consequently, $x_n\rightarrow 0$ in $(X, \tau_F)$. Therefore $\{x_n: n\in\mathbb{N}\}$ is bounded in $(X, \tau_F)$. Since $B^\circ$ is bounded in $(X^*, \tau_s)$,  $B^\circ\subseteq m (\{x_n: n\in\mathbb{N}\})^\circ$ for some $m\in\mathbb{N}$. Therefore $(m\{x_n: n\in\mathbb{N}\}^\circ)_\circ\subseteq (B^\circ)_\circ=B$. Which is not possible as $\frac{x_m}{m}\in(m\{x_n: n\in\mathbb{N}\}^\circ)_\circ\subseteq B$. Hence $\frac{U_{n}}{n}\subseteq B$ for some $n\in\mathbb{N}$. Consequently, the Minkowski functional $\mu_{B}$ for $B$ is a continuous seminorm on $(X, \tau)$. By Theorem 3.5 in \cite{flctopology}, $\mu_{B}$ is continuous on $(X, \tau_F)$. Therefore $B$ is a bounded neighborhood of $0$ in $(X, \tau_F)$. Hence $\tau_F$ is normable.
 		
 	(2). Let  $\tau_{ucb}$ be normable. Then there exists a bounded set  $B$ in $(X, \tau)$ such that $B^\circ$  is a bounded neighborhood of $0$ in $(X^*, \tau_{ucb})$. So $B^\circ$ is pointwise bounded. By Bipolar Theorem (Theorem 8.3.8 in \cite{tvsnarici}, p. 235) and Theorem 8.8.3 in \cite{tvsnarici}, p. 251,  $(B^\circ)_\circ$ is absorbing and $(B^\circ)_\circ=\text{Cl}_{\tau_F}\left(\text{ab}(B)\right)$. Let $B'= \text{Cl}_{\tau_F}\left(\text{ab}(B)\right)$. Since both absolutely convex hull and closure of a bounded set are bounded in a locally convex space, $B'$ is bounded in $(X, \tau_{F})$. Now, repeat all the steps of converse of part (1) to show that $B'$ is a neighborhood of $0$ in $(X, \tau_{F})$. Which completes the proof.\end{proof}
  
  We have two locally convex topologies $\tau_{ucb}$ and $\tau_s$ on the dual $X^*$ of an elcs $(X, \tau)$. Clearly, $\tau_{ucb}\subseteq \tau_s$. A natural question arises here when $\tau_{ucb}=\tau_{s}$?  We show that for a fundamental elcs, both $\tau_{ucb}$ and $\tau_s$ coincide. We also give conditions on an elcs under which $\tau_{ucb}=\tau_s$. However, the question that whether $\tau_{ucb}=\tau_s$ for every elcs remains open.
    
  \begin{theorem}\label{coincidence of strong topology on a fundamental elcs} Let $(X, \tau)$ be a fundamental elcs such that $X=X_{fin}\oplus M$. Then $\tau_{ucb}=\tau_s$ on $X^*$.\end{theorem}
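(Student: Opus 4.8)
The plan is to show that every bounded set in $(X,\tau_F)$ is ``essentially'' a bounded set of $(X,\tau)$, up to a finite-dimensional correction coming from $M$, and then to use the product decomposition $(X^*,\tau_{ucb})\cong(X_{fin}^*,\parallel\cdot\parallel_{op})\times(M^*,\tau_{w^*})$ together with the analogous statement for $\tau_s$. Since $\tau_{ucb}\subseteq\tau_s$ always holds, it suffices to prove $\tau_s\subseteq\tau_{ucb}$, i.e.\ that for every bounded set $B$ in $(X,\tau_F)$ the polar $B^\circ$ contains a $\tau_{ucb}$-neighborhood of $0$. First I would fix the continuous extended seminorm $\rho$ with $X_{fin}=X_{fin}^\rho$ guaranteed by fundamentality, so that $X_{fin}$ is clopen and $\tau|_{X_{fin}}$ is a genuine locally convex topology on which $\tau$ and $\tau_F$ agree locally.

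The key structural step is the following: if $B$ is bounded in $(X,\tau_F)$ and $B=B_{fin}+B_M$ under the decomposition $X=X_{fin}\oplus M$ (more precisely, write each $b\in B$ as $b_f+b_M$), then $\pi_M(B)$ is bounded in $(M,\tau_F|_M)$, hence — here is where fundamentality enters — $\pi_M(B)$ is contained in a bounded subset of a \emph{finite-dimensional} subspace of $M$. Indeed, since $X$ is fundamental, a bounded set of $(X,\tau)$ can have unbounded $\rho$-values only in finitely many independent directions; combined with point (e) before Proposition \ref{both tau and tauF may not have same bounded sets} and the proof technique of Theorem \ref{normability of uniform topology in elcs}(1), one shows $\pi_M(B)\subseteq \operatorname{span}(F)$ for a finite $F\subseteq M$, and $\pi_M(B)$ is norm-bounded there. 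Meanwhile $\pi_{X_{fin}}(B)$ is bounded in the locally convex space $(X_{fin},\parallel\cdot\parallel)$, hence bounded in $(X_{fin},\tau|_{X_{fin}})$. Then $C:=\pi_{X_{fin}}(B)\cup\pi_M(B)$ (or its absolutely convex hull) is bounded in $(X,\tau)$ itself, because absorbing finitely many directions causes no obstruction: on $X_{fin}$ we use ordinary local convexity, and the finite-dimensional piece in $M$ is handled directly. Since $B\subseteq C+C$ up to scaling, $C^\circ$ (a $\tau_{ucb}$-neighborhood of $0$) is contained in a fixed multiple of $B^\circ$.

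Concretely, the cleanest route is via the product decomposition in point (3) after Definition \ref{definition of topology of uniform topology}: $(X^*,\tau_{ucb})\cong(X_{fin}^*,\parallel\cdot\parallel_{op})\times(M^*,\tau_{w^*})$. One checks that $(X^*,\tau_s)$ admits the \emph{same} decomposition, namely $(X^*,\tau_s)\cong(X_{fin}^*,\tau_{s,fin})\times(M^*,\tau_{s,M})$, where $\tau_{s,fin}$ is the strong topology of $(X_{fin},\parallel\cdot\parallel)$ — which equals $\parallel\cdot\parallel_{op}$ since $X_{fin}$ is a normed space — and $\tau_{s,M}$ is the strong topology of $(M,\tau_F|_M)$. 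So the theorem reduces to the single claim that the strong topology of $(M,\tau_F|_M)$ equals its weak$^*$ topology on $M^*$. But for a fundamental elcs, $M$ carries (the restriction of) $\tau_F$, and by Corollary 3.11 of \cite{esaetvs} / the characterization of fundamental spaces, $M$ with $\tau_F|_M$ has the property that its bounded sets are exactly the bounded subsets of finite-dimensional subspaces; on such a space the strong and weak$^*$ topologies on the dual coincide (a bounded set lives in a finite-dimensional subspace, whose polar is a $\tau_{w^*}$-neighborhood of $0$ up to the finite-codimension quotient). Assembling the two factors gives $\tau_s\subseteq\tau_{ucb}$, and with the reverse inclusion already known, $\tau_{ucb}=\tau_s$.

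The main obstacle I anticipate is the structural lemma in the second paragraph: precisely pinning down that fundamentality forces bounded sets of $(X,\tau_F|_M)$ to be finite-dimensionally supported, and then verifying that the ``reassembled'' set $C$ is genuinely $\tau$-bounded (not merely $\tau_F$-bounded) — this is exactly the subtlety highlighted by Proposition \ref{both tau and tauF may not have same bounded sets}, so one must use fundamentality in an essential way and not just local convexity of $X_{fin}$. Everything else (the product decompositions, the passage between polars and neighborhoods, $\tau_{ucb}\subseteq\tau_s$) is routine bookkeeping with polars and Minkowski functionals.
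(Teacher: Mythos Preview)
Your overall strategy matches the paper's: project onto $X_{fin}$ and $M$, show that the $M$-component of any $\tau_F$-bounded set lives in a finite-dimensional subspace of $M$, and exploit that on finite-dimensional pieces the weak$^*$ and strong topologies agree. The paper carries this out with a net argument (showing $\sup_{x\in B}|f_\alpha(x)|\to 0$ directly) rather than via product isomorphisms, but the key structural insight---finite-dimensional support of $\pi_M(B)$, proved by constructing an unbounded functional on $M$ if infinitely many independent directions appear---is the same one you identify as the crux.

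There is, however, a concrete error in your paragraph-2 route: the set $C=\pi_{X_{fin}}(B)\cup\pi_M(B)$ is \emph{not} $\tau$-bounded in general. Take $U=\rho^{-1}([0,1])$ with $X_{fin}^\rho=X_{fin}$; every dilate $rU$ lies inside $X_{fin}$, so $\pi_M(B)\subseteq F+rU$ with finite $F\subseteq\pi_M(B)\subseteq M$ forces $\pi_M(B)=F$. But $\pi_M(B)$ is only known to be a bounded subset of a finite-dimensional subspace of $M$, which can certainly be infinite. Thus no single $\tau$-bounded set absorbs $B$, and the paper never attempts to find one: it bounds $\sup_{x\in B}|f_\alpha(x)|$ by the two pieces separately, using genuine $\tau$-boundedness of $B_1=\pi_{X_{fin}}(B)$ for one term and the finite-dimensional weak$^*$/strong coincidence for $B_2=\pi_M(B)$ on the other. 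Your paragraph-3 route is closer but also needs repair: point (3) after Definition~\ref{definition of topology of uniform topology} is stated only for an enls, and ``$X_{fin}$ is a normed space'' is false for a general fundamental elcs. Replace $\parallel\cdot\parallel_{op}$ throughout by the strong topology of the locally convex space $(X_{fin},\tau|_{X_{fin}})$, and establish the product decompositions for both $\tau_{ucb}$ and $\tau_s$ directly (this is essentially what the paper does by proving continuity of the two projections on $(X,\tau_F)$); then the argument goes through.
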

  \begin{proof}
  	Clearly, $\tau_{ucb}\subseteq \tau_s$. To show the reverse inclusion consider the projection maps $P_{X_{fin}}: (X, \tau_F)\to (X_{fin}, \tau_F|_{X_{fin}})$ and  $P_M:(X, \tau_F)\to (M, \tau_F|_M)$. We show that both $P_{X_{fin}}$ and $P_M$ are continuous. Let $\rho$ be a continuous seminorm on $(X_{fin}, \tau_F|_{X_{fin}})$. It follows from Theorem 4.1 in \cite{flctopology} that $\tau_F|_{X_{fin}}=\tau|_{X_{fin}}$. Therefore $\rho$ is continuous on $(X_{fin}, \tau|_{X_{fin}})$. Since $(X, \tau)$ is fundamental elcs, $X_{fin}$ is an open subspace of $(X, \tau)$. So the seminorm $\mu$ on $X$ defined by $\mu(x=x_f+x_M)=\rho(x_f)$ for  $x_f\in X_{fin}$ and $x_M\in M$ is continuous on $(X, \tau)$. Consequently, by Theorem 3.5 in \cite{flctopology}, $\mu$ is continuous on $(X, \tau_F)$. Note that $\rho(P_{X_{fin}}(x=x_f+x_m))=\rho(x_f)=\mu(x)$ for $x\in X$. Therefore $P_{X_{fin}}$ is continuous. Similarly, if $\lambda$ is a continuous seminorm on $(M, \tau_F|_M)$, then $\nu(x=x_f+x_M)=\lambda(x_M)$ for  $x_f\in X_{fin}$ and $x_M\in M$ is a continuous seminorm on $(X, \tau_F)$ and $\lambda(P_{M}(x=x_f+x_m))=\lambda(x_m)=\nu(x)$ for $x\in X$. Therefore $P_M$ is continuous. 
  	
  	Let $f_\alpha\to0$ in $(X^*, \tau_{ucb})$ and let $B$ be a bounded set in $(X, \tau_F)$. Then $B_1= P_{X_{fin}}(B)$  is bounded in $(X_{fin}, \tau_F|_{X_{fin}})$ and $B_2= P_M(B)$ is bounded in $(M, \tau_{F}|_M)$ with $B\subseteq B_1+B_2$. So $B_1$ is bounded in $(X_{fin}, \tau|_{X_{fin}})$ as $\tau|_{X_{fin}}=\tau_F|_{X_{fin}}$. Consequently, $B_1$ is bounded in $(X, \tau)$.  If $\{x_n:n\in\mathbb{N}\}\subseteq B_2$ is linearly independent, then there exists a linear functional $f$ on $M$ such that $f(x_n)=n$ for each $n \in \mathbb{N}$. Since  $(M, \tau|_M)$ is discrete, $f\in (M, \tau|_M)^*$. By Theorem 4.1 in \cite{flctopology}, $f\in (M, \tau_F|_M)^*$. Which is not possible as $B_2$ is bounded in $(M, \tau_F|_M)$. Therefore there exists a finite dimensional subspace $Y$ of $(M, \tau_F|_M)$ such that $B_2$ is a bounded subset of $Y$. Note that $$\sup_{x\in B}|f_\alpha(x)|\leq \sup_{x\in B_1}|f_\alpha(x)|+\sup_{x\in B_2}|f_\alpha|_Y(x)|.$$ 
  	It is easy to see that  $\sup_{x\in B_1}|f_\alpha(x)|\to 0$ and $f_\alpha(x)\to 0$ for $x\in Y$ as $f_\alpha\to 0$ in $(X^*, \tau_{ucb})$. Since weak$^*$ topology and strong topology on the dual of a finite dimensional locally convex space coincide, $\sup_{x\in A}|f_\alpha|_Y(x)|\to0$ for every bounded subset $A$ of $(Y, \tau_F|_Y)$. Consequently, $\sup_{x\in B_2}|f_\alpha|_Y(x)|\to 0$. Therefore $\sup_{x\in B}|f_\alpha(x)|\to 0$. Hence $\tau_{ucb}=\tau_s$.  \end{proof}
  
  \begin{corollary}\label{coincidence of strong topology on an enls}
  	Let $(X, \parallel\cdot\parallel)$ be an enls. Then $\tau_{ucb}= \tau_{s}$. \end{corollary}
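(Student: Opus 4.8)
The plan is to derive this corollary as an immediate special case of Theorem \ref{coincidence of strong topology on a fundamental elcs}, so the only real work is to observe that an enls always falls under the hypotheses of that theorem.

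First I would note that an enls $(X,\parallel\cdot\parallel)$ is a fundamental elcs. Indeed, the topology of $(X,\parallel\cdot\parallel)$ is by definition induced by the single extended seminorm $\parallel\cdot\parallel$ (so it is in particular a countable, even singly generated, elcs), and for this generating family we have $X_{fin}=\bigcap_{\rho\in\{\parallel\cdot\parallel\}}X_{fin}^{\rho}=X_{fin}^{\parallel\cdot\parallel}$. Thus there is a continuous extended seminorm $\rho$ (namely $\rho=\parallel\cdot\parallel$) with $X_{fin}=X_{fin}^{\rho}$, which is precisely the definition of a fundamental elcs.

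Next I would fix an algebraic complement: since $X_{fin}$ is a subspace of the vector space $X$, there is a subspace $M$ with $X=X_{fin}\oplus M$. Now $(X,\parallel\cdot\parallel)$, viewed as the elcs whose topology is induced by $\{\parallel\cdot\parallel\}$, is a fundamental elcs with $X=X_{fin}\oplus M$, so Theorem \ref{coincidence of strong topology on a fundamental elcs} applies verbatim and gives $\tau_{ucb}=\tau_s$ on $X^*$. One should only double-check that the topology $\tau_{ucb}$ of Definition \ref{definition of topology of uniform topology} and the strong topology $\tau_s$ (uniform convergence on bounded subsets of $(X,\tau_F)$) agree with the corresponding notions used for enls elsewhere in the paper — this is true because Definition \ref{bounded set in an elcs} specializes correctly and, by fact (e) preceding Proposition \ref{both tau and tauF may not have same bounded sets} together with Proposition \ref{both tau and tauF may not have same bounded sets}, the bounded sets of $(X,\parallel\cdot\parallel)$ and of $(X,\tau_F)$ are compared exactly as in the theorem's proof.

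There is essentially no obstacle here: the whole content is the recognition that "enls $\Rightarrow$ fundamental elcs," after which Theorem \ref{coincidence of strong topology on a fundamental elcs} does all the work. If one wanted a more self-contained argument, the alternative would be to replay the proof of that theorem directly for $(X,\parallel\cdot\parallel)$, using that $X_{fin}$ is clopen in an enls (fact (2) after the definition of $X_{fin}$, since $\parallel\cdot\parallel$ is a continuous extended seminorm) to get continuity of the projections $P_{X_{fin}}$ and $P_M$; but invoking the theorem is cleaner and is the approach I would present.
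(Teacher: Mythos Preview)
Your proposal is correct and is exactly the approach the paper intends: the corollary is stated without proof immediately after Theorem \ref{coincidence of strong topology on a fundamental elcs}, and the implicit argument is precisely that every enls is a fundamental elcs (since $X_{fin}=X_{fin}^{\parallel\cdot\parallel}$), so the theorem applies directly.
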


 We now give an example of a countable elcs $(X,\tau)$ which is not fundamental, and for which $\tau_{ucb}=\tau_s$. In this example, all the three locally convex topologies $\tau_{F}$, $\tau_{s}$, and $\tau_{ucb}$ are normable but $\tau$ is not induced by an extended norm.
 
 \begin{example}\label{example in which stong topology coincide with the uniform topology}
 	Let $X$ be the collection of all real sequences which converges to $0$. For $n\in\mathbb{N}$, define $\rho_n:X\rightarrow [0,\infty]$ by
 	\[\rho_n((x^m))=
 	\begin{cases}
 	\text{$\infty$,} &\quad\text{ if $x^m\neq 0$  for some $1\leq m\leq n$}\\
 	\text{$\displaystyle{\sup_{m\in\mathbb{N}}}|x^m|$,} &\quad\text{if $x^m= 0$ for every $1\leq m\leq n$.}\\
 	\end{cases}
 	\] Suppose $\tau$ is the topology on $X$ induced by the collection $\mathcal{P}=\{\rho_n:n\in\mathbb{N} \}$ of extended norms. Then $(X,\tau)$ is an elcs and $X_{fin}=\{0\}$. By Example 5.12 in \cite{flctopology}, the flc topology $\tau_F$ for $(X, \tau)$ coincides with the supremum norm topology $\tau_{ \parallel\cdot\parallel_{\infty}}$. Let $B= \{(x^m)\in X : x^m\in \{-1, 0, 1\} \text{ for every } m\in\mathbb{N} \}$. Then $B$ is bounded in $(X, \tau)$. We show that $\text{Cl}_{\tau_F}(\text{ab}(B))= B_{\parallel\cdot\parallel_{\infty}}$, where $B_{\parallel\cdot\parallel_{\infty}}$ is the closed unit ball in $(X, \parallel\cdot\parallel_{\infty})$. Clearly, $\text{Cl}_{\tau_F}(\text{ab}(B))\subseteq B_{\parallel\cdot\parallel_{\infty}}$.  For the reverse inclusion, let $z=(z^m)\in B_{\parallel\cdot\parallel_{\infty}}$. Then $|z^m|\leq 1$ for every $m\in \mathbb{N}$.  We show that $(z^1, z^2, ..., z^m, 0, 0,...)\in \text{ab}(B)$ for every $m\in\mathbb{N}$. Note that if $(x^m)\in B$,  then $$(|z^1|, x^2, x^3 ...)= |z^1|(1, x^2, x^3,...)+(1-|z^1|)(0, x^2, x^3,...).$$ So $(|z^1|, x^2, x^3 ...)\in \text{ab}(B)$. Since $B=-B$ and $\text{ab}(B)$ is absolutely convex,  $$\left\lbrace (\pm z^1, x^2, x^3, ...) ~|~ x^{k}\in \{-1, 0, 1\} \text{ for } k\geq 2\right\rbrace \subseteq\text{ab}(B).$$  Similarly, if  $$\left\lbrace (\pm z^1, \pm z^2, ..., \pm z^{n-1}, x^n, x^{n+1}, x^{n+2},  ...)~ |~ x^{k}\in \{-1, 0, 1\} ~\text{for}~ k\geq n\right\rbrace \subseteq\text{ab}(B)$$ for some $n\in\mathbb{N}$, then  $$\left\lbrace (\pm z^1, \pm z^2, ..., \pm z^{n}, x^{n+1}, x^{n+2},  ...)~ |~ x^{k}\in \{-1, 0, 1\} ~\text{for}~ k\geq n+1\right\rbrace \subseteq\text{ab}(B).$$ Therefore by induction, $$\left\lbrace (\pm z^1, \pm z^2, ..., \pm z^{m}, x^{m+1}, x^{m+2}, ...)~ |~ x^{k}\in \{-1, 0, 1\} \text{ for } k\geq m+1\right\rbrace \subseteq\text{ab}(B)$$  for every $m\in \mathbb{N}$. So $P_m= (z^1, z^2, ..., z^m, 0, 0, 0, ...)\in \text{ab}(B)$ for every $m\in\mathbb{N}$. Since $P_m\to z$ in $(X, \parallel\cdot\parallel_{\infty})$,  $z\in \text{Cl}_{\tau_F}(\text{ab}(B))$. Consequently, $ B_{\parallel\cdot\parallel_{\infty}}\subseteq\text{Cl}_{\tau_F}(\text{ab}(B))$.
 	
 	 Now, let $B_{\parallel\cdot\parallel_{op}}$ be the closed unit ball in $(X^*, \parallel\cdot\parallel_{op})$, where $\parallel\cdot\parallel_{op}$ is the operator norm on the dual of normed space $(X, \parallel\cdot\parallel_{\infty})$. Then $(B)^\circ=\left( \text{Cl}_{\tau_F}(\text{ab}(B)) \right)^\circ=(B_{\parallel\cdot\parallel_{\infty}})^\circ= B_{\parallel\cdot\parallel_{op}}$. So $B_{\parallel\cdot\parallel_{op}}$ is a neighborhood of $0$ in $(X^*, \tau_{ucb})$. Consequently,  $\tau_{\parallel\cdot\parallel_{op}}\subseteq \tau_{ucb}$. Hence $\tau_{ucb}=\tau_{\parallel\cdot\parallel_{op}}$.     \end{example}
 
 The technique we used in the Example \ref{example in which stong topology coincide with the uniform topology} can be generalized to the arbitrary elcs as given in next theorem. 

\begin{theorem}\label{coincidence of strong topology on an elcs}
	Let $(X, \tau)$ be an elcs. Then 
	\begin{itemize}
		\item[(1)] $\tau_{ucb}=\tau_s$ if and only if for every bounded set $B$ in $(X, \tau_F)$, there exists a bounded set $B'$ in $(X, \tau)$ such that $\text{Cl}_{\tau_F}(\text{ab}(B))=\text{Cl}_{\tau_F}(\text{ab}(B'))$.
		\item[(2)] If $(X^*, \tau_{ucb})$ is barreled, then $\tau_{ucb}=\tau_s$.\end{itemize} \end{theorem}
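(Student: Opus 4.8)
The plan is to establish (1) by directly comparing neighborhood bases of $0$ in $(X^*,\tau_{ucb})$ and $(X^*,\tau_s)$, and then to deduce (2) from (1) together with the fact that $(X^*,\tau_{ucb})$ is always a locally convex space contained in $(X^*,\tau_s)$, both having the same dual-separating structure.

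For (1), first recall that $\{B^\circ : B\ \text{bounded in}\ (X,\tau)\}$ is a neighborhood base of $0$ for $\tau_{ucb}$ and $\{C^\circ : C\ \text{bounded in}\ (X,\tau_F)\}$ is a neighborhood base of $0$ for $\tau_s$; since $\tau_{ucb}\subseteq\tau_s$ always holds, I only need the reverse inclusion. The key observation is that for any subset $A\subseteq X$ one has $A^\circ = (\text{ab}(A))^\circ = (\text{Cl}_{\tau_F}(\text{ab}(A)))^\circ$, because polars are insensitive to passing to the absolutely convex hull and to $\tau_F$-closure (the functionals in $X^*$ are $\tau_F$-continuous, as $(X,\tau)^*=(X,\tau_F)^*$). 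Hence, given a bounded set $B$ in $(X,\tau_F)$, if there is a bounded set $B'$ in $(X,\tau)$ with $\text{Cl}_{\tau_F}(\text{ab}(B))=\text{Cl}_{\tau_F}(\text{ab}(B'))$, then $B^\circ = (B')^\circ$, which is a $\tau_{ucb}$-neighborhood of $0$; this shows $\tau_s\subseteq\tau_{ucb}$, giving equality. For the converse direction of (1), assume $\tau_{ucb}=\tau_s$. Then, given a bounded set $B$ in $(X,\tau_F)$, the set $\text{Cl}_{\tau_F}(\text{ab}(B))$ is still $\tau_F$-bounded (absolutely convex hull and $\tau_F$-closure of a bounded set are bounded in a locally convex space), and $(\text{Cl}_{\tau_F}(\text{ab}(B)))^\circ = B^\circ$ is a $\tau_s = \tau_{ucb}$-neighborhood of $0$, so it contains some $(B')^\circ$ with $B'$ bounded in $(X,\tau)$. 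Taking bipolars in $(X,\tau_F)$ and invoking the Bipolar Theorem (the $\tau_F$-closed absolutely convex hull equals the bipolar), I get $\text{Cl}_{\tau_F}(\text{ab}(B'))=(B'^\circ)_\circ\supseteq(B^\circ)_\circ=\text{Cl}_{\tau_F}(\text{ab}(B))$; intersecting with a further bounded set if needed (or noting $B'$ may be replaced by $B'\cap\text{Cl}_{\tau_F}(\text{ab}(B))$, still bounded in $(X,\tau)$ by property (b) of bounded sets) yields the reverse inclusion and hence the required equality of $\tau_F$-closed absolutely convex hulls.

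For (2), suppose $(X^*,\tau_{ucb})$ is barrelled. Every $\tau_s$-neighborhood of $0$ contains a set of the form $C^\circ$ with $C$ bounded in $(X,\tau_F)$; this set is absolutely convex and absorbing in $X^*$ (it absorbs points because $C$ is $\tau_F$-bounded, hence $f(C)$ is bounded for each $f\in X^*$), and it is $\tau_{w^*}$-closed, hence $\tau_{ucb}$-closed since $\tau_{w^*}\subseteq\tau_{ucb}$. Thus $C^\circ$ is a barrel in $(X^*,\tau_{ucb})$, and by the barrelledness hypothesis it is a $\tau_{ucb}$-neighborhood of $0$. Therefore every $\tau_s$-neighborhood of $0$ is a $\tau_{ucb}$-neighborhood of $0$, giving $\tau_s\subseteq\tau_{ucb}$ and hence $\tau_{ucb}=\tau_s$.

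I expect the main obstacle to be the converse implication in (1): carefully producing, from a $\tau_{ucb}$-neighborhood inclusion $(B')^\circ\subseteq B^\circ$, the matching equality of $\tau_F$-closed absolutely convex hulls rather than merely one inclusion. The clean way is to apply the Bipolar Theorem twice and to exploit that polars only depend on the $\tau_F$-closed absolutely convex hull, so that $(B')^\circ\subseteq B^\circ$ forces $\text{Cl}_{\tau_F}(\text{ab}(B))\subseteq\text{Cl}_{\tau_F}(\text{ab}(B'))$; one then replaces $B'$ by $B'\cap\text{Cl}_{\tau_F}(\text{ab}(B))$, which remains bounded in $(X,\tau)$ and whose $\tau_F$-closed absolutely convex hull is now squeezed between the two, yielding equality.
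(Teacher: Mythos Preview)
Your plan follows the paper's approach closely: for the sufficiency direction of (1) and for (2) your arguments are correct and essentially identical to the paper's.

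The gap is in the necessity direction of (1). From $\tau_{ucb}=\tau_s$ and a $\tau_F$-bounded $B$ you correctly obtain a $\tau$-bounded $B'$ with $(B')^\circ\subseteq B^\circ$, equivalently $\text{Cl}_{\tau_F}(\text{ab}(B))\subseteq\text{Cl}_{\tau_F}(\text{ab}(B'))$. But your proposed replacement $B''=B'\cap\text{Cl}_{\tau_F}(\text{ab}(B))$ only forces $\text{Cl}_{\tau_F}(\text{ab}(B''))\subseteq\text{Cl}_{\tau_F}(\text{ab}(B))$; it does \emph{not} yield the reverse inclusion, because nothing prevents $B'$ from lying entirely outside $\text{Cl}_{\tau_F}(\text{ab}(B))$. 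Concretely, in $X=\mathbb{R}$ (so $\tau=\tau_F$) with $B=\{1\}$ one may take $B'=\{2\}$: then $(B')^\circ=[-\tfrac12,\tfrac12]\subseteq[-1,1]=B^\circ$, yet $B''=\emptyset$ and $\text{Cl}_{\tau_F}(\text{ab}(\emptyset))=\{0\}\neq[-1,1]$. The paper's own argument at this very step sets $B'=B_1\cap B$ and invokes $(B_1\cap B)^\circ=(B_1)^\circ\cup B^\circ$, which is not a valid polar identity (the same example with $B_1=\{2\}$, $B=\{1\}$ gives $(B_1\cap B)^\circ=\emptyset^\circ=X^*\neq B^\circ$). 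So neither your construction nor the paper's, as written, bridges the step from $(B')^\circ\subseteq B^\circ$ to the required equality $(B')^\circ=B^\circ$; that passage needs a genuinely different choice of $B'$.
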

\begin{proof}
	(1). Let $\tau_{ucb}=\tau_s$ and let $B$ be a bounded set in $(X, \tau_F)$. Since $B^\circ$ is a neighborhood of $0$ in $(X^*, \tau_s)$, there exists a bounded set $B_1$ in $(X, \tau)$ such that $(B_1)^\circ\subseteq (B)^\circ$. Take $B'= B_1\cap B$. Then $B'$ is bounded in $(X, \tau)$ and $(B')^\circ=(B_1\cap B)^\circ=(B_1)^\circ\cup (B)^\circ= B^\circ$. By Bipolar Theorem, $\text{Cl}_{\tau_F}(\text{ab}(B'))=\text{Cl}_{\tau_F}(\text{ab}(B))$. Conversely, if $B$ is a bounded set in $(X, \tau_F)$, then $\text{Cl}_{\tau_F}(\text{ab}(B'))=\text{Cl}_{\tau_F}(\text{ab}(B))$ for some bounded set $B'$ in $(X, \tau)$. So $(B)^\circ=(B')^\circ$. Therefore $(B)^\circ$ is a neighborhood of $0$ in $(X^*, \tau_{ucb})$. Hence $\tau_{ucb}=\tau_s$. 
	
   (2). If $B$ is a bounded set in $(X, \tau_F)$, then $B^\circ$ is absolutely convex and absorbing subset of $X^*$. Since $B^\circ$ is weak$^*$ closed, $B^\circ$ is a barrel in $(X^*, \tau_{ucb})$. So  $B^\circ$ is a neighborhood of $0$ in $(X^*, \tau_{ucb})$. Hence $\tau_{ucb}=\tau_{s}$. \end{proof}

\bibliographystyle{plain}
\bibliography{reference_file}		

\def\cprime{$'$} \def\cprime{$'$} \def\cprime{$'$}
\begin{thebibliography}{10}

\bibitem{tsoervms}
G.~Beer.
\newblock The structure of extended real-valued metric spaces.
\newblock {\em Set-Valued and Variational Analysis}, 21(4):591--602, 2013.

\bibitem{nwiv}
G.~Beer.
\newblock Norms with infinite values.
\newblock {\em Journal of Convex Analysis}, 22(1):37--60, 2015.

\bibitem{Suc}
G.~Beer and S.~Levi.
\newblock Strong uniform continuity.
\newblock {\em Journal of Mathematical Analysis and Applications},
  350(2):568--589, 2009.

\bibitem{Ucucas}
G.~Beer and S.~Levi.
\newblock Uniform continuity, uniform convergence, and shields.
\newblock {\em Set-Valued and Variational Analysis}, 18(3-4):251--275, 2010.

\bibitem{socsiens}
G.~Beer and J.~Vanderwerff.
\newblock Separation of convex sets in extended normed spaces.
\newblock {\em Journal of the Australian Mathematical Society}, 99(2):145--165,
  2015.

\bibitem{spoens}
G.~Beer and J.~Vanderwerff.
\newblock Structural properties of extended normed spaces.
\newblock {\em Set-Valued and Variational Analysis}, 23(4):613--630, 2015.

\bibitem{faaidg}
M.~Fabian, P.~Habala, P.~H{\'a}jek, V.~M. Santaluc{\'\i}a, J.~Pelant, and
  V.~Zizler.
\newblock {\em Functional analysis and infinite-dimensional geometry}.
\newblock Springer, 2001.

\bibitem{flctopology}
A.~Kumar and V.~Jindal.
\newblock The finest locally convex topology of an extended locally convex
  space.
\newblock {\em Topology and its Applications},
  doi.org/10.1016/j.topol.2022.108396, 2023.

\bibitem{MN}
R.~A. McCoy and I.~Ntantu.
\newblock {\em Topological properties of spaces of continuous functions}.
\newblock Lecture notes in mathematics, vol. 1315, Springer-Verlag, Berlin,
  1988.

\bibitem{Atgcfwt}
S.~Morris.
\newblock A topological group characterization of those locally convex spaces
  having their weak topology.
\newblock {\em Mathematische Annalen}, 195(2):330--331, 1971.

\bibitem{tvsnarici}
L.~Narici and E.~Beckenstein.
\newblock {\em Topological vector spaces}.
\newblock Second Edition, CRC Press, 2011.

\bibitem{lcsosborne}
M.~S. Osborne.
\newblock {\em Locally convex spaces}.
\newblock Springer, 2014.

\bibitem{esaetvs}
D.~Salas and S.~Tapia-Garc{\'\i}a.
\newblock Extended seminorms and extended topological vector spaces.
\newblock {\em Topology and its Applications}, 210:317--354, 2016.

\bibitem{tvsschaefer}
H.~H. Schaefer and M.~P. Wolff.
\newblock {\em Topological vector spaces}.
\newblock Second Edition, Springer-Verlag, New York, 1999.

\bibitem{willard}
S.~Willard.
\newblock {\em General topology}.
\newblock Addison-Wesley Publishing Co., Reading, Mass.-London-Don Mills, Ont.,
  1970.

\end{thebibliography}

\end{document}